\theoremstyle{plain}
\newtheorem{theorem}{Theorem}[section]
\newtheorem{lemma}[theorem]{Lemma}
\newtheorem{corollary}[theorem]{Corollary}
\newtheorem{proposition}[theorem]{Proposition}
\newtheorem{claim}[theorem]{Claim}
\newtheorem{problem}[theorem]{Problem}
\theoremstyle{definition}
\newtheorem{definition}[theorem]{Definition}
\theoremstyle{remark}
\newtheorem{remark}[theorem]{Remark}
\newtheorem*{remark*}{Remark}
\newcommand{\RR}{\mathbb{R}}
\newcommand{\ZZ}{{\mathbb Z}}
\newcommand{\TT}{\mathbb{T}}
\newcommand{\hcal}{\mathcal{H}}
\newcommand{\lcal}{\mathcal{L}}
\newcommand{\scal}{\mathcal{S}}
\newcommand{\ucal}{\mathcal{U}}
\newcommand{\zcal}{\mathcal{Z}}
\newcommand{\vol}{{\operatorname{Vol}}}
\def\Im{{\operatorname{Im}}}
\def\Re{{\operatorname{Re}}}
\def\Diff{{\operatorname{Diff}}}
\def\id{{\operatorname{id}}}
\def\path{\operatorname{Path}}
\title{Moduli of special Lagrangians with boundary, II: Lagrangian Flux and Affine Structures}
\author{Vasanth Pidaparthy\thanks{The author would like to thank Y. A. Rubinstein for suggesting this problem and for his constant encouragement and support. Research supported in part by NSF grants DMS-1906370, 2204347, and BSF grants 2016173, 2020329.}}
\date{\today}
\begin{document}

\maketitle

\begin{abstract}
    This article continues the study of moduli spaces of special Lagrangians with boundary in a Calabi--Yau manifold. The moduli space was shown to be a smooth finite-dimensional manifold in the prequel \cite{vasanth-hitchin-1}. This article investigates geometric structures on the moduli space of special Lagrangians with boundary and constructs a pair of special affine structures and a Hessian metric on this moduli space.
\end{abstract}

\section{Introduction}

This is the second of two articles that studies the moduli space of special Lagrangians with boundary (SLb) in a Calabi--Yau manifold $(X, \omega, J, \Omega)$, whose boundary is constrained along a given union of Lagrangian submanifolds (Definition~\ref{def:HWB-boundary-conditions-on-immersions-and-lagrangians}) \[\Lambda_1,\dots, \Lambda_d \subset X.\]
Building on our previous results on the unobstructedness of deformations \cite{vasanth-hitchin-1}, the goal of this article is to investigate affine structures on the moduli space of special Lagrangians with boundary.

In the prequel \cite{vasanth-hitchin-1}, we combined and generalized the works of McLean and Solomon--Yuval to observe that the infinitesimal generators of paths of any compact immersed special Lagrangian with boundary satisfying the boundary conditions of Solomon--Yuval (Definition~\ref{def:HWB-boundary-conditions-on-immersions-and-lagrangians}) are harmonic 1-forms vanishing on the boundary of the special Lagrangian. Consequently the moduli space of compact immersed special Lagrangians with boundary is a finite-dimensional manifold.

\begin{theorem}
\label{thm:HWB-tangentspacetospeciallagrangianswithboundary-2}
    {\rm\cite[Theorem~2.2]{vasanth-hitchin-1}} Under the assumptions of Proposition~\ref{prop:HWB-tangentspacetospeciallagrangianswithboundary-1}, the moduli space $\scal \lcal(X,L; \Lambda_1,\dots, \Lambda_d)$ is a finite-dimensional manifold whose tangent space at an immersed special Lagrangian is isomorphic to the space of closed and co-closed 1-forms on $L$ vanishing at the boundary. In particular $\scal \lcal$ is a manifold with $\dim \scal\lcal = \dim H^1(L,\partial L; \RR) = \dim H^{n-1}(L; \RR)$.
\end{theorem}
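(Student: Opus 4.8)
The plan is to adapt McLean's deformation theory to the boundary setting, using the Solomon--Yuval boundary conditions to control the boundary contributions, and to promote the infinitesimal description of Proposition~\ref{prop:HWB-tangentspacetospeciallagrangianswithboundary-1} to an honest chart via the implicit function theorem. First I would fix a Lagrangian neighborhood adapted to the boundary: by a Weinstein-type normalization compatible with the $\Lambda_j$, identify a neighborhood of the immersed $L$ in $X$ with a neighborhood of the zero section of $T^*L$ so that $\omega$ becomes the canonical symplectic form and each $\Lambda_j$ is placed in a standard conormal model along $\partial L$. A $C^{1,\beta}$ $1$-form $\alpha$ on $L$ of small norm then has a graph $L_\alpha$, which is Lagrangian exactly when $d\alpha = 0$, while the constraint $\partial L_\alpha \subset \bigcup_j \Lambda_j$ becomes the Dirichlet-type condition that the pullback $i^*\alpha$ to $\partial L$ vanishes, where $i\colon \partial L \hookrightarrow L$. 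Among such $\alpha$, being special Lagrangian is the single equation $G(\alpha) := \phi_\alpha^*\,\Im(e^{-i\theta}\Omega) = 0$, an $n$-form on $L$.

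Next I would linearize at $\alpha = 0$. McLean's computation shows that for closed $\dot\alpha$ one has $dG_0(\dot\alpha) = c\,(d^*\dot\alpha)\,\mathrm{vol}_L$ for a nonzero constant $c$, so the linearized special Lagrangian operator on closed $1$-forms with $i^*\dot\alpha = 0$ is the codifferential $d^*$; its kernel is precisely the space of closed and co-closed $1$-forms with vanishing tangential part, which is the harmonic space with relative (Dirichlet) boundary data furnished by Proposition~\ref{prop:HWB-tangentspacetospeciallagrangianswithboundary-1}.

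The crux is unobstructedness. As in McLean's closed case, the key structural fact is that $G(\alpha)$ always lies in the space of exact $n$-forms: since $\Im(e^{-i\theta}\Omega)$ is closed, the integral $\int_{L_\alpha}\Im(e^{-i\theta}\Omega)$ is invariant under the deformation up to a boundary flux through $\bigcup_j\Lambda_j$, and the Solomon--Yuval boundary conditions are exactly what forces this boundary flux to vanish; hence the integral stays equal to its value $0$ on $L$, so $G(\alpha)$ is exact. It then suffices for $dG_0$ to surject onto exact $n$-forms, which it does: given an exact target, corresponding under the Hodge star to a function $h$, one solves the Dirichlet problem $\Delta u = h$ on $L$ with $u|_{\partial L} = 0$ and takes $\dot\alpha = du$, which is closed, satisfies $i^*\dot\alpha = 0$, and has $d^*\dot\alpha = h$. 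The main obstacle is to carry this out rigorously as an elliptic boundary value problem on Hölder spaces: verifying that the Dirichlet-type boundary conditions are elliptic in the Lopatinski--Shapiro sense, that the linearized operator is Fredholm with the Hodge--Friedrichs decomposition available, and that the nonlinear flux term genuinely vanishes under the Solomon--Yuval conditions — this is the technical heart of the theorem.

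Finally, the implicit function theorem identifies a neighborhood of $[L]$ in $\scal\lcal$ with $\ker dG_0 = \mathcal{H}^1_D(L)$, the harmonic $1$-forms with $i^*\alpha = 0$, proving that $\scal\lcal$ is a smooth finite-dimensional manifold with $T_{[L]}\scal\lcal \cong \mathcal{H}^1_D(L)$. By Hodge theory for manifolds with boundary this space is isomorphic to $H^1(L, \partial L; \RR)$, and Poincar\'e--Lefschetz duality together with universal coefficients over $\RR$ gives $H^1(L, \partial L;\RR) \cong H_{n-1}(L;\RR) \cong H^{n-1}(L;\RR)$, which completes the dimension count.
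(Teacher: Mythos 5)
The paper does not actually prove this theorem here: it is quoted from the prequel \cite[Theorem~2.2]{vasanth-hitchin-1}, so there is no in-paper argument to compare against. That said, your strategy --- a Weinstein-type neighbourhood in which the $\Lambda_j$ become conormal models along the $C_j$, so that nearby Lagrangians satisfying Definition~\ref{def:HWB-boundary-conditions-on-immersions-and-lagrangians} are graphs of closed $1$-forms with vanishing tangential boundary part, followed by McLean's linearization of $\alpha\mapsto\phi_\alpha^*\Im(\Omega)$ to $\pm(d^*\dot\alpha)\,d\vol$ and the implicit function theorem --- is precisely the McLean/Solomon--Yuval route the introduction attributes to the prequel, and the kernel you identify is the space $Z^1_D(L)\cap cZ^1(L)$ of harmonic fields with Dirichlet boundary conditions appearing in Corollary~\ref{corollary:HWB-lifted-local-coordinate-chart} and Lemma~\ref{prop:HWB-tangentspacetospeciallagrangianswithboundary-1}.

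Your unobstructedness step, however, contains a genuine error. You claim $G(\alpha)=\phi_\alpha^*\Im(\Omega)$ is exact because the flux of $\Im(\Omega)$ through the cylinder swept out inside $\bigcup_j\Lambda_j$ is killed by the Solomon--Yuval boundary conditions. The $\Lambda_j$ are only assumed Lagrangian, not special Lagrangian, so $\Im(\Omega)$ does not restrict to zero on $\Lambda_j$ and that flux has no reason to vanish; the ``vanishes on a Lagrangian'' mechanism applies to $\omega$ (as in Lemma~\ref{lemma:HWB-formula-for-derivative-of-path-of-Lagrangians} and Claim~\ref{claim:HWB-rel-flux-well-defined-main-claim}), not to $\Im(\Omega)$ (in Claim~\ref{claim:HWB-dual-flux-well-defined-main-claim} the boundary terms that vanish are the ones lying in the \emph{special} Lagrangians $\zcal_0,\zcal_1$). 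Fortunately the step is superfluous: for compact $L$ with nonempty boundary every $n$-form is exact since $H^n(L;\RR)=0$, and in any case your own solution of the Dirichlet problem $\Delta u=h$, $u|_{\partial L}=0$ already shows that $dG_0$ surjects onto \emph{all} $n$-forms of the relevant regularity, with no cohomological restriction on the target --- this is exactly where the boundary case is easier than McLean's closed case. Deleting the flux argument repairs the proof. Two further points are glossed over and do need attention: the existence of a single Weinstein neighbourhood of the \emph{immersion} $f_0$ in which every $\Lambda_j$ is simultaneously put into conormal form along $C_j$ is itself a lemma, and the passage from the Banach-manifold zero set of $G$ to the moduli space of immersed submanifolds modulo $\Diff(L)$ requires the freeness hypothesis of Definition~\ref{def:immersionsandimmersedforms} to eliminate the reparametrization ambiguity.
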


Following McLean, Hitchin studied properties of the finite-dimensional moduli space of closed special Lagrangian submanifolds of a Calabi--Yau \cite{hitchin1997moduli} and constructed a pair of special affine structures on the moduli space \cite[pp.~507--508]{hitchin1997moduli}. A special affine structure on a manifold is a family of charts whose transition maps are volume preserving affine maps. The affine structures were applied to study the Strominger--Yau--Zaslow (SYZ) conjecture, and to prove the SYZ conjecture on the standard flat complex torus, $X = \TT^n +\sqrt{-1} \TT^n$ \cite{SYZ-mirror-Tduality}.

\begin{problem}
\label{problem:HWB-3.2}
    Is the moduli space of special Lagrangians with boundary a special affine manifold?
\end{problem}

Hitchin also proved that the natural $L^2$ metric on the moduli space is a hessian metric. Special affine manifolds with Hessian metrics admit well defined real Monge--Amp\`ere operators. SYZ conjecture predicts that the geometry of the limiting affine manifold, in the large complex structure limit of a Calabi--Yau manifold, should be determined by a real Monge--Amp\`ere equation for the Hessian metric on the moduli space of special Lagrangian torii \cite{SYZ-mirror-Tduality}. The reader is referred to Hitchin \cite[Proposition 3]{hitchin1997moduli}, and surveys by Li \cite{YangLi_2022-syz-conjecture} and Hultgren \cite{hultgren2023dualityhessianmanifoldsoptimal-MR4853198}.

\begin{problem}
\label{problem:HWB-3.3}
    Is the $L^2$ metric on the moduli space of special Lagrangians with boundary a Hessian metric? 
\end{problem}

\section{Results}
\label{section:HWB-results}
The primary goal of this article is to resolve Problem~\ref{problem:HWB-3.2} and Problem~\ref{problem:HWB-3.3} for moduli spaces of compact immersed SLb satisfying the Lagrangian boundary conditions introduced by Solomon--Yuval (Definition~\ref{def:HWB-boundary-conditions-on-immersions-and-lagrangians}). Inspired by Solomon--Yuval \cite{SolomonYuval2020-geodesics}, the problems are studied in general for \textit{immersed} SLb. Our construction of Hitchin's affine structures proceeds via a pair of generalized Lagrangian flux functionals \eqref{eq:HWB-intro-rel-flux} and \eqref{eq:HWB-intro-dual-flux} \cite[{\S}5.3]{joyce-MR2167283}. The flux functionals are expected to inform future studies of moduli spaces of special Lagrangians.

This section defines the generalized Lagrangian flux functionals \eqref{eq:HWB-intro-rel-flux} and \eqref{eq:HWB-intro-dual-flux}. Hitchin's affine structures are constructed in Theorem~\ref{thm:HWB-affine-structure-on-moduli-space}. Hitchin's construction of the affine structures proceeded by studying period matrices on the tangent space to the moduli space \cite[pp.~507--508]{hitchin1997moduli}. While a similar approach can be extended to the case with boundary, we opt for a construction of the same affine structures in terms of the Lagrangian flux functional and its generalizations \eqref{eq:HWB-intro-rel-flux} and \eqref{eq:HWB-intro-dual-flux}. The flux functional gives Hitchin's affine structures a geometric interpretation and can be found in Joyce \cite[{\S}5.3]{joyce-MR2167283}.

Consider a smooth path of special Lagrangian immersions with boundary conditions $\Lambda_1,\dots, \Lambda_d$ (Definition~\ref{def:HWB-boundary-conditions-on-immersions-and-lagrangians}),
\[f: [0,1]\times L \rightarrow X,  \qquad f_t := f(t,\cdot). \]
The space of smooth special Lagrangian paths admits two natural Lagrangian flux functionals taking values respectively in $H^1(L, \partial L; \RR)$ and $H^{n-1}(L;\RR)$---the relative Lagrangian flux \eqref{eq:HWB-intro-rel-flux} and the special Lagrangian flux \eqref{eq:HWB-intro-dual-flux},

\begin{align}
    RF\left( f \right) :=&\ \left[ \int_0^1 f_t^* i_{\frac{df_t}{dt}}  \omega dt \right] \in H^1(L, \partial L; \RR), \label{eq:HWB-intro-rel-flux}\\
    SF\left( f \right) :=&\  \left[ \int_0^1 f_t^* i_{\frac{df_t}{dt}}  \Im(\Omega) dt\right] \in H^{n-1}(L;\RR) \label{eq:HWB-intro-dual-flux}.
\end{align}

The relative Lagrangian flux \eqref{eq:HWB-intro-rel-flux} is defined along any path of Lagrangians and is a closed form since $\omega$ vanishes identically along any such path (Lemma~\ref{lemma:HWB-formula-for-derivative-of-path-of-Lagrangians}). Similarly the special Lagrangian flux \eqref{eq:HWB-intro-dual-flux} is defined here for paths of special Lagrangian immersions, but is more generally well defined and a closed form along any path of immersions $f_t:L \rightarrow X$ satisfying $f_t^*\Im(\Omega) \equiv 0$ (Lemma~\ref{lemma:HWB-phit-is-closed}). A path of special Lagrangians enjoys having both functionals defined! Furthermore by Proposition~\ref{prop:HWB-tangentspacetospeciallagrangianswithboundary-1}, the integrands of $RF$ and $SF$ are Hodge duals to each other making $RF$ and $SF$ Poincar\'e--Lefschetz dual to each other.

Calabi first introduced the Lagrangian flux functional \eqref{eq:HWB-intro-rel-flux} for closed manifolds, associating an element of the first cohomology to each path of closed embedded Lagrangians \cite[(3.11)]{Calabi-grp-aut-sym-MR350776}, and has since played an important role in the study of moduli spaces of Lagrangians \cite{Fukaya-homological-Mirror-symmetry-MR1894935, Solomon2013TheCH}. Solomon--Yuval used the Lagrangian boundary conditions on paths of immersed Lagrangians (Definition \ref{def:HWB-boundary-conditions-on-immersions-and-lagrangians}) to generalized the Lagrangian flux functional to compact manifolds with boundary \cite[{\S}4.2]{SolomonYuval2020-geodesics}. They called the generalized functional the \textit{relative Lagrangian flux} since it took values in the relative first cohomology $H^1(L, \partial L; \RR)$. When $L = [0,1]\times N$ is a cylinder and $d=2$, then the relative Lagrangian flux recovers the Hamiltonian generating the geodesic of positive exact Lagrangians connecting $\Lambda_0$ and $\Lambda_1$, and dual to a component of the moduli space $\scal\lcal(X,L; \Lambda_0,\Lambda_1)$ via the cylindrical transform (Solomon--Yuval \cite[{\S}5]{SolomonYuval2020-geodesics}).

The \textit{special Lagrangian flux functional} $SF$ \eqref{eq:HWB-intro-dual-flux} is a special Lagrangian analog of the Lagrangian flux functional. A fundamental observation of Hitchin was to place the three real differential forms $\omega$, $\Re(\Omega)$ and $\Im(\Omega)$ on an equal footing in the studying a Calabi--Yau manifold \cite[{\S}2]{hitchin1997moduli}. A special Lagrangian is defined by the vanishing of two of these forms, namely $\omega$ and $\Im(\Omega)$ (Definition~\ref{def:immersedlagrangiansandspeciallagrangians}), so the special Lagrangian flux $SF$ is an analog of $RF$ and is expected to play an important role in the study of special Lagrangians.

In the notation of \eqref{eq:HWB-thm-1-theta-phi-notation}, $RF(f) = [\int_0^1 \theta_t dt]$ and $SF(f) = [\int_0^1 \phi_t dt]$. The key property of the flux functions is that they depend only on the end-point preserving homotopy class of the path $\zcal$. Fix a lifting $f_0:L \rightarrow X$ of a free immersed special Lagrangian $\zcal_0 \in \lcal(X,L; \Lambda_1,\dots, \Lambda_d)$ and a path $\zcal:[0,1] \rightarrow \lcal(X,L;\Lambda_1,\dots, \Lambda_d)$ starting at $\zcal_0$. Consider a smooth lifting $f:[0,1] \times L \rightarrow X$ of the path $\zcal$ (Definition \ref{def:smoothpathoflagrangians}) such that $f(0,\cdot) = f_0$.

\begin{theorem}
\label{thm:HWB-rel-flux-well-defined}
 $RF(f)$ is well defined by \eqref{eq:HWB-intro-rel-flux} and depends only on the lifting $f_0$ and the end point preserving homotopy class of $\zcal$ in $ \lcal(X,L; \Lambda_1, \dots, \Lambda_d)$.
\end{theorem}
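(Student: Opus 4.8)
The plan is to prove the statement in two stages: first that the integrand in \eqref{eq:HWB-intro-rel-flux} represents a well-defined class in $H^1(L,\partial L;\RR)$, and then that this class is unchanged under endpoint-preserving homotopies of $\zcal$. Throughout I write $V_t = df_t/dt$ and $\theta_t = f_t^* i_{V_t}\omega$, so that $RF(f) = [\int_0^1\theta_t\,dt]$. For the well-definedness, Lemma~\ref{lemma:HWB-formula-for-derivative-of-path-of-Lagrangians} gives $\frac{d}{dt}f_t^*\omega = d(f_t^* i_{V_t}\omega) = d\theta_t$; since every $f_t$ is Lagrangian we have $f_t^*\omega\equiv 0$, whence $d\theta_t = 0$ and $\int_0^1\theta_t\,dt$ is closed. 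To see it defines a \emph{relative} class I check $i^*\theta_t = 0$ for $i:\partial L\hookrightarrow L$: for $w\in T(\partial L)$ the vector $df_t(w)$ is tangent to the image $f_t(\partial L)\subset\Lambda_j$, and the boundary conditions of Definition~\ref{def:HWB-boundary-conditions-on-immersions-and-lagrangians} force $V_t$ to be tangent to the same $\Lambda_j$ along $\partial L$; as $\Lambda_j$ is Lagrangian, $\theta_t(w) = \omega(V_t, df_t(w)) = 0$. Hence $\int_0^1\theta_t\,dt$ is a closed $1$-form vanishing on $\partial L$, so it determines a class in $H^1(L,\partial L;\RR)$.

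For the homotopy invariance, given an endpoint-preserving homotopy of paths I would lift it (using Definition~\ref{def:smoothpathoflagrangians}) to a smooth two-parameter family $F:[0,1]_s\times[0,1]_t\times L\to X$ of Lagrangian immersions satisfying the boundary conditions, with $F(0,\cdot,\cdot)$ and $F(1,\cdot,\cdot)$ the two given lifts, $F(s,0,\cdot)\equiv f_0$, and $F(s,1,\cdot)$ a reparametrization of the fixed endpoint $\zcal_1$. Write $V = F_*\partial_t$, $W = F_*\partial_s$, $\theta = F_{s,t}^* i_V\omega$, $\eta = F_{s,t}^* i_W\omega$, and $c = \omega(W,V)$. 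Because $\omega$ is closed and each slice $F_{s,t}$ is Lagrangian, the $2$-form $F^*\omega$ is closed with vanishing pure-$L$ part, i.e.\ $F^*\omega = dt\wedge\theta + ds\wedge\eta + c\,ds\wedge dt$. Extracting the component of $0 = d(F^*\omega)$ of type $ds\wedge dt\wedge(\text{$1$-form on }L)$ yields the key identity $\partial_s\theta = \partial_t\eta - d_L c$.

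Differentiating under the integral and applying the fundamental theorem of calculus in $t$ then gives
\[
\frac{d}{ds}\int_0^1\theta\,dt = \int_0^1\partial_s\theta\,dt = \big(\eta|_{t=1}-\eta|_{t=0}\big) - d_L\!\int_0^1 c\,dt .
\]
It remains to see that each term vanishes in $H^1(L,\partial L;\RR)$. Since $F(s,0,\cdot)\equiv f_0$ we have $W|_{t=0} = 0$, so $\eta|_{t=0} = 0$; since $F(s,1,\cdot)$ only reparametrizes $\zcal_1$, the field $W|_{t=1}$ lies in the image of $dF_{s,1}$, and the Lagrangian condition forces $\eta|_{t=1} = F_{s,1}^* i_W\omega = 0$; finally, as $W$ and $V$ are tangent to $\Lambda_j$ along $\partial L$, the function $c = \omega(W,V)$ vanishes on $\partial L$, so $d_L\int_0^1 c\,dt$ is the differential of a function vanishing on $\partial L$ and is therefore zero in relative cohomology. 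Hence $\frac{d}{ds}[RF] = 0$, the class is constant along the homotopy, and $RF(f)$ depends only on $f_0$ and the endpoint-preserving homotopy class of $\zcal$.

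The analytic core---the identity $\partial_s\theta = \partial_t\eta - d_L c$---is a clean consequence of $d(F^*\omega)=0$. I expect the genuine difficulties to be (i) producing the smooth two-parameter lift $F$ with the prescribed behaviour at $t=0,1$ and with every slice Lagrangian and boundary-constrained, which is exactly where the definition of a smooth path (and homotopy) of Lagrangians is essential; and (ii) the careful bookkeeping that the three boundary contributions vanish as \emph{relative} classes---in particular, identifying ``endpoint-preserving in $\lcal$'' with ``tangential, hence $\omega$-null, motion of the endpoint lift,'' and verifying the $\Lambda_j$-tangency that kills $c$ on $\partial L$.
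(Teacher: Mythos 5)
Your proof is correct, and its first half (that $\int_0^1\theta_t\,dt$ is a closed $1$-form vanishing on $\partial L$, hence a class in $H^1(L,\partial L;\RR)$) coincides with the paper's Lemma~\ref{lemma:HWB-rel-flux-well-defined-lemma-1} via Lemma~\ref{lemma:HWB-formula-for-derivative-of-path-of-Lagrangians}. For homotopy invariance you take a genuinely different, form-level route. The paper first converts the statement into one about pairings with relative $1$-cycles: Lemma~\ref{lemma:HWB-formula-for-rel-flux-of-curve} identifies $\int_\gamma RF(f)$ with $\int_{[0,1]^2} l_{f,\gamma}^*\omega$ for each curve $\gamma$ with endpoints on $\partial L$, and Claim~\ref{claim:HWB-rel-flux-well-defined-main-claim} then shows this number is independent of the homotopy parameter by Cartan's formula and Stokes' theorem on the parameter square, killing the four edge contributions with the Lagrangian conditions on $f_0$, $f_1$, $\Lambda_i$, $\Lambda_j$. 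You instead decompose $F^*\omega = dt\wedge\theta + ds\wedge\eta + c\,ds\wedge dt$ on $[0,1]^2\times L$, extract $\partial_s\theta = \partial_t\eta - d_Lc$ from $d(F^*\omega)=0$, and exhibit the $s$-derivative of $\int_0^1\theta\,dt$ explicitly as the differential of a function vanishing on $\partial L$, i.e.\ a relative coboundary. This bypasses the cycle-pairing lemma and the duality argument of Remark~\ref{remark:HWB-formula-for-rel-flux-of-curve} entirely and yields slightly more (an explicit primitive for the discrepancy between the two flux forms); the paper's version is more pictorial and reuses the cycle characterization elsewhere. The same three vanishing statements ($W|_{t=0}=0$, tangency of $W|_{t=1}$ to the Lagrangian endpoint, and $\Lambda_j$-tangency of $V,W$ along $\partial L$) appear in both arguments, just packaged differently. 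The one step you defer --- producing a two-parameter lift $F$ normalized so that $F(s,0,\cdot)=f_0$, $F(0,t,\cdot)=f_t$, $F(1,t,\cdot)=\bar{f}_t$ and $F(s,1,\cdot)=f_1\circ\psi_s$ --- is exactly the content of Claim~\ref{claim:HWB-rel-flux-claim1} (compose an arbitrary lift of the homotopy with suitable paths in $\Diff(L)$, using freeness of the immersions), so it is a real but routine ingredient rather than a gap.
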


\begin{theorem}
\label{thm:HWB-dual-flux-well-defined}
    $SF(f)$ is well defined by \eqref{eq:HWB-intro-dual-flux} and depends only on the lifting $f_0$ and the end point preserving homotopy class of $\zcal$ in $\scal \lcal(X,L; \Lambda_1,\dots, \Lambda_d)$.
\end{theorem}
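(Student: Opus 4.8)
The plan is to mirror the proof of Theorem~\ref{thm:HWB-rel-flux-well-defined}, replacing the symplectic form $\omega$ by $\Im(\Omega)$ and the relative cohomology $H^1(L,\partial L;\RR)$ by the absolute cohomology $H^{n-1}(L;\RR)$. The essential new feature is that $\Im(\Omega)$ pulls back to zero precisely on \emph{special} Lagrangians, so the homotopy must now be taken in $\scal\lcal$ rather than in $\lcal$, and this is exactly what makes the endpoint contribution disappear. First I would dispose of well-definedness: along a path of special Lagrangian immersions each slice satisfies $f_t^*\Im(\Omega)\equiv 0$ by Definition~\ref{def:immersedlagrangiansandspeciallagrangians}, so Lemma~\ref{lemma:HWB-phit-is-closed} applies and the integrand $\phi_t = f_t^* i_{\partial_t f_t}\Im(\Omega)$ is a closed $(n-1)$-form for each $t$. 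Since exterior differentiation commutes with integration in $t$, the form $\int_0^1\phi_t\,dt$ is closed and represents a class in $H^{n-1}(L;\RR)$, so $SF(f)$ is defined by \eqref{eq:HWB-intro-dual-flux}.

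The conceptual reformulation I would use is fiber integration. Writing $F(t,x)=f_t(x)$ and $\pi\colon[0,1]\times L\to L$ for the projection, a pointwise contraction computation identifies $\int_0^1\phi_t\,dt$ with the fiber integral $\pi_*(F^*\Im(\Omega))$; that is, the flux is the integral of $\Im(\Omega)$ over the trace of the path. With this reformulation, both well-definedness and homotopy invariance reduce to the fiber-integration form of Stokes' theorem applied to the closed form $\Im(\Omega)$ (closed because $\Omega$ is closed).

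For homotopy invariance, given two liftings $f,g$ with $f_0=g_0$ whose underlying paths are endpoint-preservingly homotopic in $\scal\lcal$, I would lift the homotopy, using the manifold structure of Theorem~\ref{thm:HWB-tangentspacetospeciallagrangianswithboundary-2} together with the smooth-lifting apparatus of Definition~\ref{def:smoothpathoflagrangians}, to a two-parameter family $H\colon[0,1]_s\times[0,1]_t\times L\to X$ with $H(0,\cdot,\cdot)=f$, $H(1,\cdot,\cdot)=g$, $H(s,0,\cdot)=f_0$, and $H(s,1,\cdot)$ parametrizing the fixed endpoint $\zcal_1$ for every $s$. Applying the fiber-integration form of Stokes to $H^*\Im(\Omega)$ over the $s$-direction and then the $t$-direction expresses $SF(g)-SF(f)$ as the sum of an $L$-exact term and two boundary contributions at $t=0$ and $t=1$. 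The $L$-exact term vanishes in $H^{n-1}(L;\RR)$, and the $t=0$ term vanishes because $f_0$ is held fixed.

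The main obstacle, and the one step genuinely special to $SF$, is the $t=1$ boundary term, which equals the flux of the endpoint path $s\mapsto H(s,1,\cdot)$. I would show it vanishes by a tangency argument: for each fixed $x$ the point $H(s,1,x)$ stays on the fixed special Lagrangian $\zcal_1$, so the velocity $\partial_s H(s,1,x)$ is tangent to $\zcal_1$; contracting a tangent vector into $\Im(\Omega)$ and restricting the remaining slots to $\zcal_1$ reproduces $(H(s,1,\cdot))^*\Im(\Omega)$ evaluated on tangent vectors, which is $0$ because $\zcal_1$ is special. Hence $SF(f)=SF(g)$, and $SF$ descends to a function of $f_0$ and the homotopy class of $\zcal$. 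The delicate point throughout is the construction of a smooth two-parameter lift $H$ realizing the prescribed boundary behavior; this is where the manifold structure of $\scal\lcal$ and the reparametrization freedom in liftings must be invoked, exactly as in the proof of Theorem~\ref{thm:HWB-rel-flux-well-defined}.
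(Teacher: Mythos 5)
Your proposal is correct and its core is the same as the paper's: well-definedness via Lemma~\ref{lemma:HWB-phit-is-closed}; a lift of the homotopy normalized so that the $t=0$ end is the fixed immersion $f_0$ and the $t=1$ end runs through reparametrizations of a fixed lift of $\zcal_1$ (the paper reuses Claim~\ref{claim:HWB-rel-flux-claim1} for this); an application of Cartan/Stokes in the homotopy direction; and the two boundary terms killed exactly as you describe --- the $t=0$ term because the velocity vanishes, the $t=1$ term because the velocity is tangent to the image of $f_1$ and $f_1^*\Im(\Omega)=0$. The one genuine difference is how the cohomology class is detected. The paper pairs $SF$ against classes $\sigma_*[B]$ for closed $(n-1)$-manifolds $B$ (Lemma~\ref{lemma:HWB-formula-for-dual-flux-of-curve}), invoking Thom's realization theorem (Remark~\ref{remark:HWB-formula-for-dual-flux-of-curve}) to know these pairings determine the class, and then shows each scalar integral $\int_{[0,1]\times B}\beta_u^*\Im(\Omega)$ is independent of the homotopy parameter. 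You instead work at the level of forms, identifying $SF$ with the fiber integral $\pi_*\left(F^*\Im(\Omega)\right)$ and applying the fiber-integration form of Stokes, so that $SF(g)-SF(f)$ is exhibited directly as an exact form plus the two boundary contributions; this avoids Thom's theorem at the cost of the (standard) projection/Stokes formula for integration along the fiber. Both routes are sound and carry the same geometric content. One small caution: since the Lagrangians are immersed, the step ``$H(s,1,x)$ stays on $\zcal_1$, hence $\partial_sH(s,1,x)$ is tangent to $\zcal_1$'' should be justified as in the paper by writing $H(s,1,\cdot)=f_1\circ\psi_s$ for a smooth family $\psi_s\in\Diff(L)$ (using freeness of the immersions), so that $\partial_sH(s,1,x)=df_1\left(\partial_s\psi_s(x)\right)\in df_1(TL)$; for a merely immersed $\zcal_1$ the pointwise ``stays on the submanifold'' argument is not quite sufficient on its own.
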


\begin{remark}
    By Theorem~\ref{thm:HWB-rel-flux-well-defined} the relative and special Lagrangian flux of a path $\zcal:[0,1] \rightarrow \scal\lcal$ are independent of the chosen lifting and only depend on the homotopy class of the input path. Unfortunately they still depend on a lifting of one end point $\zcal_0$ of the path. This dependence arises because the flux functionals naturally take values in the cohomology of the immersed submanifold $f_0(L) = \zcal_0 \subset X$ and not the reference manifold $L$. A choice of immersion $f_0:L \rightarrow X$ representing $\zcal_0$ is necessary to identify the cohomology of $\zcal_0$ with that of $L$.
\end{remark}

Since $RF$ and $SF$ depend only on the end-point preserving homotopy class of the path, they restrict to maps on any simply-connected pointed neighborhood of the moduli space $\scal \lcal$ obtained by mapping any element to its flux along a path connecting it to the base-point.

For any simply-connected neighborhood $\ucal \subset \scal\lcal(X,L;\Lambda_1,\dots, \Lambda_d)$, a basepoint $\zcal_0 \in \ucal$ and a lifting $f_0$. Consider map (Definition~\ref{def:HWB-def-of-coordinate-chart})
\[R_{f_0}: \ucal \rightarrow H^1(L;\partial L; \RR) \quad \& \quad  S_{f_0}: \ucal \rightarrow H^{n-1}(L;\RR)\]
where $R_{f_0}(\zcal_1)$ (resp. $S_{f_0}$ is the relative (resp. special) Lagrangian flux of any lifting of a path joining $\zcal_0$ to $\zcal_1$ starting at the lifting $f_0$. It turns out that after shrinking $\ucal$ further, $R_{f_0}$ and $S_{f_0}$ are diffeomorphisms onto their image, and define the affine charts on the moduli space.

\begin{theorem}
\label{thm:HWB-affine-structure-on-moduli-space}
    For any point $\zcal_0 \in  \scal\lcal:= \scal\lcal(X,L; \Lambda_1,\dots, \Lambda_d)$ (Definition~\ref{def:HWB-moduli-space-of-special-lagrangians}), there exists a simply-connected open neighborhood $\ucal \subset \scal\lcal$ such that for any special Lagrangian immersion $f_0:L \rightarrow X$ representing $\zcal_0$, the maps $R_{f_0}$ and $S_{f_0}$ on $\ucal$ (Definition~\ref{def:HWB-def-of-coordinate-chart}) are diffeomorphisms onto their image.

    The moduli space $\scal\lcal$ admits an open cover by simply connected pointed coordinate charts of the form $(\ucal_{\alpha},\zcal_\alpha,f_\alpha)_{\alpha \in I}$ (Definition~\ref{def:HWB-pointed-coordinate-chart}) such that each of the two atlases
    \[(\ucal_{\alpha}, R_{f_\alpha}: {\alpha \in I}) \qquad \& \qquad (\ucal_{\alpha}, S_{f_\alpha}:{\alpha \in I}) \]
    have volume preserving affine transition maps.
\end{theorem}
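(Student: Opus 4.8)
The plan is to prove the statement in two stages: first that each flux map is a local diffeomorphism, and then that the resulting transition maps are volume-preserving affine maps. For the first stage I would compute the differential of $R_{f_0}$ at the basepoint $\zcal_0$. Representing a tangent vector $v \in T_{\zcal_0}\scal\lcal$ by a path $\zcal_t$ with lifting $f_t$ (with $f_0$ fixed and infinitesimal generator $v$), the defining formula \eqref{eq:HWB-intro-rel-flux} gives
\[ d(R_{f_0})_{\zcal_0}(v) = \frac{d}{dt}\Big|_{t=0} R_{f_0}(\zcal_t) = [\theta_0] = [f_0^* i_v \omega]. \]
The content of Theorem~\ref{thm:HWB-tangentspacetospeciallagrangianswithboundary-2} and Proposition~\ref{prop:HWB-tangentspacetospeciallagrangianswithboundary-1} is precisely that $v \mapsto [f_0^* i_v \omega]$ is the isomorphism $T_{\zcal_0}\scal\lcal \xrightarrow{\sim} H^1(L,\partial L;\RR)$ identifying the tangent space with closed and co-closed $1$-forms vanishing on $\partial L$. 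Hence $d(R_{f_0})_{\zcal_0}$ is a linear isomorphism, and the inverse function theorem furnishes a neighborhood $\ucal$ of $\zcal_0$ on which $R_{f_0}$ is a diffeomorphism onto its image. The identical argument applied to \eqref{eq:HWB-intro-dual-flux}, using $d(S_{f_0})_{\zcal_0}(v) = [\phi_0] = [f_0^* i_v \Im(\Omega)]$ and the Poincar\'e--Lefschetz dual identification with $H^{n-1}(L;\RR)$, shows $S_{f_0}$ is a local diffeomorphism after a further shrinking of $\ucal$.

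For the second stage I would start from the additivity of the flux under concatenation of paths, which is immediate from \eqref{eq:HWB-intro-rel-flux}: the defining integral splits over subintervals and is independent of the $t$-parametrization. Fix two overlapping charts $(\ucal_\alpha,\zcal_\alpha,f_\alpha)$ and $(\ucal_\beta,\zcal_\beta,f_\beta)$ and a point $\zcal_1 \in \ucal_\alpha \cap \ucal_\beta$. Choosing a path from $\zcal_\beta$ to $\zcal_1$ that first runs from $\zcal_\beta$ to $\zcal_\alpha$ and then from $\zcal_\alpha$ to $\zcal_1$, and lifting it starting from $f_\beta$, the first segment terminates at some lifting $f_\alpha' = f_\alpha \circ \psi_{\alpha\beta}$ of $\zcal_\alpha$, with $\psi_{\alpha\beta} \in \Diff(L,\partial L)$. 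Additivity together with the homotopy invariance of Theorem~\ref{thm:HWB-rel-flux-well-defined} then yields, on the overlap,
\[ R_{f_\beta}(\zcal_1) = c_{\alpha\beta} + \psi_{\alpha\beta}^*\, R_{f_\alpha}(\zcal_1), \]
where $c_{\alpha\beta} \in H^1(L,\partial L;\RR)$ is the flux of the segment from $\zcal_\beta$ to $\zcal_\alpha$ (a constant independent of $\zcal_1$), and the linear part $\psi_{\alpha\beta}^*$ appears because reparametrizing the lifting by $\psi_{\alpha\beta}$ pulls back the flux integrand by $\psi_{\alpha\beta}$. Thus the transition map $R_{f_\beta}\circ R_{f_\alpha}^{-1}$ is the affine map $x \mapsto \psi_{\alpha\beta}^* x + c_{\alpha\beta}$.

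Volume preservation then follows because $\psi_{\alpha\beta}$ is a diffeomorphism of the pair $(L,\partial L)$, so $\psi_{\alpha\beta}^*$ restricts to an automorphism of the integral lattice $H^1(L,\partial L;\ZZ) \subset H^1(L,\partial L;\RR)$; any lattice automorphism has $\det = \pm 1$ and hence preserves the translation-invariant volume form of covolume one determined by the lattice, while translations preserve it automatically. Running the same argument for $S_{f_\alpha}$ with the lattice $H^{n-1}(L;\ZZ) \subset H^{n-1}(L;\RR)$ settles the second atlas.

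The main obstacle I anticipate is the second stage. In the immersed (rather than embedded) setting one must track carefully how transporting the lifting along the path produces the reparametrization $\psi_{\alpha\beta}$, and confirm that the induced action on cohomology is exactly the linear part extracted from the well-defined composite $R_{f_\beta}\circ R_{f_\alpha}^{-1}$; since both flux maps are path-independent by Theorems~\ref{thm:HWB-rel-flux-well-defined} and~\ref{thm:HWB-dual-flux-well-defined}, any choice of connecting path gives the same linear part, which is therefore a canonical lattice automorphism. Sharpening ``volume preserving'' to $\det = +1$ (a genuinely \emph{special} affine structure) requires the additional input that $\psi_{\alpha\beta}$ preserves orientation; I would deduce this from the continuity of the lifting over the connected chart, which forces $\psi_{\alpha\beta}$ to lie in the identity component and hence act trivially on the orientation of $H^1(L,\partial L;\RR)$.
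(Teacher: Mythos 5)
Your first stage is exactly the paper's argument (Lemma~\ref{lemma:HWB-rel-and-dual-are-local-diffeo}): compute $d(R_{f_0})_{\zcal_0}(v)=[f_0^*i_v\omega]$ and $d(S_{f_0})_{\zcal_0}(v)=[\star_{f_0}f_0^*i_v\omega]$, invoke Theorem~\ref{thm:HWB-tangentspacetospeciallagrangianswithboundary-2} and Proposition~\ref{prop:HWB-tangentspacetospeciallagrangianswithboundary-1} to see these are isomorphisms onto $H^1(L,\partial L;\RR)$ and $H^{n-1}(L;\RR)$, and apply the inverse function theorem. Your second stage is also in the spirit of the paper (additivity of flux under concatenation, the reparametrization $\psi_{\alpha\beta}$ contributing the linear part $\psi_{\alpha\beta}^*$ via Lemma~\ref{lemma:HWB-change-of-immersion}, and the integral-lattice argument for volume preservation as in Corollary~\ref{corollary:HWB-special-affine-manifold}), but it has a concrete gap.

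The gap is in the identity $R_{f_\beta}(\zcal_1)=c_{\alpha\beta}+\psi_{\alpha\beta}^*R_{f_\alpha}(\zcal_1)$. By Definition~\ref{def:HWB-def-of-coordinate-chart}, $R_{f_\beta}(\zcal_1)$ is the flux of a path from $\zcal_\beta$ to $\zcal_1$ lying \emph{inside} $\ucal_\beta$; homotopy invariance (Theorem~\ref{thm:HWB-rel-flux-well-defined}) only lets you replace that path by one homotopic to it rel endpoints. Your concatenated path $\zcal_\beta\to\zcal_\alpha\to\zcal_1$ leaves $\ucal_\beta$ whenever $\zcal_\alpha\notin\ucal_\beta$, and there is no reason for it to be homotopic rel endpoints in $\scal\lcal$ to a path contained in $\ucal_\beta$ (if $\pi_1(\scal\lcal)\neq 0$ its flux can genuinely differ), so the left-hand side of your formula is not what you have computed. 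The paper avoids this by first shrinking the cover so that pairwise overlaps are connected, then routing \emph{both} base points through a common point $\zcal_2\in\ucal_\alpha\cap\ucal_\beta$ via two applications of the change-of-base-point Lemma~\ref{lemma:HWB-change-of-basepoint} (each of whose paths stays inside a single chart), eliminating the common flux term, and finally proving (Claims~\ref{claim:HWB-change-of-chart-claim1}--\ref{claim:HWB-change-of-chart-claim2}) that the resulting linear part and translation are independent of the auxiliary choice of $\zcal_2$ and $f_2$. You would need this detour, or an explicit hypothesis that $\zcal_\alpha\in\ucal_\beta$, to make your transition formula correct. Two smaller points: the lattice argument by itself only gives $\det\psi_{\alpha\beta}^*=\pm1$ (which does suffice for ``volume preserving''), and your claim that $\psi_{\alpha\beta}$ lies in the identity component of $\Diff(L)$ is not justified --- $\psi_{\alpha\beta}$ compares two different liftings of the same immersed submanifold and is not produced as the endpoint of an isotopy from $\id_L$.
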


Hitchin also observed that that product of the two affine charts $R_{f_0}$ and $S_{f_0}$ locally embeds the moduli space of closed special Lagrangians isometrically as a Lagrangian submanifold of $H^1(L,;\RR) \times H^{n-1}(L; \RR)$ \cite[pp.~508--510]{hitchin1997moduli}. This observation was used to prove that the $L^2$ metric on the moduli space is a Hessian metric. This observation generalizes to SLb.

\begin{proposition}
\label{prop:HWB-isometric-embedding-intro}
    Let $\ucal \subset \scal\lcal := \scal\lcal(X,L; \Lambda_1,\dot, \Lambda_d)$ be a simply connected neighborhood. Fix $\zcal_0 \in \ucal$ and a special Lagrangian immersion $f_0:L \rightarrow X$ representing $\zcal_0$. Recall that the product $H^{1}(L, \partial L; \RR) \times H^{n-1} (L; \RR)$ admits a natural indefinite metric $B$ and symplectic form $W$ (Definition~\ref{def:geometryofVtimesV*}). Then for
    \[F_{f_0}: = (R_{f_0}, S_{f_0}) : \ucal \rightarrow \scal\lcal, \]
    $F_{f_0}^*B$ coincides with an $L^2$ metric on $\scal\lcal$ (Definition~\ref{def:HWB-L2-iner-product}) and is a Hessian metric. $F_{f_0}^*W \equiv 0$ making $F_{f_0}$ a Lagrangian embedding.
\end{proposition}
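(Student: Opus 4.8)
The plan is to reduce the statement to a linear-algebra computation on tangent spaces by identifying the differential of the combined chart $F_{f_0}=(R_{f_0},S_{f_0})$, and then to extract the Hessian structure from the vanishing of $F_{f_0}^*W$. First I would compute $dF_{f_0}$ at a point $\zcal\in\ucal$. By Theorem~\ref{thm:HWB-tangentspacetospeciallagrangianswithboundary-2} a tangent vector is represented by a harmonic $1$-form $\alpha=i_V\omega$ on $L$ vanishing on $\partial L$, for the associated normal field $V$. Differentiating the flux functionals \eqref{eq:HWB-intro-rel-flux}--\eqref{eq:HWB-intro-dual-flux} with respect to the endpoint of the path (a fundamental-theorem-of-calculus step) gives $dR_{f_0}(\alpha)=[\alpha]\in H^1(L,\partial L;\RR)$ and $dS_{f_0}(\alpha)=[i_V\Im(\Omega)]\in H^{n-1}(L;\RR)$. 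By Proposition~\ref{prop:HWB-tangentspacetospeciallagrangianswithboundary-1} the two integrands are pointwise Hodge duals, so, with the sign convention fixed there, $i_V\Im(\Omega)=\ast\alpha$ and hence
\[ dF_{f_0}(\alpha)=\big([\alpha],[\ast\alpha]\big)\in H^1(L,\partial L;\RR)\times H^{n-1}(L;\RR). \]

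Next I would evaluate the pullbacks using the Poincar\'e--Lefschetz pairing underlying Definition~\ref{def:geometryofVtimesV*}: for $[\alpha]\in H^1(L,\partial L;\RR)$ and $[\gamma]\in H^{n-1}(L;\RR)$ it is $\langle[\alpha],[\gamma]\rangle=\int_L\alpha\wedge\gamma$, so for two tangent vectors $\alpha,\beta$ one gets $\langle[\alpha],[\ast\beta]\rangle=\int_L\alpha\wedge\ast\beta=(\alpha,\beta)_{L^2}$. Since $B$ and $W$ are respectively the symmetric and antisymmetric combinations of this pairing across the two factors, the symmetry $(\alpha,\beta)_{L^2}=(\beta,\alpha)_{L^2}$ yields
\[ F_{f_0}^*B(\alpha,\beta)=\langle[\alpha],[\ast\beta]\rangle+\langle[\beta],[\ast\alpha]\rangle=2(\alpha,\beta)_{L^2},\qquad F_{f_0}^*W(\alpha,\beta)=(\alpha,\beta)_{L^2}-(\beta,\alpha)_{L^2}=0. \]
The first identity is, up to the normalization fixed in Definition~\ref{def:geometryofVtimesV*}, exactly the metric of Definition~\ref{def:HWB-L2-iner-product}; the second shows $F_{f_0}$ is isotropic. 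Combined with the fact that $R_{f_0}$ is already a diffeomorphism onto its image (Theorem~\ref{thm:HWB-affine-structure-on-moduli-space}) and the dimension count $\dim\scal\lcal=\dim H^1(L,\partial L;\RR)$ of Theorem~\ref{thm:HWB-tangentspacetospeciallagrangianswithboundary-2}, this upgrades $F_{f_0}$ to a Lagrangian embedding.

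For the Hessian property I would adopt the affine coordinates $x=R_{f_0}$, in which the image of $F_{f_0}$ is the graph $\{(x,S(x))\}$ of $S=S_{f_0}\circ R_{f_0}^{-1}$. The Lagrangian condition $F_{f_0}^*W\equiv0$ says this graph is Lagrangian for the canonical symplectic form on $H^1(L,\partial L;\RR)\times H^{n-1}(L;\RR)$, equivalently that the $H^{n-1}(L;\RR)$-valued $1$-form $x\mapsto S(x)$ is closed; as $\ucal$ is simply connected it is exact, $S=d\Phi$. Writing $F_{f_0}^*B$ in these coordinates gives $g_{ij}=\partial_j S_i+\partial_i S_j=2\,\partial_i\partial_j\Phi$, so the $L^2$ metric is the Hessian of $2\Phi$ with respect to the affine chart $R_{f_0}$, which is precisely the assertion that it is a Hessian metric.

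The main obstacle is the boundary bookkeeping. The whole argument rests on the identity $\int_L\alpha\wedge\ast\beta=(\alpha,\beta)_{L^2}$ descending to a \emph{well-defined} pairing between the relative class $[\alpha]\in H^1(L,\partial L;\RR)$ and the absolute class $[\ast\beta]\in H^{n-1}(L;\RR)$, and on this being the Poincar\'e--Lefschetz pairing built into Definition~\ref{def:geometryofVtimesV*}. I would verify that no boundary terms obstruct the integration by parts used to pass between representatives---this is exactly where the boundary vanishing of the harmonic forms in Theorem~\ref{thm:HWB-tangentspacetospeciallagrangianswithboundary-2} enters---so that $\langle[\alpha],[\ast\beta]\rangle$ is independent of the chosen closed and co-closed representatives and compatible with the well-definedness of $R_{f_0},S_{f_0}$ (Theorems~\ref{thm:HWB-rel-flux-well-defined} and \ref{thm:HWB-dual-flux-well-defined}). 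Once this compatibility is in place the remainder is a formal consequence of Hitchin's closed-manifold computation, with the relative cohomology $H^1(L,\partial L;\RR)$ and Poincar\'e--Lefschetz duality playing the roles of $H^1(L;\RR)$ and Poincar\'e duality.
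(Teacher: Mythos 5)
Your proposal is correct and follows essentially the same route as the paper's proof of Proposition~\ref{prop:HWB-isometric-embedding}: differentiate the flux maps to get $dF_{f_0}(\theta)=(\theta,\star_{f_0}\theta)$, pull back $B$ and $W$ through the Poincar\'e--Lefschetz pairing to recover the $L^2$ metric and the vanishing of $W$, and then read off the Hessian property from the resulting gradient-graph (Lagrangian graph) structure over the affine chart $R_{f_0}$. The only discrepancy is the factor of $2$ in $F_{f_0}^*B$, which disappears once the $\tfrac12$ in \eqref{eq:metriconVtimesV*} is carried through, as you anticipate.
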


Proposition~\ref{prop:HWB-isometric-embedding-intro} follows from Proposition~\ref{prop:HWB-isometric-embedding}. Hitchin's original argument generalizes to the setting of SLb \cite[pp.~508--510]{hitchin1997moduli}. We present a proof of the isometric and Lagrangian embedding using the Lagrangian flux functionals \eqref{eq:HWB-intro-rel-flux} and \eqref{eq:HWB-intro-dual-flux} as an application of these objects.



\textbf{Organization.} Section~\ref{sec:HWB-preliminaries} sets up the notation and definitions related the moduli of immersed Lagrangians and special Lagrangians following Akveld--Salamon \cite{Akveld2001} and Solomon--Yuval \cite{SolomonYuval2020-geodesics}. Section~\ref{sec:HWB-lagrangian-flux} is concerned with the relative Lagrangian flux \eqref{eq:HWB-intro-rel-flux} and special Lagrangian flux \eqref{eq:HWB-intro-dual-flux}. Theorems~\ref{thm:HWB-rel-flux-well-defined} and \ref{thm:HWB-dual-flux-well-defined} are proved in Section~\ref{sec:HWB-lagrangian-flux}. Section~\ref{sec:HWB-canonicalcoordinatesandaffinestructure} contains the construction of Hitchin's affine structures on the moduli space of SLb and a proof of Theorem~\ref{thm:HWB-affine-structure-on-moduli-space}. Section~\ref{subsec:HWB-isometric-and-lagrangian-embedding} discusses the isometric and Lagrangian local embedding of the moduli space of SLb (Proposition~\ref{prop:HWB-isometric-embedding-intro}). Finally Section~\ref{sec:HWB-generalizations} briefly discusses generalizations of the results of the article to almost Calabi--Yau manifolds.

\section{Preliminaries}
\label{sec:HWB-preliminaries}

Section~\ref{sec:modulispaceoflagrangianswithboundary} recalls the symplectic geometry of immersed manifolds with boundary Calabi--Yau manifold following the conventions in Solomon--Yuval \cite[{\S}2]{SolomonYuval2020-geodesics}. The Lagrangian boundary conditions defining the moduli space of immersed Lagrangians and special Lagrangians are recalled in Definitions \ref{def:HWB-boundary-conditions-on-immersions-and-lagrangians} and \ref{def:HWB-moduli-space-of-special-lagrangians}. Section~\ref{subsec:HWB-smooth-paths-and-tangents} recalls the notion of a smooth path of immersed SLb, and the associated tangent 1-form. These were introduced by Akveld--Salamon \cite[{\S}2]{Akveld2001}.

\subsection{Special Lagrangians and Lagrangian boundary conditions}
\label{sec:modulispaceoflagrangianswithboundary}

\begin{definition}
    \label{def:HWB-calabi-Yau-manifold}
    A quadruple $(X,\omega,J,\Omega)$ is called a \textit{Calabi--Yau manifold} if $(X,J)$ is a K\"ahler manifold equipped with a Ricci flat K\"ahler metric $\omega$ and a holomorphic top form $\Omega$ of constant length. Since $\Omega$ has constant length it satisfies
\begin{equation}
    (-1)^{\frac{n(n-1)}{2}} \left( \frac{\sqrt{-1}}{2}\right)^n \Omega \wedge \overline{\Omega} = \frac{1}{n!} \omega^n.
    \label{eq:HWB-calabi-Yau-equation}
\end{equation}
The Riemannian metric on $X$ is denoted by
\begin{equation}
    \label{eq:metriconakahlermanifold}
    g := \omega(\cdot, J \cdot).
\end{equation}
\end{definition}

In this article a Calabi--Yau manifold is assumed to be Ricci flat. A K\"ahler manifold with trivial canonical bundle which is not Ricci flat is called \textit{almost Calabi--Yau}. Special Lagrangians are not minimal submanifolds in an almost Calabi--Yau manifold, but instead are minimal submanifolds for a Riemannian metric conformal to \eqref{eq:metriconakahlermanifold}. The results of this article directly generalize to almost Calabi--Yau manifolds (see Section~\ref{sec:HWB-generalizations}.

\begin{definition}~
\label{def:immersionsandimmersedforms}
\begin{enumerate}[label={(\arabic*)}]
    \item Let $L$ be a smooth compact oriented manifold of dimension $n$ with smooth boundary. $\Diff(L)$ denotes the group of diffeomorphisms of $L$ that preserve boundary components of $L$, i.e., $\psi \in \Diff(L)$ if
    \begin{enumerate}[label={(\roman*)}]
        \item $\psi:L \rightarrow L$ is a diffeomorphism and
        \item $\psi(C) \subset C$ for every connected component of the boundary $C\subset \partial L$.
    \end{enumerate}

    \item An \textit{immersion} $f:L \rightarrow X$ is a smooth map for which the derivative $df_p: T_pL \rightarrow T_{f(p)}X$ is injective for every $p\in L$. An immersion $f$ is called \textit{free} if $f\circ \psi = f$ and $\psi \in \Diff(L)$ implies $\psi = \id_L$, i.e., $f$ has trivial stabilizer under the reparametrization action of $\Diff(L)$. An injective immersion is always free, and when $L$ is compact an injective immersion is also an embedding.
    
    \item An \textit{immersed submanifold of $X$ of type $L$} is an equivalence class of immersions $f:L \rightarrow X$ under the reparametrization action of $\Diff(L)$,
    \[f \sim f' \quad \iff \quad  \exists \ \psi\in \Diff(L)\quad s.t. \quad f' = f\circ \psi. \]
    The equivalence class of an immersion $f$ is denoted by $[f]$.
    
    \item An immersed submanifold is called \textit{free} if it admits a representative that is a free immersion. It is immediate that every representative is free.
\end{enumerate}
\end{definition}

The results of this article are formulated in terms of immersed special Lagrangians. However most calculations proceed by fixing a specific parametrization or representative immersion, and the distinction between immersions and embeddings does not play a serious role in this article.

\begin{definition}~
\label{def:immersedlagrangiansandspeciallagrangians}
\begin{enumerate}[label={(\arabic*)}]
    \item An immersion $f:L \rightarrow X$ is \textit{Lagrangian} if $f^*\omega = 0$. Note that being Lagrangian is $\Diff(L)$ invariant.

    \item A Lagrangian immersion $f:L \rightarrow X$ is called \textit{special Lagrangian} if in addition $f^*\Im(\Omega) = 0$.
    
    \item An immersed manifold $[f]$ of type $L$ in $X$ is (special) Lagrangian if one representative is (special) Lagrangian. Note that if one representative is (special) Lagrandian then all representatives are so.
\end{enumerate}
\end{definition}

Definition \ref{def:HWB-boundary-conditions-on-immersions-and-lagrangians} recalls the Lagrangian boundary conditions for Lagrangians with boundary, introduced by Solomon--Yuval \cite[Notation 2.7]{SolomonYuval2020-geodesics}.

\begin{definition}
\label{def:HWB-boundary-conditions-on-immersions-and-lagrangians}
Let $L$ be a compact manifold with smooth boundary, and let $C_1,\dots, C_d$ denote its boundary components. Fix an ordering $C_1,\dots, C_d$ on the boundary components. Fix pairwise disjoint embedded Lagrangian submanifolds $\Lambda_1, \dots, \Lambda_d \subset X$.
\begin{enumerate}
    \item A smooth map $f:L \rightarrow X$ is said to have boundary condition $\Lambda_1,\dots, \Lambda_d$ if
\begin{enumerate}[label={(\alph*)}]
    \item $f(C_i) \subset \Lambda_i$ for $i=1,\dots, d$, and
    \item $df(T_pL) \not\subset T_{f(p)} \Lambda_i$ for all $p\in C_i$ and $i=1,\dots, d$.
\end{enumerate}
    The set of smooth maps with boundary conditions $\Lambda_1,\dots, \Lambda_d$ is denoted by $C^\infty(L, X; \Lambda_1,\dots, \Lambda_d)$.

    If $f$ is in addition a free immersion, then $f|_{C_i}: C_i \rightarrow \Lambda_i$ is an immersion for $i=1,\dots, d$. Note that both (a) and (b) are invariant under reparametrization by $\Diff(L)$ because by Definition \ref{def:immersionsandimmersedforms}(1) elements of $\Diff(L)$ preserve boundary components.

\item An immersed submanifold of type $L$ is said to have boundary conditions $\Lambda_1, \dots, \Lambda_d$ if one and hence every representative immersion has these boundary condition. 
\end{enumerate}
Implicit in Definition~\ref{def:HWB-boundary-conditions-on-immersions-and-lagrangians}.1 is an ordering of the $d$ boundary components of $L$ corresponding to the ordering $\Lambda_1,\dots, \Lambda_d$.
\end{definition}

\begin{remark}
    There have been other notions of boundary conditions in the study of special Lagrangians. Butscher studied a different class of special Lagrangians whose boundary is constrained to lie on a fixed co-dimension 2 symplectic submanifold \cite[p.~3]{Butscher2002}, and generalized McLean's deformation theorem under these boundary conditions \cite{Mclean1998}. The resulting moduli space of special Lagrangians is fairly different from the one studied in this article, and its tangent space is identified with harmonic 1-forms satisfying a Neumann boundary condition (compare with Theorem~\ref{thm:HWB-tangentspacetospeciallagrangianswithboundary-2}).
\end{remark}
 
\begin{definition}
\label{def:HWB-moduli-space-of-special-lagrangians}
Let $L$ have $d$ boundary components $C_1,\dots, C_d$, and let $\Lambda_1, \dots, \Lambda_d \subset X$ be pairwise disjoint embedded Lagrangians.
\begin{enumerate}[label={(\arabic*)}]
    \item The moduli space of smooth free immersed Lagrangian submanifolds of $X$ of type $L$ with boundary conditions $\Lambda_1,\dots, \Lambda_d$ is denoted by
    \[\lcal := \lcal(X,L; \Lambda) := \lcal (X,L ; \Lambda_1,\dots, \Lambda_d).\]
    \item The moduli space of smooth free immersed special Lagrangian submanifolds in $X$ of type $L$ with boundary conditions $\Lambda_1,\dots, \Lambda_d$  is denoted by
    \begin{gather*}
        \scal\lcal :=  \scal \lcal(X,L; \Lambda) :=\scal \lcal (X,L ; \Lambda_1,\dots, \Lambda_d).
    \end{gather*}
\end{enumerate}
As in Definition~\ref{def:HWB-boundary-conditions-on-immersions-and-lagrangians}, the boundary components $C_1,\dots, C_d$ of $L$ and the Lagrangians $\Lambda_1,\dots, \Lambda_d$ are assumed to be ordered. The boundary data $\Lambda_1,\dots, \Lambda_d$ is often abbreviated to $\Lambda$ or omitted to simplify notation.
\end{definition}

  \subsection{Smooth paths and tangents in the moduli space of Lagrangians with boundary}
  \label{subsec:HWB-smooth-paths-and-tangents}

  This section recalls the notion of a smooth map of Lagrangian immersion introduced by Akveld--Salamon's \cite[p.~613]{Akveld2001}, and the tangent vector to such a smooth path \cite[Lemma~2.1]{Akveld2001}.

    \begin{definition}
    \label{def:smoothpathoflagrangians}
    Let $Y$ be a simply connected smooth manifold, and consider a map $\zcal:Y \rightarrow \lcal(X,L; \Lambda_1,\dots, \Lambda_d)$ (Definition~\ref{def:HWB-moduli-space-of-special-lagrangians}). A smooth (resp. $C^{k,\alpha}$) lifting of $\zcal$ is a smooth (resp. $C^{k,\alpha}$) function \[f:Y \times L \rightarrow X, \qquad f_y := f(y,\cdot)\]
    such that $f_y:L \rightarrow X$ is a free Lagrangian immersion representing the immersed Lagrangians submanifold $\zcal(y)$ for all $y\in Y$. The map $\zcal$ is called \textit{smooth} (resp. $C^{k,\alpha}$) if it admits a \textit{smooth (resp. $C^{k,\alpha}$) lifting}.
    \end{definition}

    Theorem~\ref{thm:HWB-tangentspacetospeciallagrangianswithboundary-2} asserts that the moduli space $\scal\lcal (X,L; \Lambda_1,\dots, \Lambda_d)$ is a smooth manifold. But Definition \ref{def:smoothpathoflagrangians} gives a different definition of smooth maps in $\scal\lcal$. It turns out that both definitions are compatible by Corollary~\ref{corollary:HWB-lifted-local-coordinate-chart} which constructs a local coordinate chart on the smooth moduli space from Theorem~\ref{thm:HWB-tangentspacetospeciallagrangianswithboundary-2}, which is smooth in the sense of \eqref{def:smoothpathoflagrangians}.
    
     \begin{corollary}
         \label{corollary:HWB-lifted-local-coordinate-chart}
         Fix a free immersed special Lagrangian $\zcal_0 \in \scal\lcal(X,L; \Lambda_1,\dots, \Lambda_d)$ and a free special Lagrangian immersion $f_0:L \rightarrow X$ representing it. Also fix a basis
         \[\theta_1,\dots \theta_m  \in \ker d_0 F = Z^1_D(L) \cap cZ^1(L)\]
         of harmonic fields with Dirichlet boundary conditions on $L$, with respect to the pullback metric $f_0^*g$.  Let $t_1,\dots, t_m$ denote coordinates on $\RR^m$. There also a simply-connected domain $0\in U \subset \RR^m$ and a smooth map
         \[f: U\times L \rightarrow X \]
         satisfying the following:
         \begin{enumerate}[label={(\roman*)}]
             \item $f(t,\cdot):L \rightarrow X$ is a free special Lagrangian immersion with boundary in $\Lambda_1,\dots, \Lambda_d$ for all $t \in U$,
             \item $f(0,\cdot) = f_0 $, the given immersion representing $\zcal_0$, and
             \item  $\displaystyle{f_0^* \left( \left. i_{\partial_{i} f}  \omega \right|_{t=0} \right) = \theta_i} $ for $i =1,\dots, m$, where $\partial_i f := \frac{\partial f}{ \partial t_i}$.
         \end{enumerate}
         Consequently the map $t \mapsto [f_t]$ defines a smooth diffeomorphism (Definition~\ref{def:smoothpathoflagrangians}) from $U\subset \RR^m$ to some neighborhood $\ucal \subset \scal\lcal (X,L; \Lambda_1, \dots, \Lambda_d)$ of $\zcal_0$ mapping $0\in U$ to $\zcal_0 \in \ucal$.
     \end{corollary}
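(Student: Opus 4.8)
The plan is to deduce the corollary from the implicit function theorem argument underlying Theorem~\ref{thm:HWB-tangentspacetospeciallagrangianswithboundary-2} in the prequel \cite{vasanth-hitchin-1}, and to extract from that argument both a \emph{jointly} smooth family of immersions and the realization of the prescribed tangent directions. Recall the setup of that proof: near $f_0$ one parametrizes immersions with boundary conditions $\Lambda_1,\dots,\Lambda_d$ by deformation data in a Hölder Banach space, identifies an infinitesimal deformation $V$ with the $1$-form $f_0^*(i_V\omega)$ (which vanishes on $\partial L$ under the Solomon--Yuval boundary conditions, hence lies in $Z^1_D(L)$), and encodes the special Lagrangian condition $f^*\omega = f^*\Im(\Omega)=0$ as the vanishing of a smooth nonlinear operator $F$ with $F(0)=0$. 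The prequel's unobstructedness result is precisely that $d_0F$ is Fredholm and surjective with $\ker d_0F = Z^1_D(L)\cap cZ^1(L)$; I will take this as given.

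First I would fix a bounded splitting of the deformation space as $\ker d_0F \oplus W$ with $W$ a closed complement on which $d_0F$ restricts to an isomorphism onto the target. Writing a deformation datum as $\alpha+w$ with $\alpha\in\ker d_0F$, $w\in W$, the implicit function theorem solves $F(\alpha+w)=0$ uniquely as $w=g(\alpha)$ for $\alpha$ near $0$, with $g$ smooth, $g(0)=0$, and $d_0g=0$. The basis $\theta_1,\dots,\theta_m$ identifies $\ker d_0F\cong\RR^m$ by $t\mapsto \alpha(t):=\sum_i t_i\theta_i$, and I define $f(t,\cdot)$ to be the immersion determined by the datum $\alpha(t)+g(\alpha(t))$, say $f(t,p)=\exp_{f_0(p)}\!\big(V_t(p)\big)$ for the deformation field $V_t$ corresponding to this datum under the fixed identification. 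On a simply-connected domain $0\in U\subset\RR^m$ this produces the desired $f:U\times L\to X$.

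Next I would verify the three properties. Property (ii) is immediate, since $t=0$ gives the zero datum. Property (i) holds because $f(t,\cdot)$ solves $F=0$ and hence is special Lagrangian with the correct boundary conditions, while freeness is an open condition about the free immersion $f_0$ and so persists after shrinking $U$. For property (iii), differentiating $t\mapsto \alpha(t)+g(\alpha(t))$ at $t=0$ and using $d_0g=0$ shows the infinitesimal deformation in the $t_i$-direction is exactly $\alpha'(0)\cdot e_i=\theta_i$; unwinding the identification $V\mapsto f_0^*(i_V\omega)$ then yields $f_0^*\big(i_{\partial_i f}\omega|_{t=0}\big)=\theta_i$.

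Finally, the parametrization $t\mapsto \alpha(t)+g(\alpha(t))$ is (a reparametrization of) the very chart the implicit function theorem produces for the smooth manifold $\scal\lcal$; its differential at $0$ sends $\partial_{t_i}$ to the class of $\theta_i$, a basis of $T_{\zcal_0}\scal\lcal$ by Theorem~\ref{thm:HWB-tangentspacetospeciallagrangianswithboundary-2}, so after shrinking $U$ the inverse function theorem makes $t\mapsto[f_t]$ a diffeomorphism onto a neighborhood $\ucal$ of $\zcal_0$; it is simultaneously smooth in the sense of Definition~\ref{def:smoothpathoflagrangians} because $f$ is a smooth lifting, which is exactly the compatibility of the two notions of smoothness asserted in the corollary. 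The main obstacle I anticipate is not this formal step but the passage from the Banach (Hölder) category to genuine joint $C^\infty$-smoothness of $f$ in $(t,p)$: one must bootstrap each solution to be $C^\infty$ in $p$ by elliptic regularity and, more delicately, argue that the solution map $\alpha\mapsto g(\alpha)$ is smooth into $C^{k,\alpha}$ for every $k$, so that $f:U\times L\to X$ is smooth jointly rather than merely in $p$ for fixed $t$. A secondary point is to confirm that the normal-deformation/$1$-form identification respects the Dirichlet boundary condition throughout the family, so that the forms in (iii) genuinely land in $Z^1_D(L)$.
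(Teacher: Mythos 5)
The paper gives no argument here: it defers entirely to the prequel's Corollary~4.9, and your proposal reconstructs exactly the implicit-function-theorem argument (splitting the deformation space as $\ker d_0F \oplus W$, solving for the complement, and differentiating at $0$ to realize the prescribed $\theta_i$) that underlies that cited result. Your approach is correct and is essentially the intended one, and you rightly flag the only delicate point --- joint $C^\infty$ smoothness of $f$ in $(t,p)$ via elliptic bootstrapping of the H\"older-space solution map --- which is precisely what the prequel's proof must supply.
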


     \begin{proof}
         The reader is referred to the prequel for a proof \cite[Corollary~4.9]{vasanth-hitchin-1}.
     \end{proof}

    Akveld--Salamon identified the tangent vector to a smooth path of Lagrangian immersions $f_t:L \rightarrow X$, $t\in (-\epsilon, \epsilon)$, with the 1-form \eqref{eq:HWB-formula-for-derivative-of-path-of-Lagrangians} \cite[Lemma~2.1]{Akveld2001}. The tangent 1-form features in Proposition~\ref{prop:HWB-tangentspacetospeciallagrangianswithboundary-1} and is also in the definition of the Relative Lagrangian flux \eqref{eq:HWB-intro-rel-flux}.

  \begin{lemma}
  \label{lemma:HWB-formula-for-derivative-of-path-of-Lagrangians}
    Consider a smooth map $f:(-\epsilon , \epsilon) \times L \rightarrow X$ such that $f_t:=f(t,\cdot) $ is a Lagrangian immersion with boundary conditions $\Lambda_1,\dots, \Lambda_d$ for each $t\in (-\epsilon, \epsilon)$ (Definition~\ref{def:HWB-boundary-conditions-on-immersions-and-lagrangians}). Then
      \begin{equation}
      \label{eq:HWB-formula-for-derivative-of-path-of-Lagrangians}
          \theta_t :=  f_t^* \left(  i_{ \frac{df_t}{dt}} \omega \right)
      \end{equation}
      is a well defined closed 1-form satisfying $\theta_t|_{\partial L} \equiv 0$.
  \end{lemma}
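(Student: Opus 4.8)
The plan is to deduce all three assertions from the single closed $2$-form obtained by pulling back $\omega$ along the total map $f$ on the product $(-\epsilon,\epsilon)\times L$.

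First, to see that $\theta_t$ is well defined I would unwind the notation. The velocity $\frac{df_t}{dt}$ is a section of $f_t^*TX$, so $i_{\frac{df_t}{dt}}\omega$ lives only along $f_t$; its pullback is nonetheless a genuine smooth $1$-form on $L$, given at $p\in L$ and $w\in T_pL$ by the intrinsic formula
\[
\theta_t(w) \;=\; \omega\!\left(\tfrac{df_t}{dt}(p),\, df_t(w)\right),
\]
whose smoothness is immediate from that of $f$.

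For closedness, set $\tilde\omega := f^*\omega$, a closed $2$-form on $(-\epsilon,\epsilon)\times L$ since $d\omega = 0$. Let $\iota_t:L\to(-\epsilon,\epsilon)\times L$ be $p\mapsto(t,p)$, so that $f\circ\iota_t = f_t$ and the Lagrangian hypothesis gives $\iota_t^*\tilde\omega = f_t^*\omega = 0$ for \emph{every} $t$. Hence the purely $L$-directional part of $\tilde\omega$ vanishes and $\tilde\omega = dt\wedge\theta_t$, where one checks $i_{\partial_t}\tilde\omega = \theta_t$ using $f_*\partial_t = \frac{df_t}{dt}$. Expanding $0 = d\tilde\omega = -\,dt\wedge d_L\theta_t$, with $d_L$ the exterior derivative in the $L$ directions, forces $d_L\theta_t = 0$: each $\theta_t$ is closed. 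It matters here that $f_t$ is Lagrangian for all $t$ in the interval, not merely at one value.

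Finally, for the boundary condition I would work one component at a time. Fix $C_i$ with $f_t(C_i)\subset\Lambda_i$ and a point $p\in C_i$. The curve $t\mapsto f_t(p)$ stays in $\Lambda_i$, so $\frac{df_t}{dt}(p)\in T_{f_t(p)}\Lambda_i$; and for $w\in T_pC_i$ the vector $df_t(w)$ is tangent to $f_t(C_i)\subset\Lambda_i$, hence also in $T_{f_t(p)}\Lambda_i$. Since $\Lambda_i$ is Lagrangian, $\omega$ annihilates any pair of vectors tangent to it, so $\theta_t(w)=0$ for all $w\in T_pC_i$; equivalently $j^*\theta_t = 0$ for the inclusion $j:\partial L\hookrightarrow L$. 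This last step is the only place the specific boundary conditions $f(C_i)\subset\Lambda_i$ and the Lagrangian-ness of the $\Lambda_i$ are used, and is where I expect the real content to lie---though it is short; closedness, by contrast, is a formal consequence of the product decomposition of $f^*\omega$, and the only delicate bookkeeping there is the identification $i_{\partial_t}\tilde\omega = \theta_t$.
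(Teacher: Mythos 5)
Your proposal is correct and follows essentially the same route as the paper: the closedness argument via the decomposition $f^*\omega = dt\wedge\theta_t$ on the product is just a repackaging of the Cartan-formula computation $0 = \frac{d}{dt}(f_t^*\omega) = d\theta_t + f_t^*i_{\frac{df_t}{dt}}d\omega$ used in the paper, relying on the same two inputs ($d\omega=0$ and $f_t^*\omega\equiv 0$ for all $t$). The boundary argument --- both $\frac{df_t}{dt}(p)$ and $df_t(w)$ lie in $T_{f_t(p)}\Lambda_i$, on which $\omega$ vanishes --- is identical to the paper's.
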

  
   \begin{proof}~
       By Cartan's formula for the Lie derivative,
       \begin{equation}
           \label{eq:HWB-formula-for-derivative-of-path-of-Lagrangians-eq1}
           \theta = df_t^* i_{\frac{df_t}{dt}} \omega = { \frac{d}{dt} } (f_t^* \omega) - f_t^*i_{ \frac{df_t}{dt} } d\omega. 
       \end{equation}
       Since $\omega$ is closed, $d\omega = 0$. Since $f_t:L \rightarrow X$ is a Lagrangian immersion, $f_t^*\omega = 0$. Thus the RHS of \eqref{eq:HWB-formula-for-derivative-of-path-of-Lagrangians-eq1} vanishes proving that $\theta_t$ is closed. 
       
       Consider a boundary point $p\in \partial L$ and assume WLOG that $p\in C_i$, where $C_i$ is the boundary component of $L$ corresponding to the boundary Lagrangian $\Lambda_i \subset X$. Fix a tangent vector $\xi_p \in T_p\partial L = T_pC_i$. Then
       \begin{equation}
           \label{eq:HWB-formula-for-derivative-of-path-of-Lagrangians-eq2}
           \theta_t (\xi_p) = f_t^*\left(i_{ \frac{df_t}{dt} } \omega \right) (\xi_p) = \omega \left( \frac{df_t(p)}{dt} , df_t(\xi_p) \right).
       \end{equation}
       By Definition \ref{def:HWB-boundary-conditions-on-immersions-and-lagrangians}, $f_t(C_i) \subset \Lambda_i$ for all $t\in (-\epsilon ,\epsilon)$, and in particular $f_t(p) \in \Lambda_i$ for all $t\in (-\epsilon, \epsilon)$. Then
       \begin{equation}
           \label{eq:HWB-formula-for-derivative-of-path-of-Lagrangians-eq3}
           \frac{df_t(p)}{dt}  \in T_{f_t(p)}\Lambda_i \quad \&\quad df_t(\xi_p) \in T_{f_t(p)} \Lambda_i.
       \end{equation}
       But $\Lambda_i \subset X$ is Lagrangian by definition and so $\Omega|_{\Lambda_i} \equiv 0$. Combining this with \eqref{eq:HWB-formula-for-derivative-of-path-of-Lagrangians-eq2} and \eqref{eq:HWB-formula-for-derivative-of-path-of-Lagrangians-eq3}, conclude that
       \begin{equation}
           \theta_t(\xi_p) = 0 \quad \forall\ p\in \partial L, \xi_p \in T_pL, \quad \implies \quad \theta_t|_{\partial L} \equiv 0. \tag*{qedhere}
       \end{equation}
   \end{proof}

As discussed in \S\ref{section:HWB-results}, $\omega$ and $\Im(\Omega)$ are expected to play dual roles in the theory by an observation of Hitchin \cite[{\S}2]{hitchin1997moduli}. Analogous to the Akveld--Salamon tangent 1-form \eqref{eq:HWB-formula-for-derivative-of-path-of-Lagrangians} is the dual $(n-1)$-form obtained by swapping $\Im(\Omega)$ for $\omega$.

\begin{lemma}
\label{lemma:HWB-phit-is-closed}
Consider a smooth map $f:(-\epsilon , \epsilon) \times L \rightarrow X$ such that $f_t:=f(t,\cdot) \in C^\infty(L, X; \Lambda_1,\dots, \Lambda_d)$ (Definition~\ref{def:HWB-boundary-conditions-on-immersions-and-lagrangians}), and $f_t^* \Im(\Omega) \equiv 0$ for each $t\in (-\epsilon, \epsilon)$. Define
\begin{equation}
    \label{eq:HWB-formula-for-hodge-dual-of-derivative-of-path-of-Lagrangians}
    \phi_t := f_t^* \left( i_{\frac{df_t}{dt}} \Im(\Omega) \right) \qquad t\in (-\epsilon, \epsilon). 
\end{equation}
Then $\phi_t$ is a closed $(n-1)$-form.
\end{lemma}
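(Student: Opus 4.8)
The plan is to mirror the proof of Lemma~\ref{lemma:HWB-formula-for-derivative-of-path-of-Lagrangians} essentially verbatim, replacing the symplectic form $\omega$ by the calibration form $\Im(\Omega)$ throughout. Inspecting that proof, the closedness of $\theta_t$ rested on exactly two inputs: that the ambient form is $d$-closed, and that its pullback vanishes identically along the path. The present lemma is engineered so that both inputs have clean counterparts, and no further geometry is needed.

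First I would record the two structural facts. Since $\Omega$ is a holomorphic top form it is in particular $d$-closed, so $d\Omega = 0$ and hence $d\,\Im(\Omega) = \Im(d\Omega) = 0$; this is the analog of $d\omega = 0$. Second, the vanishing $f_t^*\Im(\Omega) \equiv 0$ is now imposed as a hypothesis of the lemma, playing the role that the Lagrangian condition $f_t^*\omega = 0$ played before (here it is assumed outright rather than deduced from a geometric constraint).

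With these in hand, the core step is the same Cartan-formula identity used to derive \eqref{eq:HWB-formula-for-derivative-of-path-of-Lagrangians-eq1}, namely the interchange $\frac{d}{dt} f_t^*\alpha = f_t^*\bigl(d\, i_{V_t}\alpha + i_{V_t}\, d\alpha\bigr)$ for the time-dependent velocity field $V_t := \frac{df_t}{dt}$ and any ambient form $\alpha$. Applying this with $\alpha = \Im(\Omega)$ and using that pullback commutes with $d$ yields
\[
d\phi_t = \frac{d}{dt}\bigl(f_t^*\Im(\Omega)\bigr) - f_t^*\bigl(i_{V_t}\, d\,\Im(\Omega)\bigr).
\]
The first term vanishes because $f_t^*\Im(\Omega)\equiv 0$ is constant in $t$, and the second vanishes because $d\,\Im(\Omega) = 0$. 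Hence $d\phi_t = 0$, which is exactly the assertion.

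I expect no genuine obstacle, since the argument is structurally identical to Lemma~\ref{lemma:HWB-formula-for-derivative-of-path-of-Lagrangians}. The only points warranting a sentence of care are, first, justifying that the Cartan-type identity above applies to a bona fide family of immersions $f_t$ rather than to the flow of a single vector field — this is the standard fact that the $t$-derivative of a pullback equals the pullback of the Lie derivative along the velocity field — and, second, observing that in contrast to the symplectic case \emph{no} boundary-vanishing statement for $\phi_t$ is claimed. The latter is the reason the proof terminates earlier than that of Lemma~\ref{lemma:HWB-formula-for-derivative-of-path-of-Lagrangians}: the Lagrangian boundary condition $\Omega|_{\Lambda_i}\equiv 0$, which forced $\theta_t|_{\partial L}\equiv 0$ there, is simply not invoked here.
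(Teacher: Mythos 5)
Your argument is exactly the paper's: both apply Cartan's formula to $\Im(\Omega)$ along the velocity field, then kill the two resulting terms using $d\,\Im(\Omega)=0$ (since $\Omega$ is holomorphic, hence closed) and the hypothesis $f_t^*\Im(\Omega)\equiv 0$. The proposal is correct and essentially identical to the paper's proof, including the observation that no boundary-vanishing claim is made for $\phi_t$.
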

\begin{proof}
By Cartan's formula for the Lie derivative,
\[\frac{d}{dt} f_t^*\Im(\Omega) = f_t^*\left(i_{\frac{df_t}{dt}} d \ \Im(\Omega) + d i_{\frac{df_t}{dt}}\ \Im(\Omega) \right),\] \begin{align*}
    \implies \qquad d\phi_t = d\, f_t^*\left(i_{\frac{df_t}{dt}} \Im(\Omega) \right) =&\ f_t^*\left(d\, i_{\frac{df_t}{dt}} \Im(\Omega) \right) \\
    =&\ \frac{d}{dt} f_t^*\Im(\Omega) - f_t^*\left(i_{\frac{df_t}{dt}} d \ \Im(\Omega)  \right)
\end{align*}
But $d \Im(\Omega)=0$ since $\Omega$ is holomorphic and hence closed, and $f_t^*\Im(\Omega) = 0$ for all $t\in[0,1]$ since $f_t$ is a special Lagrangian immersion. Thus $d\phi_t = 0$.
\end{proof}

It was proved in the prequel that the closed forms \eqref{eq:HWB-formula-for-derivative-of-path-of-Lagrangians} and \eqref{eq:HWB-formula-for-hodge-dual-of-derivative-of-path-of-Lagrangians} are Hodge dual to each other.

\begin{lemma}
\label{prop:HWB-tangentspacetospeciallagrangianswithboundary-1}
{\rm\cite[Lemma~4.2]{vasanth-hitchin-1}} Consider a smooth path of special Lagrangian immersions with Lagrangian boundary conditions (Definition~\ref{def:HWB-boundary-conditions-on-immersions-and-lagrangians})
\[f: [0,1] \times L \rightarrow X, \quad f\in C^\infty,\quad f_t:=f(t,\cdot).\]
Let $\star_t$ denote the Hodge star operator corresponding to the pullback Riemannian metric $g_t = f^*g$ on $L$, and
\begin{equation}
\label{eq:HWB-thm-1-theta-phi-notation}
    \begin{aligned}
    \theta_t:=&\ f_t^* \left( i_{\frac{df_t}{dt}}\omega \right) \\
    \phi_t :=&\ f_t^* \left( i_{\frac{df_t}{dt}} \Im(\Omega) \right) .
\end{aligned} \qquad t\in [0,1],
\end{equation}
Then $\star_t\theta_t = \phi_t$. In particular by Lemmas~\ref{lemma:HWB-formula-for-derivative-of-path-of-Lagrangians} and \ref{lemma:HWB-phit-is-closed}, $\theta_t$ and $\phi_t$ are closed and co-closed with respect to the pullback metric $f_t^* g$, and are harmonic.
\end{lemma}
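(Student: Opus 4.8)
The plan is to reduce the identity $\star_t\theta_t = \phi_t$ to a pointwise statement on each tangent space $T_pL$, since the Hodge star is a fiberwise operator. Fix $t$ and $p\in L$, and decompose the velocity $\frac{df_t}{dt}$ at $f_t(p)$ into its components tangent and normal to the immersed submanifold $f_t(L)$. Writing the tangential part as $df_t(W)$ for a vector $W\in T_pL$, naturality of the interior product under pullback gives $f_t^*(i_{df_t(W)}\omega) = i_W(f_t^*\omega)$ and $f_t^*(i_{df_t(W)}\Im(\Omega)) = i_W(f_t^*\Im(\Omega))$, both of which vanish because $f_t$ is special Lagrangian ($f_t^*\omega = 0$ and $f_t^*\Im(\Omega)=0$). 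Since the interior product is linear in the vector, neither $\theta_t$ nor $\phi_t$ sees the tangential part, and I may assume $V := \frac{df_t}{dt}$ is normal to $f_t(L)$.

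Next I would choose a frame adapted to the Calabi--Yau structure at the point $q = f_t(p)$. Because $f_t(L)$ is Lagrangian, $J$ maps $T_q f_t(L)$ isomorphically onto its orthogonal complement, so there is a $g$-orthonormal basis $e_1,\dots,e_n$ of $df_t(T_pL)$ for which $Je_1,\dots,Je_n$ is an orthonormal basis of the normal space; in the associated adapted complex coordinates $z_k = x_k + \sqrt{-1}\,y_k$ one has $\omega = \sum_k dx_k\wedge dy_k$ and $\Omega = dz_1\wedge\cdots\wedge dz_n$, with $T_q f_t(L)$ the real plane $\{y=0\}$. Writing the normal vector as $V = \sum_k a_k\,\partial_{y_k}$, a direct contraction gives $f_t^*(i_V\omega) = -\sum_k a_k\,dx_k$, while expanding $i_V\Omega$ and restricting to $\{y=0\}$ (where every $dy_j$ pulls back to zero) isolates the imaginary part $f_t^*(i_V\Im(\Omega)) = \sum_k (-1)^{k-1} a_k\, dx_1\wedge\cdots\wedge\widehat{dx_k}\wedge\cdots\wedge dx_n$. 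Comparing with $\star(dx_k) = (-1)^{k-1}\,dx_1\wedge\cdots\wedge\widehat{dx_k}\wedge\cdots\wedge dx_n$ for the flat metric $\sum_k dx_k^2 = f_t^*g$ in this frame exhibits $\theta_t$ and $\phi_t$ as Hodge dual at $p$.

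The one genuinely delicate point is the sign, i.e.\ ensuring the duality reads $\star_t\theta_t = +\phi_t$ rather than $-\phi_t$; indeed the naive contraction above produces the two forms up to a sign that depends on conventions. Pinning it down is a matter of matching three choices: the sign in $g = \omega(\cdot,J\cdot)$, the orientation convention for $J$ (equivalently whether $J\partial_{x_k} = +\partial_{y_k}$), and---most importantly---the calibration identity $\Re(\Omega)|_{f_t(L)} = \vol_{g_t}$, which fixes the orientation of $L$ compatibly with the special Lagrangian phase being zero (the content of $f_t^*\Im(\Omega)=0$). With the orientation of $L$ chosen so that $\Re(\Omega)$ restricts to the positive volume form, the signs align and $\star_t\theta_t = \phi_t$ holds pointwise, hence globally. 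The harmonicity assertion is then immediate: $\theta_t$ is closed by Lemma~\ref{lemma:HWB-formula-for-derivative-of-path-of-Lagrangians} and $\phi_t = \star_t\theta_t$ is closed by Lemma~\ref{lemma:HWB-phit-is-closed}, so $d\star_t\theta_t = 0$ shows $\theta_t$ is co-closed and therefore harmonic, and the same two facts give that $\phi_t$ is co-closed, hence harmonic as well.
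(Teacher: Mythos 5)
The paper itself gives no proof of this lemma---it is imported verbatim from the prequel \cite[Lemma~4.2]{vasanth-hitchin-1}---so there is no in-text argument to compare against line by line. Your strategy (discard the tangential component of $\tfrac{df_t}{dt}$ using $f_t^*\omega=0$ and $f_t^*\Im(\Omega)=0$, pass to an adapted unitary frame at a point where the Lagrangian condition makes $J$ carry $df_t(T_pL)$ onto its orthogonal complement, and contract explicitly) is the standard McLean/Hitchin pointwise computation and is surely the substance of the cited proof. The reduction to the normal component, the choice of frame, the use of the normalization \eqref{eq:HWB-calabi-Yau-equation} to write $\Omega=e^{\sqrt{-1}\vartheta}dz_1\wedge\cdots\wedge dz_n$ with the phase killed by $f_t^*\Im(\Omega)=0$, and the harmonicity conclusion at the end are all correct.

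The gap sits exactly at the point you yourself flag as delicate: the sign does not close in your write-up. In the frame you construct, $\Re(\Omega)|_{\{y=0\}}=dx_1\wedge\cdots\wedge dx_n=\vol_{g_t}$, so the orientation you propose to invoke is already in force; it cannot be used a second time to flip a sign. With that orientation, your own displayed formulas $\theta_t=-\sum_k a_k\,dx_k$ and $\phi_t=\sum_k(-1)^{k-1}a_k\,dx_1\wedge\cdots\wedge\widehat{dx_k}\wedge\cdots\wedge dx_n$, together with $\star dx_k=(-1)^{k-1}dx_1\wedge\cdots\wedge\widehat{dx_k}\wedge\cdots\wedge dx_n$, combine to give
\[
\star_t\theta_t \;=\; -\sum_k a_k\,\star dx_k \;=\; -\sum_k(-1)^{k-1}a_k\,dx_1\wedge\cdots\wedge\widehat{dx_k}\wedge\cdots\wedge dx_n \;=\; -\phi_t,
\]
not $+\phi_t$; the case $n=1$ makes this transparent ($\theta=-a\,dx$, $\phi=a$, $\star\theta=-a$). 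So either there is a genuinely compensating convention to be located (for instance the opposite calibration $\Re(\Omega)|_{L}=-\vol_{g_t}$, or a contraction or metric convention in the prequel differing from $g=\omega(\cdot,J\cdot)$ and $\theta_t=f_t^*(i_{\frac{df_t}{dt}}\omega)$ as written here), or the identity with these conventions reads $\star_t\theta_t=-\phi_t$. As it stands your argument establishes the Hodge duality only up to sign and then asserts, rather than derives, the sign claimed in the statement; the last paragraph (closedness of $\star_t\theta_t$ forcing co-closedness and harmonicity of both forms) is unaffected, since it only uses the identity up to sign.
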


 \subsection{\texorpdfstring{$L^2$ metric on the moduli space}{L\^2 metric on the moduli space}}

The moduli space $\scal\lcal := \scal\lcal(X,L, \Lambda_1,\dots, \Lambda_d)$ admits a natural $L^2$ inner product.

\begin{definition}
\label{def:HWB-L2-iner-product}
Fix a point $\zcal_0 \in \scal\lcal$ and a special Lagrangian immersion $f_0:L \rightarrow X$ representing $\zcal_0$. By Corollary~\ref{corollary:HWB-lifted-local-coordinate-chart}, every smooth path $\zcal:(-\epsilon, \epsilon) \rightarrow \scal\lcal$ admits smooth lifting $f:(-\epsilon, \epsilon) \times L \rightarrow X$, such that $f(0,\cdot) = f_0$ is the fixed immersion, and $f_t:= f(t,\cdot)$ is a special Lagrangian immersions with boundary conditions $\Lambda_1,\dots, \Lambda_d$ representing $\zcal_t$ for all $t\in (-\epsilon, \epsilon)$.
\begin{enumerate}
    \item Fix a smooth path $\zcal:(-\epsilon, \epsilon) \rightarrow \scal\lcal$. The \textit{tangent 1-form to the path $\zcal_t$ at $\zcal_0$ with respect to lifting $f_0:L \rightarrow X$ of $\zcal_0$} is the 1-form
    \[\theta_{\zcal, f_0} := f_0^* \left( i_{\left. \frac{df_t}{dt} \right|_{t=0}} \omega \right).\]
    Note that $\theta_{\zcal, f_0}$ is a closed 1-form vanishing on $\partial L$ by Lemma~\ref{lemma:HWB-formula-for-derivative-of-path-of-Lagrangians}.
    
    \item Let $\zcal, \widetilde{\zcal}:(-\epsilon, \epsilon)$ be two paths through the same base point $\zcal_0 = \widetilde{\zcal}_0$ with smooth liftings $f_t,\widetilde{f_t}:L \rightarrow X$ through the same immersion $f_0 = \widetilde{f_0}$ representing $\zcal_0 = \widetilde{\zcal}_0$. The $L^2$ inner product of corresponding tangent 1-forms defines an inner product on $T_{\zcal_0} \scal \lcal$ 
    \[ \left\langle \dot{\zcal}(0), \dot{\widetilde{\zcal}}(0) \right\rangle_{\scal\lcal} :=  \langle \theta_{\zcal, f_0}, \theta_{\widetilde{\zcal}, f_0} \rangle_{L^2(f_0^* g)} = \int_L f_0^*g(\theta_{\zcal, f_0}, \theta_{\widetilde{\zcal}, f_0}) d\vol_{f_0^*g}. \]
    $\left\langle \cdot, \cdot \right\rangle_{\scal\lcal} $ is independent of the choice $f_0:L \rightarrow X$ by Proposition~\ref{prop:HWB-L2-inner-prod-well-defined}, making it well-defined.
\end{enumerate}
\end{definition}

\begin{proposition}
\label{prop:HWB-L2-inner-prod-well-defined}
    The $L^2$ inner product from Definition~\ref{def:HWB-L2-iner-product}.2 is independent of the choice of lifting $f_0:L \rightarrow X$.
\end{proposition}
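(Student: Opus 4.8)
The plan is to exploit the naturality of every ingredient under the reparametrization action of $\Diff(L)$, thereby reducing the claim to the elementary diffeomorphism-invariance of the $L^2$ pairing on differential forms. The only genuine computation is the transformation law for the tangent $1$-form; the rest is formal.

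First I would record that any two special Lagrangian immersions $f_0, f_0':L\to X$ representing the same class $\zcal_0\in\scal\lcal$ differ by a reparametrization: by Definition~\ref{def:immersionsandimmersedforms} there is $\psi\in\Diff(L)$ with $f_0' = f_0\circ\psi$. Given a smooth path $\zcal:(-\epsilon,\epsilon)\to\scal\lcal$ and a lifting $f$ with $f_0 = f(0,\cdot)$, I would produce a compatible lifting based at $f_0'$ by setting $f_t' := f_t\circ\psi$. Since $\psi\in\Diff(L)$, each $f_t'$ is again a free special Lagrangian immersion representing the same $\zcal_t$, and $f'$ is smooth in $(t,p)$ with $f_0' = f_0\circ\psi$, so $f'$ is an admissible lifting in the sense of Definition~\ref{def:smoothpathoflagrangians}. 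The same $\psi$ is used to adjust the liftings of both paths $\zcal$ and $\widetilde\zcal$.

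The key step is the transformation rule for the tangent $1$-form. Since $\psi$ is independent of $t$, for each $p\in L$ one has $\frac{df_t'}{dt}(p) = \frac{df_t}{dt}(\psi(p))$ and $d(f_t')_p = d(f_t)_{\psi(p)}\circ d\psi_p$. Substituting these into \eqref{eq:HWB-formula-for-derivative-of-path-of-Lagrangians} and unwinding the interior product and the pullback gives, at each $t$, the identity $(f_t')^*\big(i_{df_t'/dt}\,\omega\big) = \psi^*\big(f_t^*(i_{df_t/dt}\,\omega)\big)$; specializing to $t=0$ this reads
\[
\theta_{\zcal, f_0'} \;=\; \psi^*\theta_{\zcal, f_0}, \qquad \theta_{\widetilde\zcal, f_0'} \;=\; \psi^*\theta_{\widetilde\zcal, f_0}.
\]
Simultaneously the pullback metric is natural, $(f_0')^*g = (f_0\circ\psi)^*g = \psi^*(f_0^*g)$. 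This is the one computation that requires care, and I expect the bookkeeping here—together with checking that $f_t\circ\psi$ is an admissible lifting—to be the main (modest) obstacle.

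Finally I would feed these rules into the integral defining the pairing. Writing $g_0 = f_0^*g$, the pointwise inner product of $1$-forms and the Riemannian volume density are natural under pullback, so $(\psi^*g_0)(\psi^*\alpha,\psi^*\beta) = g_0(\alpha,\beta)\circ\psi$ and $d\vol_{\psi^*g_0} = \psi^*(d\vol_{g_0})$ as densities. Hence the integrand built from $f_0'$ is exactly the $\psi$-pullback of the integrand built from $f_0$, and the change-of-variables formula for a diffeomorphism of $L$ yields
\[
\langle \theta_{\zcal, f_0'}, \theta_{\widetilde\zcal, f_0'}\rangle_{L^2((f_0')^*g)} \;=\; \langle \theta_{\zcal, f_0}, \theta_{\widetilde\zcal, f_0}\rangle_{L^2(f_0^*g)}.
\]
Because the $L^2$ pairing is assembled from the metric alone—via the positive volume density rather than a choice of orientation—no sign ambiguity enters even if $\psi$ reverses orientation, which matters since $\Diff(L)$ is only required to preserve boundary components. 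This establishes independence of the base immersion $f_0$. I would also note in passing that $\theta_{\zcal,f_0}$ depends only on $\dot\zcal(0)$ and $f_0$ (not on the full lifting with that base), precisely because $f_0^*\omega = 0$ kills the correction term coming from a reparametrization tangent to the fibre; this secures the well-definedness presupposed in Definition~\ref{def:HWB-L2-iner-product}.
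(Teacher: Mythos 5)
Your proposal is correct and follows essentially the same route as the paper: reduce to the transformation law $\theta_{\zcal,f_0\circ\psi}=\psi^*\theta_{\zcal,f_0}$ and then invoke naturality of the pullback metric, volume form, and change of variables under $\psi\in\Diff(L)$. The only organizational difference is that the paper's Lemma~\ref{lemma:HWB-akveld-salamon-change-of-lifting} treats a general time-dependent reparametrization $\psi_t$ at once, using $f_t^*\omega=0$ to kill the correction term $df_t(d\psi_t/dt)$, whereas you work with the constant-$\psi$ lifting $f_t\circ\psi$ (where no correction term arises) and defer the $f_t^*\omega=0$ argument to your closing remark on well-definedness of $\theta_{\zcal,f_0}$ --- the same ingredient, deployed in the same place it is needed.
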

\begin{proof}
\begin{lemma}
\label{lemma:HWB-akveld-salamon-change-of-lifting}
    Let $f_t,\overline{f}_t:L \rightarrow X$ be smooth paths lifting a given smooth path $\zcal: (-\epsilon, \epsilon) \rightarrow \scal\lcal$. There exists some $\psi_t \in \Diff(L)$ such that $\overline{f}_t = f_t$ for all $t\in (-\epsilon, \epsilon)$. Then
    \[\theta_{\zcal_0, \overline{f}_0} = \psi_0^* \theta_{\zcal_0, {f}_0}\]
\end{lemma}
\begin{proof}
    Observe that 
       \[\frac{d\overline{f}_t}{dt} = \frac{df_t}{dt}\circ \psi_t + df_t(\frac{d\psi_t}{dt}).\]
       Since $f_t$ is a Lagrangian immersion $f_t^*\omega = 0$, and so for any $\xi \in TL$,
      \[\begin{aligned}
          \overline{f}_t^* i_{\frac{d\overline{f}_t}{dt}} \omega (\xi) =&\  \overline{f}_t^* i_{\frac{df_t}{dt}\circ \psi_t} \omega (\xi) + \overline{f_t}^* i_{df_t(\frac{d\psi_t}{dt})}\omega (\xi) \\
          =&\ \overline{f}_t^* i_{\frac{df_t}{dt}\circ \psi_t} \omega (\xi) + \omega\left(df_t(\frac{d\psi_t}{dt}), d\overline{f}_t(\xi) \right) \\
          =&\ \overline{f}_t^* i_{\frac{df_t}{dt}\circ \psi_t} \omega (\xi) + \omega\left(df_t(\frac{d\psi_t}{dt}), d{f}_t \circ d\psi_t(\xi) \right) \\
          =&\  \overline{f}_t^* i_{\frac{df_t}{dt}\circ \psi_t} \omega (\xi) + (f_t^*\omega)\left(\frac{d\psi_t}{dt}, d\psi_t(\xi) \right) \\
          =&\ \overline{f}_t^* i_{\frac{df_t}{dt}\circ \psi_t} \omega (\xi),
      \end{aligned}
      \]
      \begin{equation}
          \implies \overline{f}_0^*\left(\left.i_{\frac{d\overline{f}_t}{dt}} \omega\right|_{t=0} \right) = \psi_0^* f_0^*\left(\left.i_{\frac{df_t}{dt}} \omega\right|_{t=0} \right). \tag*{qedhere}
      \end{equation}
\end{proof}
Let $\psi_0 \in \Diff(L)$ and let $ \bar{f}_0 = f_0 \circ \psi_0$ denote another representative of the same immersed Lagrangian $\zcal_0$. Then by Lemma~\ref{lemma:HWB-akveld-salamon-change-of-lifting},
    \begin{align*}
        \langle \theta_{\zcal,  \bar{f}_0}, \theta_{\widetilde{\zcal},  \bar{f}_0} \rangle_{L^2(L,  \bar{f}_0^* g)} =&\ \int_L  \bar{f}_0^*g(\theta_{\zcal,  \bar{f}_0}, \theta_{\widetilde{\zcal},  \bar{f}_0}) d\vol_{ \bar{f}_0^*g} \\
        =&\  \int_L \psi_0^*f_0^*g(\psi_0^* \theta_{\zcal, f_0}, \ \psi_0^*\theta_{\widetilde{\zcal}, f_0}) d\vol_{\psi_0^* f_0^*g} \\
        =&\ \int_L \psi_0^*\Big(f_0^*g( \theta_{\zcal, f_0}, \theta_{\widetilde{\zcal}, f_0}) \Big) \psi_0^* d\vol_{f_0^*g} \\
        =&\ \int_L f_0^*g(\theta_{\zcal, f_0}, \theta_{\widetilde{\zcal}, f_0}) d\vol_{f_0^*g} \\
        =&\ \langle \theta_{\zcal, f_0}, \theta_{\widetilde{\zcal}, f_0} \rangle_{L^2(L, f_0^* g)}.\tag*{\qedhere}
    \end{align*}
\end{proof}

\section{Lagrangian flux}
\label{sec:HWB-lagrangian-flux}

This section is concerned with the proofs of Theorems~\ref{thm:HWB-rel-flux-well-defined} and \ref{thm:HWB-dual-flux-well-defined}. The relative and special Lagrangian flux functional are used in Section \ref{sec:HWB-canonicalcoordinatesandaffinestructure} to construct generalise Hitchin's affine structures on the moduli space of special Lagrangians \cite[p.~507-508]{hitchin1997moduli}.

\subsection{Relative Lagrangian flux}
\label{subsection:relativelagrangianflux}

This section deals with the relative Lagrangian flux functional \eqref{eq:HWB-intro-rel-flux} introduced by Solomon--Yuval \cite[{\S}4.2]{SolomonYuval2020-geodesics}. The various claims of Theorem~\ref{thm:HWB-rel-flux-well-defined} are proved as Lemmas in this section, and the Lemmas are combined to prove Theorem~\ref{thm:HWB-rel-flux-well-defined} in Section~\ref{subsec:HWB-proof-of-thm-1-2}.

\begin{lemma}
\label{lemma:HWB-rel-flux-well-defined-lemma-1}
    For any path smooth path $\zcal: [0,1]\rightarrow \lcal(X,L;\Lambda_1,\dots, \Lambda_d)$ and smooth lifting $f:[0,1] \times L \rightarrow X$, the quantity
    \[RF(f):= \left[  \int_0^1 f_t^* i_{\frac{df_t}{dt}} \omega dt \right]\]
    is well defined as an element of $H^1(L, \partial L;\RR)$.
\end{lemma}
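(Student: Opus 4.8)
The plan is to exhibit $RF(f)$ as the class of a genuine relative closed $1$-form in the de Rham model of $H^1(L,\partial L;\RR)$. I would work with the subcomplex
\[\Omega^\bullet(L,\partial L):=\{\alpha\in\Omega^\bullet(L):\iota^*\alpha=0\},\]
where $\iota:\partial L\hookrightarrow L$ is the inclusion. The short exact sequence $0\to\Omega^\bullet(L,\partial L)\to\Omega^\bullet(L)\xrightarrow{\iota^*}\Omega^\bullet(\partial L)\to 0$ (surjectivity of $\iota^*$ via a collar extension) yields a long exact sequence in cohomology matching the topological sequence of the pair, so the cohomology of this subcomplex is canonically $H^\bullet(L,\partial L;\RR)$. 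Hence it suffices to produce a smooth $1$-form $\eta$ on $L$ that is closed and satisfies $\iota^*\eta=0$, and to set $RF(f):=[\eta]$.

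First I would check well-definedness at the level of forms. Since $f:[0,1]\times L\to X$ is smooth, $\tfrac{df_t}{dt}$ is a smooth vector field along $f$, and therefore $\theta_t:=f_t^*\!\left(i_{\frac{df_t}{dt}}\omega\right)$ depends smoothly on $(t,p)\in[0,1]\times L$. Integrating a smooth family over the compact interval $[0,1]$ produces a smooth $1$-form $\eta:=\int_0^1\theta_t\,dt$ on $L$; this is precisely the sense in which the integrand of \eqref{eq:HWB-intro-rel-flux} is well defined before passing to cohomology.

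The two cohomological conditions then reduce to Lemma~\ref{lemma:HWB-formula-for-derivative-of-path-of-Lagrangians} by commuting $\int_0^1(\cdot)\,dt$ past $d$ and past $\iota^*$, both of which differentiate only in the $L$-directions and so pass under the $t$-integral. For each $t$ the immersion $f_t$ is Lagrangian with the prescribed boundary conditions, so Lemma~\ref{lemma:HWB-formula-for-derivative-of-path-of-Lagrangians} gives $d\theta_t=0$ and $\iota^*\theta_t=0$. Consequently
\[d\eta=\int_0^1 d\theta_t\,dt=0,\qquad \iota^*\eta=\int_0^1\iota^*\theta_t\,dt=0,\]
so $\eta\in\Omega^1(L,\partial L)$ is closed and $RF(f)=[\eta]\in H^1(L,\partial L;\RR)$ is a well-defined relative class.

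There is no serious obstacle: all the geometric content is already packaged in Lemma~\ref{lemma:HWB-formula-for-derivative-of-path-of-Lagrangians}. The only points demanding care are the bookkeeping choice of the relative de Rham model (the vanishing-on-the-boundary subcomplex, which fits our situation since $\eta$ literally satisfies $\iota^*\eta=0$) and the routine justification that $d$ and $\iota^*$ interchange with integration over the compact parameter $t$. The deeper assertions of Theorem~\ref{thm:HWB-rel-flux-well-defined}—independence of the lifting $f$ and dependence only on the endpoint-preserving homotopy class of $\zcal$—are not needed here and are handled by the subsequent lemmas.
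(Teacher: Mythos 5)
Your proof is correct and takes essentially the same route as the paper, which likewise deduces well-definedness directly from Lemma~\ref{lemma:HWB-formula-for-derivative-of-path-of-Lagrangians}. You simply make explicit the relative de Rham model and the commutation of $d$ and $\iota^*$ with the $t$-integral, which the paper leaves implicit.
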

\begin{proof}
    $f_t^* i_{\frac{df_t}{dt}} \omega $ is a closed 1-form vanishing on the boundary by Lemma~\ref{lemma:HWB-formula-for-derivative-of-path-of-Lagrangians}.1, so $RF(f)$ is a well defined element of $H^1(L,\partial L; \RR)$.
\end{proof}

An alternate way to characterize the cohomology class $RF(f)$ is by specifying its integral along every $C^1$ curves with end points on the boundary.

\begin{lemma}
    \label{lemma:HWB-formula-for-rel-flux-of-curve}
    Let $\zcal:[0,1]\rightarrow  \lcal(X, L; \Lambda_1,\dots, \Lambda_d)$ be a smooth path and $f : [0,1]\times L \rightarrow X$ respectively be a smooth lifting (Definition \ref{def:smoothpathoflagrangians}). Then for any $C^1$ curve $\gamma:[0,1] \rightarrow L$ satisfying $\gamma(0), \gamma(1) \in \partial L$,
    \[ \int_{\gamma} RF(f) = \int_{[0,1]^2} l_{f, \gamma}^* \omega, \]
    where $l_{f, \gamma}: [0,1]^2 \rightarrow X$ is defined by
    \[ l_{f, \gamma}(t,s) = f(t,\gamma(s)). \]
\end{lemma}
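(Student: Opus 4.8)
The plan is to unwind both sides of the claimed identity into one and the same iterated double integral over $[0,1]^2$, taking advantage of the fact that the definition of $RF(f)$ already hands us an explicit representative $1$-form to integrate.

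First I would fix the distinguished representative of the class $RF(f)$, namely the closed $1$-form $\Theta := \int_0^1 \theta_t\, dt$, where $\theta_t := f_t^* i_{\frac{df_t}{dt}}\omega$ as in Lemma~\ref{lemma:HWB-formula-for-derivative-of-path-of-Lagrangians}. Each $\theta_t$ is closed and vanishes on $\partial L$, hence so does $\Theta$, and $\Theta$ is a smooth $1$-form on $L$ by smoothness of $f$. Because the endpoints of $\gamma$ lie on $\partial L$, the number $\int_\gamma \Theta$ depends only on the relative class $RF(f)\in H^1(L,\partial L;\RR)$: any exact correction $d\eta$ with $\eta|_{\partial L}=0$ contributes $\eta(\gamma(1))-\eta(\gamma(0))=0$ by the fundamental theorem of calculus. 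This legitimises the notation $\int_\gamma RF(f)$ and lets me compute with the single representative $\Theta$.

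Next I would compute the two partial derivatives of $l:=l_{f,\gamma}$, $l(t,s)=f(t,\gamma(s))$. The chain rule gives $\partial_t l(t,s)=\frac{df_t}{dt}(\gamma(s))$ and $\partial_s l(t,s)=df_t(\gamma'(s))$, both viewed as vectors in $T_{l(t,s)}X$. Consequently $l^*\omega=\omega(\partial_t l,\partial_s l)\, dt\wedge ds$, and integrating over the standard-oriented square yields $\int_{[0,1]^2} l^*\omega=\int_0^1\!\!\int_0^1 \omega\bigl(\frac{df_t}{dt}(\gamma(s)),\, df_t(\gamma'(s))\bigr)\, dt\, ds$. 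On the other side, by the definition of the integral of a $1$-form along a curve, $\int_\gamma\Theta=\int_0^1 \Theta_{\gamma(s)}(\gamma'(s))\, ds=\int_0^1\!\!\int_0^1 \theta_t(\gamma'(s))\, dt\, ds$, where the order of integration is exchanged by Fubini since the integrand is continuous in $(t,s)$. Unwinding the interior product exactly as in \eqref{eq:HWB-formula-for-derivative-of-path-of-Lagrangians-eq2} gives $\theta_t(\gamma'(s))=\omega\bigl(\frac{df_t}{dt}(\gamma(s)),\, df_t(\gamma'(s))\bigr)$, which is precisely the integrand obtained from $l^*\omega$.

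Matching the two double integrals then completes the argument. I do not expect a genuine obstacle here: the statement is essentially a bookkeeping identity. The only points requiring care are the correct identification of $\partial_s l$ with $df_t(\gamma'(s))$ — so that the interior-product expression lines up with the pullback $l^*\omega$ with the right orientation $dt\wedge ds$ — and the Fubini interchange; the relative-homology well-definedness recorded in the first step guarantees that the left-hand side is the intended cohomological pairing rather than a representative-dependent quantity.
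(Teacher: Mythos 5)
Your proposal is correct and follows essentially the same route as the paper's proof: both sides are unwound into the iterated double integral $\int_0^1\!\int_0^1 \omega\bigl(\tfrac{df_t}{dt}(\gamma(s)), df_t(\gamma'(s))\bigr)\,dt\,ds$ via the chain rule identifications $\partial_t l = \tfrac{df_t}{dt}(\gamma(s))$ and $\partial_s l = df_t(\gamma'(s))$. Your preliminary observation that $\int_\gamma RF(f)$ depends only on the relative cohomology class (because exact corrections $d\eta$ with $\eta|_{\partial L}=0$ integrate to zero over $\gamma$) is a useful point of care that the paper leaves implicit.
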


\begin{remark}
\label{remark:HWB-formula-for-rel-flux-of-curve}
    Recall that $H^1(L,\partial L; \RR)$ is the vector space dual to the relative homology $H_1(L, \partial L; \RR)$, and that $H^1(L,\partial L; \RR)$ is generated by the classes of $C^1$ curves $\gamma$ on $L$ whose end points lie on the boundary $\partial L$. In particular two elements of $H^1(L, \partial L;\RR)$ are equal if and only if their integral along any such curve $\gamma$ are equal. Lemma~\ref{lemma:HWB-formula-for-rel-flux-of-curve} gives an alternate characterization of $RF(f)$.
\end{remark}

\begin{proof}[Proof of Lemma~\ref{lemma:HWB-formula-for-rel-flux-of-curve}]
    By \eqref{eq:HWB-intro-rel-flux}
    \begin{align*}
        \int_{\gamma} RF(f) =&\ \int_{\gamma} \int_0^1 f_t^* i_{\frac{df_t}{dt}} \omega dt \\
        =&\ \int_{0}^1 \int_0^1 f_t^* i_{\frac{df_t}{dt}} \omega (\gamma'(s))dtds \\
        =&\ \int_{0}^1 \int_0^1 \omega \left( \frac{df_t}{dt}(\gamma(s)),  df_t(\gamma'(s)) \right) dtds \\
        =&\ \int_{0}^1 \int_0^1 \omega \left( \frac{\partial l_{f,\gamma}}{\partial t},  \frac{\partial l_{f,\gamma}}{\partial s}) \right) dtds \\
        =&\ \int_{[0,1]^2} l_{f, \gamma}^* \omega. \tag*{\qedhere}
    \end{align*}
\end{proof}

Lemma~\ref{lemma:HWB-rel-flux-path-homotopy-lemma} is the main step in the proof of Theorem~\ref{thm:HWB-rel-flux-well-defined}, and asserts that the relative Lagrangian flux of a path of immersed Lagrangians with boundary depends only on the end point preserving homotopy class of the path.

\begin{lemma}
\label{lemma:HWB-rel-flux-path-homotopy-lemma}
    Consider two smooth paths $\zcal, \bar{\zcal}:[0,1] \rightarrow \lcal(X, L; \Lambda_1,\dots, \Lambda_d)$ with equal end points, $\zcal_0 = \bar{\zcal}_0$ and $\zcal_1 = \bar{\zcal}_1$, that are smoothly homotopic with fixed end-points. Let $f,\bar{f}: [0,1]\times L \rightarrow X$ respectively be smooth liftings of $\zcal$ and $\bar{\zcal}$ (Definition \ref{def:smoothpathoflagrangians}), and assume that $f_0 = \bar{f}_0$. Then
    \[ RF(\bar{f}) = \left[ \int_0^1 \bar{f}_t^* i_{\frac{d\bar{f}_t}{dt}} \omega dt \right] =  \left[ \int_0^1 f_t^* i_{\frac{df_t}{dt}} \omega dt \right] = RF(f).\]
\end{lemma}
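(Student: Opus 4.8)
The plan is to reduce the equality of the two cohomology classes $RF(f),RF(\bar f)\in H^1(L,\partial L;\RR)$ to a family of scalar identities, one for each relative curve, and then to establish each identity by a single application of Stokes' theorem on a $3$-cube assembled from the homotopy. Concretely, by Remark~\ref{remark:HWB-formula-for-rel-flux-of-curve} it suffices to prove $\int_\gamma RF(f)=\int_\gamma RF(\bar f)$ for every $C^1$ curve $\gamma:[0,1]\to L$ with $\gamma(0),\gamma(1)\in\partial L$, and by Lemma~\ref{lemma:HWB-formula-for-rel-flux-of-curve} this is the same as
\[ \int_{[0,1]^2} l_{f,\gamma}^*\omega = \int_{[0,1]^2} l_{\bar f,\gamma}^*\omega, \]
so the whole statement becomes an identity between the integrals of the closed form $\omega$ over two squares in $X$ that bound a common $3$-chain.

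Next I would build that $3$-chain. Let $H:[0,1]_u\times[0,1]_t\to\lcal$ be the endpoint-preserving homotopy, with $H(0,\cdot)=\zcal$, $H(1,\cdot)=\bar\zcal$, $H(u,0)\equiv\zcal_0$, and $H(u,1)\equiv\zcal_1$. Since $[0,1]^2$ is simply connected, $H$ admits a smooth lifting (Definition~\ref{def:smoothpathoflagrangians}); I would further arrange the lifting $F:[0,1]^2\times L\to X$ to restrict to the given data on three edges, namely $F(0,t,\cdot)=f_t$, $F(1,t,\cdot)=\bar f_t$, and $F(u,0,\cdot)\equiv f_0$. These three prescriptions are mutually consistent at the corners $(0,0)$ and $(1,0)$ precisely because $f_0=\bar f_0$; and since the square deformation retracts onto these three edges, a consistent lift over them extends (after composing a reference lift with a smooth family of reparametrizations in $\Diff(L)$) to a smooth lift $F$ over the whole square. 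This is the one place the hypothesis $f_0=\bar f_0$ enters, and I expect it to be the main obstacle: the flux is genuinely sensitive to the base lifting, so reconciling the $\Diff(L)$ reparametrization ambiguity along the edges is exactly what must be checked with care.

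With $F$ fixed, set $L_\gamma(u,t,s):=F(u,t,\gamma(s))$ on $Q:=[0,1]^3$. Since $\omega$ is closed, $\int_{\partial Q}L_\gamma^*\omega=\int_Q L_\gamma^*(d\omega)=0$, and I would show that four of the six faces contribute nothing. On the $t=0$ face, $F(u,0,\cdot)\equiv f_0$ is independent of $u$, so the face is degenerate and vanishes; on the $t=1$ face, $F(u,1,\cdot)$ runs through representatives of the single immersed Lagrangian $\zcal_1$, so (using freeness to express these as reparametrizations of one another) both $\partial_u L_\gamma$ and $\partial_s L_\gamma$ are tangent to $\zcal_1$ and $\omega$ vanishes there; on the $s=0$ and $s=1$ faces the image lies in the boundary Lagrangians $\Lambda_i,\Lambda_j$ containing $\gamma(0),\gamma(1)$ (Definition~\ref{def:HWB-boundary-conditions-on-immersions-and-lagrangians}), on which $\omega$ again vanishes. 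This is the same vanishing mechanism already used in Lemma~\ref{lemma:HWB-formula-for-derivative-of-path-of-Lagrangians}.

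The only surviving faces are $u=0$ and $u=1$, which, with the induced orientations, carry $-\int_{[0,1]^2}l_{f,\gamma}^*\omega$ and $+\int_{[0,1]^2}l_{\bar f,\gamma}^*\omega$. Hence these integrals agree, so by Lemma~\ref{lemma:HWB-formula-for-rel-flux-of-curve} we get $\int_\gamma RF(f)=\int_\gamma RF(\bar f)$, and since $\gamma$ was arbitrary, Remark~\ref{remark:HWB-formula-for-rel-flux-of-curve} yields $RF(f)=RF(\bar f)$. In this scheme the Stokes bookkeeping and the face vanishing are routine; the delicate step that deserves the most attention is the construction of the lift $F$ with the prescribed boundary values.
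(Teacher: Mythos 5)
Your proposal is correct and follows essentially the same route as the paper: reduce to integrals over relative curves via Lemma~\ref{lemma:HWB-formula-for-rel-flux-of-curve}, build a lift $F$ of the homotopy with the prescribed edge values by composing with a smooth family in $\Diff(L)$ (the paper's Claim~\ref{claim:HWB-rel-flux-claim1}), and kill four faces using the Lagrangian conditions on $\zcal_0$, $\zcal_1$, $\Lambda_i$, $\Lambda_j$. The only cosmetic difference is that you apply Stokes once on the $3$-cube while the paper differentiates $\int_{[0,1]^2} l_u^*\omega$ in $u$ and applies Stokes on the square --- the same computation, and indeed the cube picture is exactly the heuristic the paper draws in its Figure~\ref{fig:HWB-rel-flux-homotopy-independent}.
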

\begin{proof}
    Consider a smooth, end-point fixing homotopy $\hcal:[0,1]^2 \rightarrow \lcal(X,L; \Lambda_1,\dots, \Lambda_d)$ such that
    \begin{equation}
        \label{eq:HWB-rel-flux-homotopy}
        \begin{aligned}
        \hcal(0,t) = \zcal_t , \qquad \qquad \quad &\hcal(1,t)  = \bar{\zcal}_t, \\
        \quad \hcal(u,0) = \zcal_0 = \bar{\zcal}_0, \qquad \&\quad &\hcal(u,1) = \zcal_1 = \bar{\zcal}_1, \qquad \forall\ t,u\in [0,1].
    \end{aligned}
    \end{equation}
    Let $F:[0,1]^2\times L \rightarrow X$, $F_{u,t}:= F(u,t,\cdot))$ be a smooth family of free Lagrangian immersions lifting $\hcal$ (Definition \ref{def:smoothpathoflagrangians}). Note that $F(0,t)$ and $F(1,t)$ are respectively liftings of the smooth paths $\zcal_t$ and $\bar{\zcal}_t$ by definition, but they might not coincide with the given paths $f_t$ and $\bar{f}_t$. However by by Claim~\ref{claim:HWB-rel-flux-claim1} and the assumption $f_0 = \bar{f_0}$ the lifting $F$ can be chosen to satisfy
    \begin{equation}
        \label{eq:HWB-rel-flux-homotopy-lift-properties}
        F_{u,0}  = f_0 \quad\forall\ u\in [0,1], \quad \&\quad  F_{0,t} = f_t,\ F_{1,t} = \bar{f}_t \quad  \forall\ t\in [0,1].
    \end{equation}
    Finally since  $\hcal(u,1) = \zcal_1 = \bar{\zcal}_1$ for all $u\in [0,1]$, the Lagrangian immersions $F_{u,1}$ all represent the same immersed Lagrangian with boundary $\zcal_1$. So by Definition~\ref{def:immersionsandimmersedforms}, there exists a smooth path $\psi_u \in \Diff(L)$ such that
    \begin{equation}
        \label{eq:HWB-rel-flux-homotopy-lift-properties-2}
        F_{u,1}  = f_1 \circ \psi_u \qquad \forall\ u\in [0,1].
    \end{equation}
    \begin{claim}
    \label{claim:HWB-rel-flux-claim1}
        Let $F:[0,1]^2 \times L \rightarrow X$ denote a lifting of a smooth map $\hcal$ satisfying \eqref{eq:HWB-rel-flux-homotopy}. The lifting $F$ can be chosen to ensure that 
        \begin{equation*}
        F_{u,0}  = f_0 \quad\forall\ u\in [0,1], \quad \&\quad  F_{0,t} = f_t,\ F_{1,t} = \bar{f}_t \quad  \forall\ t\in [0,1].
    \end{equation*}
    \end{claim}
    \begin{proof}
        Since $F_{u,0}$ is a Lagrangian immersion representing the same immersed Lagrangian $\hcal(u,0) = \zcal_0 = \bar{\zcal}_0$, there exists a smooth path $\eta_u \in \Diff(L)$ (Definition \ref{def:immersionsandimmersedforms}) such that
        \[ F_{u,0} = f_0 \circ \eta_u \qquad \forall\ u \in[0,1]. \]
        Replace $F_{u,t}$ by $F_{u,t} \circ \eta_u^{-1}$ to assume without loss of generality that
        \[F_{u,0} = f_0 \forall\ u\in [0,1]. \]
        Since $F_{0,t} $ and $f_t$ represent the same immersed Lagrangian, there exists a smooth path $\psi_t \in \Diff(L)$ such that
        \[F_{0,t} = f_t \circ \psi_t \qquad \forall\ t\in [0,1].\]
        Since $F_{0,0} = f_0$ and because the immersions are free, $\psi_0 = \id: L \rightarrow L$ (Definition~\ref{def:immersionsandimmersedforms}). Similarly since $F_{1,t} $ and $\bar{f}_t$ represent the same immersed Lagrangian, there exists a smooth path $\bar{\psi}_t \in \Diff(L)$ such that \[F_{1,t} = \bar{f}_t \circ \bar{\psi}_t \qquad \forall\ t\in [0,1].\]
        Since $F_{1,0} = f_0 = \bar{f}_0$, conclude that $\bar{\psi}_0 = \id$. Consider the modified lifting
        \[ \widetilde{F}_{u,t} := F_{u,t} \circ \psi_{(1-u)t}^{-1} \circ \bar{\psi}_{ut}^{-1}. \]
        Since $\psi_0 = \bar{\psi}_0 = \id$, it follows that $\widetilde{F}_{u,0}  = f_0$ for all $u\in [0,1]$, and that $\widetilde{F}_{0,t} = f_t $, $\widetilde{F}_{1,t} = \bar{f}_t$ for all $t\in [0,1]$.
    \end{proof}

    Fix a $C^1$ curve $\gamma:[0,1]\rightarrow L$ such that $\gamma(0), \gamma(1) \in \partial L$. Since $C_1,\dots, C_d$ are the components of $\partial L$, assume WLOG that $\gamma(0)\in C_i$ and $\gamma(1) \in C_j$ for some $1\le i,j\le d$. Consider
    \begin{equation}
        \label{eq:HWB-rel-flux-lu-def}
        l_{u}:[0,1]^2 \rightarrow X, \qquad l_u(t,s) := F(u,t,\gamma(s)) \qquad \forall\ u\in [0,1],
    \end{equation}
    where $F:[0,1]^2 \times L \rightarrow X$ is a lifting of $\hcal$ \eqref{eq:HWB-rel-flux-homotopy} satisfying \eqref{eq:HWB-rel-flux-homotopy-lift-properties} and \eqref{eq:HWB-rel-flux-homotopy-lift-properties-2}. By Lemma~\ref{lemma:HWB-formula-for-rel-flux-of-curve} and \eqref{eq:HWB-rel-flux-homotopy-lift-properties},
    \begin{equation}
        \label{eq:DWB-rel-flux-main-objective-pair}
        \int_{\gamma} RF(f) = \int_{[0,1]^2} l_0^*\omega \qquad \& \qquad \int_{\gamma} 
 RF(\bar{f}) = \int_{[0,1]^2} l_1^*\omega.
    \end{equation}
    By Remark \ref{remark:HWB-formula-for-rel-flux-of-curve}, if the two integrals of \eqref{eq:DWB-rel-flux-main-objective-pair} are equal for any choice of $\gamma$, then $RF(f) = RF(\bar{f})$. A heuristic for equality of the integrals is provided in Figure~\ref{fig:HWB-rel-flux-homotopy-independent}.

    \begin{figure}[t]
        \centering
        \includegraphics[width=0.8\linewidth]{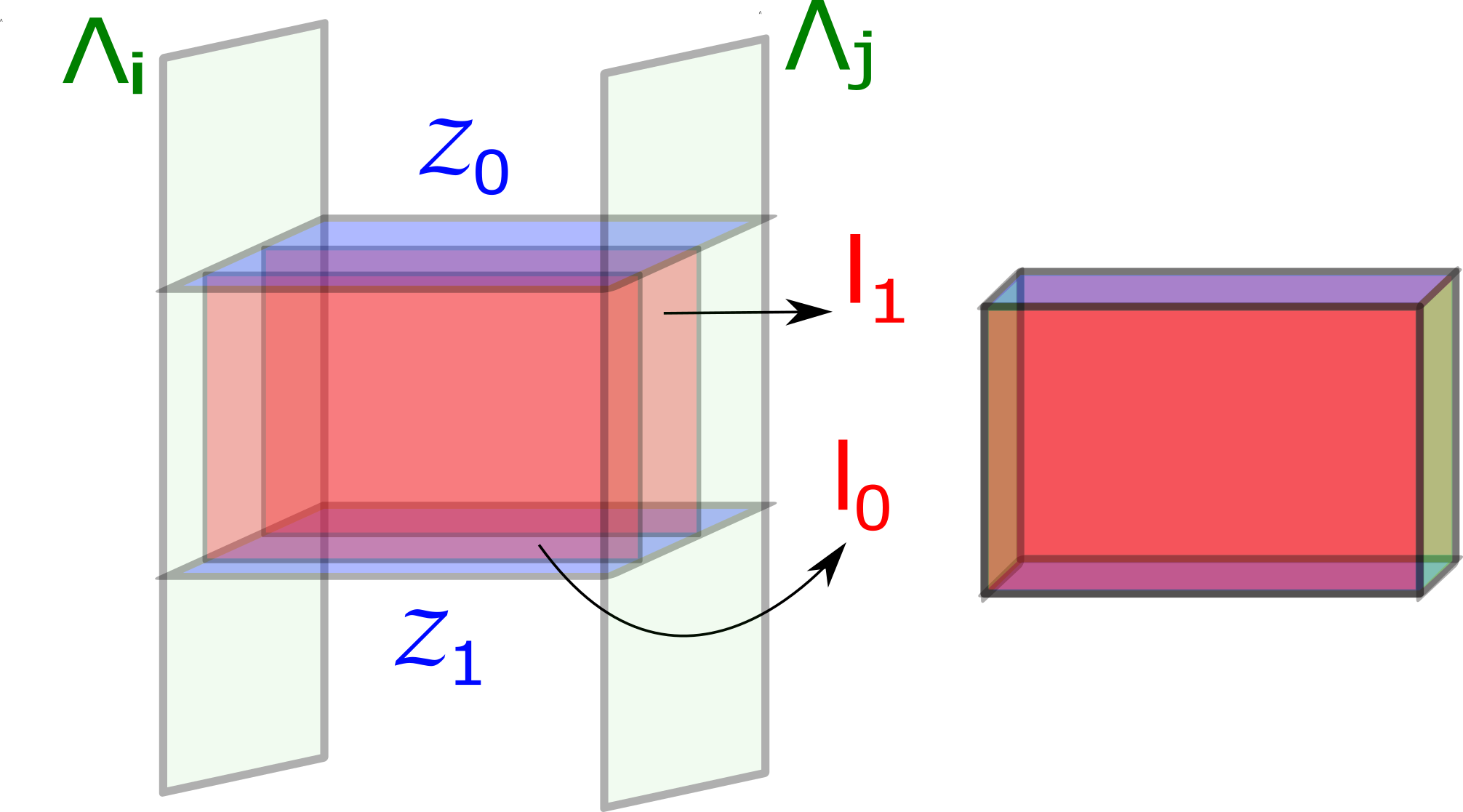}
        \caption{\textbf{Left:} The boundary Lagrangians $\Lambda_i,\Lambda_j$ (vertical planes), the immersed Lagrangian end points $\zcal_0, \zcal_1$ (horizontal planes), and the images of the squares $l_0,l_1$ depicted as rectangles parallel to the page. \textbf{Right:} The integral of $\omega$ on the surface of the box formed by $\Lambda_0, \Lambda_1, \zcal_0, \zcal_1, l_0([0,1]^2)$ and $l_1([0,1]^2)$ is zero since $\omega$ is closed. The top, bottom, left and right side of the cube are Lagrangian, and so the integral of $\omega$ over them vanishes. Thus the integral of $\omega$ over the front and back sides has opposite sign when oriented with the outward normal.}
        \label{fig:HWB-rel-flux-homotopy-independent}
    \end{figure}

    \begin{claim}
    \label{claim:HWB-rel-flux-well-defined-main-claim}
    The real valued function
        \[u\mapsto \int_{[0,1]^2} l_u^* \omega\]
        is independent of $u\in[0,1]$.
    \end{claim}
    \begin{proof}
        Differentiating in $u$ and applying Cartan's formula for the Lie derivative,
        \[ \begin{aligned}
            \frac{d}{du} \int_{[0,1]^2} l_u^* \omega =&\ \int_{[0,1]^2} l_u^* \left( di_{\frac{dl_u}{du}} \omega + i_{\frac{dl_u}{du}}d \omega \right) \\
            =&\ \int_{[0,1]^2} l_u^* \left( di_{\frac{dl_u}{du}} \omega  \right) \\
            =&\ \int_{\partial ([0,1]^2)} l_u^* \left(i_{\frac{dl_u}{du}} \omega  \right),
        \end{aligned} \]
        where the second line follows since $\omega$ is closed and the third line from an application of Stokes' theorem. Note that \[\partial [0,1]^2 = \{0\}\times [0,1] \cup \{1\}\times [0,1] \cup [0,1]\times\{0\} \cup [0,1]\times\{1\}. \]
        So conclude that
        \[\begin{aligned}
            \frac{d}{du} \int_{[0,1]^2} l_u^* \omega =&\ \int_{\partial ([0,1]^2)} l_u^* \left(i_{\frac{dl_u}{du}} \omega  \right) \\
            =&\ \int_{\{0\}\times [0,1]} l_u^* \left(i_{\frac{dl_u}{du}} \omega  \right)  + \int_{ \{1\}\times [0,1] } l_u^* \left(i_{\frac{dl_u}{du}} \omega  \right)\\
            &\ + \int_{[0,1]\times\{0\}} l_u^* \left(i_{\frac{dl_u}{du}} \omega  \right) + \int_{[0,1]\times\{1\}} l_u^* \left(i_{\frac{dl_u}{du}} \omega  \right)
        \end{aligned}\]
        \begin{equation}
            \label{eq:HWB-rel-flux-claim-final-conclusion-1}
            \begin{aligned}
                \implies \quad \frac{d}{du} \int_{[0,1]^2} l_u^* \omega =&\ \int_0^1 \omega \left( \frac{dl_u}{du}(0,s), \frac{\partial l_u}{\partial s}(0,s) \right) ds\\
                &\ +  \int_0^1  \omega \left( \frac{dl_u}{du}(1,s), \frac{\partial l_u}{\partial s}(1,s) \right) ds \\
            &\ +  \int_0^1  \omega \left( \frac{dl_u}{du}(t,0), \frac{\partial l_u}{\partial t}(t,0) \right) dt \\
            &\ +  \int_0^1  \omega \left( \frac{dl_u}{du}(t,1), \frac{\partial l_u}{\partial t}(t,1) \right) dt.
            \end{aligned}
        \end{equation}
        Each of the integrands in the four terms on the RHS of \eqref{eq:HWB-rel-flux-claim-final-conclusion-1} vanishes:
        \begin{itemize}
            \item[(i) \& (ii):] By \eqref{eq:HWB-rel-flux-homotopy-lift-properties}, \eqref{eq:HWB-rel-flux-homotopy-lift-properties-2} and \eqref{eq:HWB-rel-flux-lu-def},
        \[l_u(0,s) = F(u,0,\gamma(s)) = f_0 (\gamma(s) ) \quad \& \quad l_u(1,s) = F(u,1,\gamma(s)) = f_1\circ \psi_u (\gamma(s)) \]
        \[\implies \quad \frac{dl_u}{du}(0,s) = 0 \quad \& \quad \frac{dl_u}{du}(1,s) = df_1\left( \frac{d\psi_u}{du}(\gamma(s))\right)\]
        \begin{equation}
            \label{eq:HWB-rel-flux-well-def-main-eq-1-1}
            \implies \quad \frac{dl_u}{du}(0,s)\in df_0(T^*_{\gamma(s)}L) \quad \&\quad  \frac{dl_u}{du}(1,s) \in df_1(T^*_{\psi_u\circ \gamma(s)}L).
        \end{equation}
        Similarly
        \[\frac{\partial l_u}{\partial s}(0,s) = df_0(\gamma'(s)) \quad \& \quad \frac{\partial l_u}{\partial s}(1,s) = df_1 \circ d\psi_u(\gamma'(s)) \]
        \begin{equation}
            \label{eq:HWB-rel-flux-well-def-main-eq-1-2}
            \implies \quad \frac{\partial l_u}{\partial s} (0,s)\in df_0(T^*_{\gamma(s)}L) \quad \&\quad \frac{\partial l_u}{\partial s} (1,s) \in df_1(T^*_{\psi_u\circ \gamma(s)}L).
        \end{equation}
        But since $f_0$ and $f_1$ are Lagrangian immersions, $df_0(T^*_{\gamma(s)}L)$ and $ df_1(T^*_{\psi_u\circ \gamma(s)}L)$ are Lagrangian planes. Combining this with \eqref{eq:HWB-rel-flux-well-def-main-eq-1-1} and \eqref{eq:HWB-rel-flux-well-def-main-eq-1-2},
        \begin{equation}
            \label{eq:HWB-rel-flux-claim-final-conclusion-2}
            \omega \left( \frac{dl_u}{du}(0,s), \frac{\partial l_u}{\partial s}(0,s) \right) = 0 \quad \&\quad  \omega \left( \frac{dl_u}{du}(1,s), \frac{\partial l_u}{\partial s}(1,s) \right) = 0 \qquad \forall\ s\in [0,1].
        \end{equation}

        \item[(iii) \& (iv):] By \eqref{eq:HWB-rel-flux-homotopy-lift-properties}, \eqref{eq:HWB-rel-flux-homotopy-lift-properties-2} and \eqref{eq:HWB-rel-flux-lu-def},
        \[l_u(t,0) = F_{u,t}(\gamma(0))  \quad \& \quad l_u(1,s) = F_{u,t}(\gamma(1)). \]
        
        Since $F_{u,t}$ is an element of $\lcal(X,L;\Lambda_1,\dots, \Lambda_d)$, by Definition \ref{def:HWB-boundary-conditions-on-immersions-and-lagrangians} and the assumption $\gamma(0)\in C_i$ and $\gamma(1) \in C_j$, $l_u(t,0) = F_{u,t}(\gamma(0)) \in \Lambda_i$ and $l_u(t,1) = F_{u,t}(\gamma(1)) \in \Lambda_j$ for all $t\in [0,1]$,
        \begin{equation}
            \label{eq:HWB-rel-flux-well-def-main-eq-2-1}
            \implies \quad \frac{d l_u}{du} (t,0),\  \frac{\partial l_u}{\partial t} (t,0)\ \in\  T\Lambda_i \quad \&\quad \frac{d l_u}{du} (t,1),\  \frac{\partial l_u}{\partial t} (t,1)\ \in\  T\Lambda_j.
        \end{equation}
        But $\Lambda_i, \Lambda_j$ are Lagrangian submanifolds. Conclude by \eqref{eq:HWB-rel-flux-well-def-main-eq-2-1} that
        \begin{equation}
            \label{eq:HWB-rel-flux-claim-final-conclusion-3}
            \omega \left( \frac{dl_u}{du}(t,0), \frac{\partial l_u}{\partial t}(t,0) \right) =0 \quad \& \quad   \omega \left( \frac{dl_u}{du}(t,1), \frac{\partial l_u}{\partial t}(t,1) \right) =0 \qquad \forall\ t\in [0,1].
        \end{equation}
        \end{itemize}
        
        Claim~\ref{claim:HWB-rel-flux-well-defined-main-claim} follows from \eqref{eq:HWB-rel-flux-claim-final-conclusion-1}, \eqref{eq:HWB-rel-flux-claim-final-conclusion-2} and \eqref{eq:HWB-rel-flux-claim-final-conclusion-3}.
    \end{proof}

    Lemma~\ref{lemma:HWB-rel-flux-path-homotopy-lemma} follows from Remark \ref{remark:HWB-formula-for-rel-flux-of-curve}, \eqref{eq:DWB-rel-flux-main-objective-pair} and Claim~\ref{claim:HWB-rel-flux-well-defined-main-claim}.
\end{proof}

\subsection{Proof of Theorem~\ref{thm:HWB-rel-flux-well-defined}}
\label{subsec:HWB-proof-of-thm-1-2}

\begin{proof}[Proof of Theorem~\ref{thm:HWB-rel-flux-well-defined}]
    By Lemma~\ref{lemma:HWB-rel-flux-well-defined-lemma-1} $RF(f)$ is well defined, and by Lemma~\ref{lemma:HWB-rel-flux-path-homotopy-lemma} it only depends on the chosen lifting $f_0:L \rightarrow X$ of the end point $\zcal_0$, and the end point preserving homotopy class of $\zcal$ in $\lcal(X,L; \Lambda_1,\dots , \Lambda_d)$.
\end{proof}

\subsection{Special Lagrangian flux}
\label{subsection:duallagrangianflux}

This section deals with the special Lagrangian flux functional \eqref{eq:HWB-intro-rel-flux} and a proof of Theorem~\ref{thm:HWB-rel-flux-well-defined}. The proof of Theorem~\ref{thm:HWB-dual-flux-well-defined} is nearly identical to that of Theorem~\ref{thm:HWB-rel-flux-well-defined}. The various claims of Theorem~\ref{thm:HWB-rel-flux-well-defined} are proved as Lemmas in this section, analogous to the results in Section~\ref{subsection:relativelagrangianflux}. The Lemmas are combined to prove Theorem~\ref{thm:HWB-dual-flux-well-defined} in Section~\ref{subsec:HWB-proof-of-thm-2-2}.

\begin{remark}
    The relative Lagrangian flux is a functional on paths of Lagrangians, i.e., paths of $n$-dim submanifolds on which the symplectic form $\omega$ vanishes. Similarly the special Lagrangian flux is naturally a functional on paths of $n$-dim submanifolds on which $\Im(\Omega)$ vanishes. For simplicity the discussion is restricted to paths of special Lagrangian submanifolds on which both $\omega$ and $\Im(\Omega)$ vanish, but the same proof carries over more generally when only $\Im(\Omega)$ vanishes.
\end{remark}

\begin{lemma}
\label{lemma:HWB-dual-flux-well-defined-lemma-1}
    For any path smooth path $\zcal: [0,1]\rightarrow \scal \lcal(X,L;\Lambda_1,\dots, \Lambda_d)$ and any smooth lifting $f:[0,1] \times L \rightarrow X$ of the path (Definition \ref{def:smoothpathoflagrangians}),
    \[SF(f):= \left[  \int_0^1 f_t^* i_{\frac{df_t}{dt}} \Im(\Omega) dt \right] \]
    is well defined as an element of $H^{n-1}(L;\RR)$.
\end{lemma}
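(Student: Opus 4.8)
The plan is to mirror the one-line argument for the relative flux in Lemma~\ref{lemma:HWB-rel-flux-well-defined-lemma-1}, with Lemma~\ref{lemma:HWB-phit-is-closed} playing the role that Lemma~\ref{lemma:HWB-formula-for-derivative-of-path-of-Lagrangians} played there. The substance of the claim is that the time-integrated $(n-1)$-form $\int_0^1\phi_t\,dt$, with $\phi_t = f_t^*\big(i_{\frac{df_t}{dt}}\Im(\Omega)\big)$, is a closed form, so that it represents a genuine de Rham cohomology class in $H^{n-1}(L;\RR)$.

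First I would record that for each fixed $t$ the integrand $\phi_t$ is a smooth $(n-1)$-form on $L$ that is closed: this is exactly Lemma~\ref{lemma:HWB-phit-is-closed}, whose hypotheses ($f_t\in C^\infty(L,X;\Lambda_1,\dots,\Lambda_d)$ and $f_t^*\Im(\Omega)\equiv 0$) hold along a path of special Lagrangian immersions. Since $f$ is smooth in $(t,p)$, the family $t\mapsto\phi_t$ is smooth, so $\int_0^1\phi_t\,dt$ is a well-defined smooth $(n-1)$-form on $L$. Closedness then follows by commuting $d$ with integration in the parameter $t$ (differentiation under the integral sign is justified by smoothness of the integrand): $d\int_0^1\phi_t\,dt = \int_0^1 d\phi_t\,dt = 0$. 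Hence $\int_0^1\phi_t\,dt$ is closed and $SF(f)$ is a well-defined element of $H^{n-1}(L;\RR)$.

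I would also point out the conceptual version, which makes the vanishing transparent: the homotopy formula applied to the smooth map $f:[0,1]\times L\to X$ and the closed form $\Im(\Omega)$ gives $f_1^*\Im(\Omega) - f_0^*\Im(\Omega) = d\int_0^1\phi_t\,dt + \int_0^1 f_t^*\big(i_{\frac{df_t}{dt}}d\,\Im(\Omega)\big)\,dt$; here the last term vanishes because $\Im(\Omega)$ is closed, and the left-hand side vanishes because $f_0^*\Im(\Omega)=f_1^*\Im(\Omega)=0$ by the special Lagrangian condition, again yielding $d\int_0^1\phi_t\,dt=0$.

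Finally I would flag the one structural difference from the relative flux that explains the target group: unlike $\theta_t$, the form $\phi_t$ carries no boundary-vanishing condition (there is no analog of the Lagrangian boundary constraint forcing $\phi_t|_{\partial L}\equiv 0$, in contrast to Lemma~\ref{lemma:HWB-formula-for-derivative-of-path-of-Lagrangians}), so the class naturally lives in the absolute cohomology $H^{n-1}(L;\RR)$ rather than the relative group $H^1(L,\partial L;\RR)$ in which $RF(f)$ sits. There is no serious obstacle in this lemma: the only real content is Lemma~\ref{lemma:HWB-phit-is-closed} together with the elementary interchange of $d$ and the $t$-integral; the genuinely substantive statements (independence of the homotopy class and of the lifting) are deferred to the analogs of Lemmas~\ref{lemma:HWB-formula-for-rel-flux-of-curve} and \ref{lemma:HWB-rel-flux-path-homotopy-lemma}.
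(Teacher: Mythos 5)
Your proof is correct and follows the paper's own argument: the paper's proof of this lemma is simply ``Follows by Lemma~\ref{lemma:HWB-phit-is-closed},'' and your write-up spells out exactly that reasoning (closedness of each $\phi_t$ plus interchanging $d$ with the $t$-integral), with the homotopy-formula remark and the comparison to the relative flux as accurate supplementary commentary.
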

\begin{proof}
    Follows by Lemma~\ref{lemma:HWB-phit-is-closed}.
\end{proof}

Lemma~\ref{lemma:HWB-formula-for-dual-flux-of-curve} is analogous to Lemma~\ref{lemma:HWB-formula-for-rel-flux-of-curve}, and is an alternate characterization of $SF(f)$.

\begin{lemma}
    \label{lemma:HWB-formula-for-dual-flux-of-curve}
    Let $\zcal:[0,1]\rightarrow \scal \lcal(X, L; \Lambda_1,\dots, \Lambda_d)$ be a smooth path and $f : [0,1]\times L \rightarrow X$ be a smooth lifting (Definition \ref{def:smoothpathoflagrangians}). Then for any closed $(n-1)$-manifold $B$ and any $C^2$ map $\sigma:B \rightarrow L$,
    \[ SF(f) \cdot \sigma^*[B] = \int_{B} \sigma^* SF(f) = \int_{[0,1]\times B} \beta^* \Im(\Omega), \]
    where $\beta: [0,1]\times B \rightarrow X$ depends on $f_t$ and $\sigma$ and is defined by
    \[ \beta_{f, \sigma} (t,q) := f(t,\sigma (q)). \]
\end{lemma}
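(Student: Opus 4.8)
The plan is to follow the strategy of Lemma~\ref{lemma:HWB-formula-for-rel-flux-of-curve} while carefully tracking the higher form-degrees. First I would record that the pairing $SF(f)\cdot\sigma_*[B] = \int_B \sigma^* SF(f)$ is just the definition of evaluating a class in $H^{n-1}(L;\RR)$ on the cycle $\sigma_*[B]$, and that it is independent of the chosen closed representative of $SF(f)$ precisely because $B$ is a closed manifold: if $\alpha$ and $\alpha + d\eta$ both represent $SF(f)$, then $\int_B\sigma^*(d\eta) = \int_B d(\sigma^*\eta) = \int_{\partial B}\sigma^*\eta = 0$ by Stokes. This reduces the lemma to proving the identity $\int_B\sigma^* SF(f) = \int_{[0,1]\times B}\beta^*\Im(\Omega)$ for the explicit closed representative of $SF(f)$ furnished by \eqref{eq:HWB-intro-dual-flux} and Lemma~\ref{lemma:HWB-phit-is-closed}.

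The technical heart is a pointwise identity relating the integrand of $SF$ to the contraction of $\beta^*\Im(\Omega)$ with $\partial_t$. At a point $(t,q)\in[0,1]\times B$ the differential of $\beta_{f,\sigma}$ sends $\partial_t\mapsto \frac{df_t}{dt}(\sigma(q))$ and $v\mapsto df_t(d\sigma(v))$ for $v\in T_qB$. By naturality of pullback and interior product I would verify, evaluating on an arbitrary frame $v_1,\dots,v_{n-1}\in T_qB$, that
\[
\left(i_{\partial_t}\,\beta^*\Im(\Omega)\right)(v_1,\dots,v_{n-1})
= \Im(\Omega)\!\left(\tfrac{df_t}{dt},\,df_t(d\sigma(v_1)),\dots,df_t(d\sigma(v_{n-1}))\right)
= \left(\sigma^* f_t^* i_{\frac{df_t}{dt}}\Im(\Omega)\right)(v_1,\dots,v_{n-1}),
\]
so that $i_{\partial_t}\beta^*\Im(\Omega) = \sigma^*\big(f_t^* i_{\frac{df_t}{dt}}\Im(\Omega)\big)$ as a $t$-family of $(n-1)$-forms on $B$.

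With this identity in hand, the last step is integration along the $[0,1]$-fiber. Since $B$ has dimension $n-1$, any $n$-form $\eta$ on $[0,1]\times B$ is of the form $dt\wedge\alpha_t$, whence $i_{\partial_t}\eta = \alpha_t$ and Fubini gives $\int_{[0,1]\times B}\eta = \int_B\int_0^1 i_{\partial_t}\eta\,dt$, once $[0,1]\times B$ is oriented by $dt$ followed by the orientation of $B$. Applying this to $\eta = \beta^*\Im(\Omega)$ and inserting the pointwise identity yields
\[
\int_{[0,1]\times B}\beta^*\Im(\Omega)
= \int_B\int_0^1 \sigma^*\!\big(f_t^* i_{\frac{df_t}{dt}}\Im(\Omega)\big)\,dt
= \int_B \sigma^*\!\left(\int_0^1 f_t^* i_{\frac{df_t}{dt}}\Im(\Omega)\,dt\right)
= \int_B \sigma^* SF(f),
\]
where moving $\int_0^1 dt$ past $\sigma^*$ and $\int_B$ is justified by linearity and the smoothness of $f$ and $\sigma$. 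The main obstacle I anticipate is purely bookkeeping: fixing the orientation convention in the fiber-integration formula and confirming the interior-product identity with the correct sign, since here one contracts an $n$-form down to an $(n-1)$-form rather than handling the scalar double integral over $[0,1]^2$ that appeared in Lemma~\ref{lemma:HWB-formula-for-rel-flux-of-curve}. Notably, no homotopy or Lagrangian-vanishing argument is needed for this lemma; those enter only in the subsequent homotopy-invariance statement.
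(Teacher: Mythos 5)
Your proposal is correct and follows essentially the same route as the paper: the key step in both is the decomposition $\beta^*\Im(\Omega) = dt\wedge i_{\partial_t}\beta^*\Im(\Omega)$ together with the pointwise identification $i_{\partial_t}\beta^*\Im(\Omega) = \sigma^* f_t^* i_{\frac{df_t}{dt}}\Im(\Omega)$, followed by Fubini. Your preliminary remark on independence of the closed representative is a harmless addition that the paper delegates to Lemma~\ref{lemma:HWB-dual-flux-well-defined-lemma-1}.
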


\begin{remark}
    \label{remark:HWB-formula-for-dual-flux-of-curve}
    Since $L$ is compact, by Thom's solution to the Steenrod problem, $H_{n-1}(L; \RR)$ is generated by the images of fundamental classes of compact manifolds, i.e., by elements of the form $\sigma_*([B])$ where $\sigma:B \rightarrow L$ a smooth map into $L$ from a closed oriented $(n-1)$-dimensional manifold $B$ \cite[Theorem III.4]{Thom1954}. The reader is referred to a survey by Sullivan \cite{Sullivan2004} additional details. In particular, Lemma~\ref{lemma:HWB-formula-for-dual-flux-of-curve} is an alternate characterization of the class $SF(f)$.
\end{remark}

\begin{proof}[Proof of Lemma~\ref{lemma:HWB-formula-for-dual-flux-of-curve}]
    Using the definition from \eqref{eq:HWB-intro-dual-flux},
    \begin{equation}
    \label{eq:HWB-formula-for-dual-flux-of-curve-eq1}
        \begin{aligned}
        \int_{B} \sigma^* SF(f) =&\ \int_{B} \sigma^* \int_0^1 f_t^* i_{\frac{df_t}{dt} \circ \sigma} \Im(\Omega) dt \\
        =&\ \int_0^1 \left( \int_B  \sigma^* f_t^* i_{\frac{df_t}{dt} \circ \sigma} \Im(\Omega)  \right)dt.
    \end{aligned}
    \end{equation}
    Define $\beta_t := \beta(t,\cdot) =  f_t \circ \sigma:B \rightarrow X$. Since $\beta^* \Im(\Omega)$ is a top form on the product space $[0,1]\times B$, it is decomposable and
    \[ \begin{aligned}
        \beta^* \Im(\Omega) =&\ dt \wedge \left( i_{\frac{\partial}{\partial t}} \beta_t^* \Im(\Omega) \right) \\
        =&\ dt \wedge \ \sigma^* f_t^* i_{\frac{df_t}{dt} \circ \sigma} \Im(\Omega) 
        \end{aligned} \]
      Thus by \eqref{eq:HWB-formula-for-dual-flux-of-curve-eq1},  
    \begin{align*}
        \implies \quad  \int_{[0,1]\times B} \beta^* \Im(\Omega) =&\  \int_{[0,1]\times B} dt \wedge \ \sigma^* f_t^* i_{\frac{df_t}{dt} \circ \sigma} \Im(\Omega) \\
        =&\  \int_0^1 \left( \int_B  \sigma^* f_t^* i_{\frac{df_t}{dt} \circ \sigma} \Im(\Omega)  \right)dt\\
        =&\ \int_B \sigma^* SF(f).\tag*{\qedhere}
    \end{align*} 
\end{proof}

Lemma~\ref{lemma:HWB-dual-flux-path-homotopy-lemma} is analogous to Lemma~\ref{lemma:HWB-rel-flux-path-homotopy-lemma}, and is the main step in the proof of Theorem~\ref{thm:HWB-dual-flux-well-defined}. It asserts that the special Lagrangian flux of a path of immersed SLb depends only on the end point preserving homotopy class of the path.

\begin{lemma}
\label{lemma:HWB-dual-flux-path-homotopy-lemma}
    Consider two smooth paths $\zcal, \bar{\zcal}:[0,1] \rightarrow \scal\lcal(X, L; \Lambda_1,\dots, \Lambda_d)$ with equal end points, $\zcal_0 = \bar{\zcal}_0$ and $\zcal_1 = \bar{\zcal}_1$, that are smoothly homotopic with fixed end-points. Let $f,\bar{f}: [0,1]\times L \rightarrow X$ respectively be smooth liftings of $\zcal$ and $\bar{\zcal}$ (Definition \ref{def:smoothpathoflagrangians}), and assume that $f_0 = \bar{f}_0$. Then
    \[ SF(\bar{f}) = \left[ \int_0^1 \bar{f}_t^* i_{\frac{d\bar{f}_t}{dt}} \Im(\Omega) dt \right] =  \left[ \int_0^1 f_t^* i_{\frac{df_t}{dt}} \Im(\Omega) dt \right] = SF(f).\]
\end{lemma}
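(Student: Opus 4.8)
The plan is to transcribe the proof of Lemma~\ref{lemma:HWB-rel-flux-path-homotopy-lemma} almost verbatim, making three substitutions: replace the symplectic form $\omega$ by $\Im(\Omega)$, replace the $C^1$ curves $\gamma$ with endpoints on $\partial L$ by smooth maps $\sigma:B\to L$ out of closed oriented $(n-1)$-manifolds $B$ (the generators of $H_{n-1}(L;\RR)$ furnished by Remark~\ref{remark:HWB-formula-for-dual-flux-of-curve}), and replace the curve formula of Lemma~\ref{lemma:HWB-formula-for-rel-flux-of-curve} by its dual Lemma~\ref{lemma:HWB-formula-for-dual-flux-of-curve}. First I would fix a smooth, end-point fixing homotopy $\hcal:[0,1]^2\to\scal\lcal(X,L;\Lambda_1,\dots,\Lambda_d)$ between $\zcal$ and $\bar\zcal$, lift it to a smooth family of special Lagrangian immersions $F:[0,1]^2\times L\to X$, and run the exact analog of Claim~\ref{claim:HWB-rel-flux-claim1} (whose proof uses only the reparametrization freedom and is insensitive to which ambient form is tracked) to arrange
\[F_{u,0}=f_0\ \forall u,\qquad F_{0,t}=f_t,\quad F_{1,t}=\bar f_t\ \forall t,\qquad F_{u,1}=f_1\circ\psi_u\]
for a smooth path $\psi_u\in\Diff(L)$ with $\psi_0=\id$.

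Next, by Remark~\ref{remark:HWB-formula-for-dual-flux-of-curve} it suffices to prove $\int_B\sigma^*SF(f)=\int_B\sigma^*SF(\bar f)$ for every such $\sigma:B\to L$. Setting $\beta_u(t,q):=F(u,t,\sigma(q))$, Lemma~\ref{lemma:HWB-formula-for-dual-flux-of-curve} identifies these integrals with $\int_{[0,1]\times B}\beta_0^*\Im(\Omega)$ and $\int_{[0,1]\times B}\beta_1^*\Im(\Omega)$, so the goal reduces to showing that $u\mapsto\int_{[0,1]\times B}\beta_u^*\Im(\Omega)$ is constant. Differentiating in $u$, applying Cartan's formula together with $d\,\Im(\Omega)=0$, and invoking Stokes' theorem gives
\[\frac{d}{du}\int_{[0,1]\times B}\beta_u^*\Im(\Omega)=\int_{\partial([0,1]\times B)}\beta_u^*\left(i_{\frac{d\beta_u}{du}}\Im(\Omega)\right).\]
Because $B$ is closed, $\partial([0,1]\times B)=\{1\}\times B-\{0\}\times B$, so unlike the relative case there are no curve-endpoint pieces and the boundary Lagrangians play no role here; this matches the remark preceding the lemma that only the vanishing of $\Im(\Omega)$ is needed.

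It then remains to check that the two boundary terms vanish. On $\{0\}\times B$ we have $\beta_u(0,q)=f_0(\sigma(q))$ independent of $u$, hence $\tfrac{d\beta_u}{du}(0,\cdot)\equiv 0$ and that integrand is zero. The crux, and the step I expect to be the main obstacle, is the term on $\{1\}\times B$: here $\beta_u(1,\cdot)=f_1\circ\psi_u\circ\sigma$, so writing $g_u:=f_1\circ\psi_u$ the integrand at $q\in B$, evaluated on a basis $\xi_1,\dots,\xi_{n-1}$ of $T_qB$, equals $\Im(\Omega)\big(\tfrac{dg_u}{du},\,dg_u(d\sigma\,\xi_1),\dots,dg_u(d\sigma\,\xi_{n-1})\big)$. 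Since $\psi_u$ is a reparametrization, $\tfrac{dg_u}{du}=df_1\big(\tfrac{d\psi_u}{du}\big)\in dg_u(T_{\sigma(q)}L)$, so \emph{all} $n$ arguments of $\Im(\Omega)$ lie in the special Lagrangian tangent $n$-plane $dg_u(T_{\sigma(q)}L)$; as $g_u$ is special Lagrangian, $g_u^*\Im(\Omega)=0$ forces $\Im(\Omega)$ to restrict to zero on this plane, so the integrand vanishes pointwise. This is precisely the analog of the relative-flux step, where both arguments of $\omega$ lay in a Lagrangian plane; the one point requiring care is that the velocity $\tfrac{dg_u}{du}$ is tangent to the special Lagrangian, which uses that the ambiguity between the two liftings over $\zcal_1$ is a pure $\Diff(L)$-reparametrization. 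With both boundary integrals zero the function is constant in $u$, and comparing $u=0$ with $u=1$ yields $SF(f)=SF(\bar f)$.
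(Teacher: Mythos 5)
Your proposal is correct and follows essentially the same route as the paper: the same normalized lifting of the homotopy via Claim~\ref{claim:HWB-rel-flux-claim1}, the same reduction to cycles $\sigma:B\to L$ via Lemma~\ref{lemma:HWB-formula-for-dual-flux-of-curve}, and the same Cartan--Stokes computation showing $u\mapsto\int_{[0,1]\times B}\beta_u^*\Im(\Omega)$ is constant, with the $\{1\}\times B$ term killed because the $u$-velocity is a $\Diff(L)$-reparametrization vector tangent to the special Lagrangian $f_1$, on whose tangent planes $\Im(\Omega)$ vanishes. The paper phrases this last step as $\sigma^*\psi_u^*\bigl(i_{\frac{d\psi_u}{du}}f_1^*\Im(\Omega)\bigr)=0$ rather than pointwise on a basis of $T_qB$, but the content is identical.
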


\begin{proof}
    Consider a smooth end-point fixed homotopy $\hcal:[0,1]^2 \rightarrow \scal \lcal(X,L; \Lambda_1,\dots, \Lambda_d)$ (Definition \ref{def:smoothpathoflagrangians}) such that
    \begin{equation}
        \label{eq:HWB-dual-flux-homotopy}
        \begin{aligned}
        \hcal(0,t) = \zcal_t , \qquad \qquad &\hcal(1,t)  = \bar{\zcal}_t, \\
        \quad \hcal(u,0) = \zcal_0 = \bar{\zcal}_0, \qquad \&\ &\hcal(u,1) = \zcal_1 = \bar{\zcal}_1, \qquad \forall\ t,u\in [0,1].
    \end{aligned}
    \end{equation}
    Let $F:[0,1]^2\times L \rightarrow X$, $F_{u,t}:= F(u,t,\cdot))$ be a smooth family of free special Lagrangian immersions lifting $\hcal$. Note that by the assumption $f_0 = \bar{f_0}$. Furthermore $F(0,t)$ and $F(1,t)$ are respectively liftings of the smooth paths $\zcal_t$ and $\bar{\zcal}_t$ by definition. In general $F(0,t)$ and $F(1,t)$ might not coincide with the given paths $f_t$ and $\bar{f}_t$, but by Claim~\ref{claim:HWB-rel-flux-claim1}, the lifting $F$ can be chosen to satisfy
    \begin{equation}
        \label{eq:HWB-dual-flux-homotopy-lift-properties}
        F_{u,0}  = f_0 \quad\forall\ u\in [0,1], \quad \&\quad  F_{0,t} = f_t,\ F_{1,t} = \bar{f}_t \quad  \forall\ t\in [0,1].
    \end{equation}
    Finally since  $\hcal(u,1) = \zcal_1 = \bar{\zcal}_1$ for all $u\in [0,1]$, the Lagrangian immersions $F_{u,1}$ all represent the same immersed Lagrangian with boundary $\zcal_1$. So by Definition \ref{def:immersionsandimmersedforms}, there exists a smooth path $\psi_u \in \Diff(L)$ such that
    \begin{equation}
        \label{eq:HWB-dual-flux-homotopy-lift-properties-2}
        F_{u,1}  = f_1 \circ \psi_u \qquad \forall\ u\in [0,1].
    \end{equation}

    Fix a closed $(n-1)$-manifold $B$ and a smooth map $\sigma : B \rightarrow L$, and define
    \begin{equation}
        \label{eq:HWB-dual-flux-lu-def}
        \beta_{u}:[0,1]\times B \rightarrow X, \qquad \beta_u(t,q) := F(u,t,\sigma(q)) \qquad \forall\ u\in [0,1].
    \end{equation}
    By Lemma~\ref{lemma:HWB-formula-for-dual-flux-of-curve} and assumption \eqref{eq:HWB-dual-flux-homotopy-lift-properties},
    \begin{equation}
        \label{eq:DWB-dual-flux-main-objective-pair}
        \int_{\gamma} SF(f) = \int_{[0,1]\times B} \beta_0^* \Im(\Omega) \quad \& \quad \int_{\gamma} SF(\bar{f}) = \int_{[0,1]\times B} \beta_1^* \Im(\Omega).
    \end{equation}
    By Remark \ref{remark:HWB-formula-for-dual-flux-of-curve}, if the two integrals of \eqref{eq:DWB-dual-flux-main-objective-pair} are equal for any choice of $B$ and $\sigma:B \rightarrow L$, then $SF(f) = SF(\bar{f})$.

    \begin{figure}[tbh]
        \centering
        \includegraphics[width=0.4\linewidth]{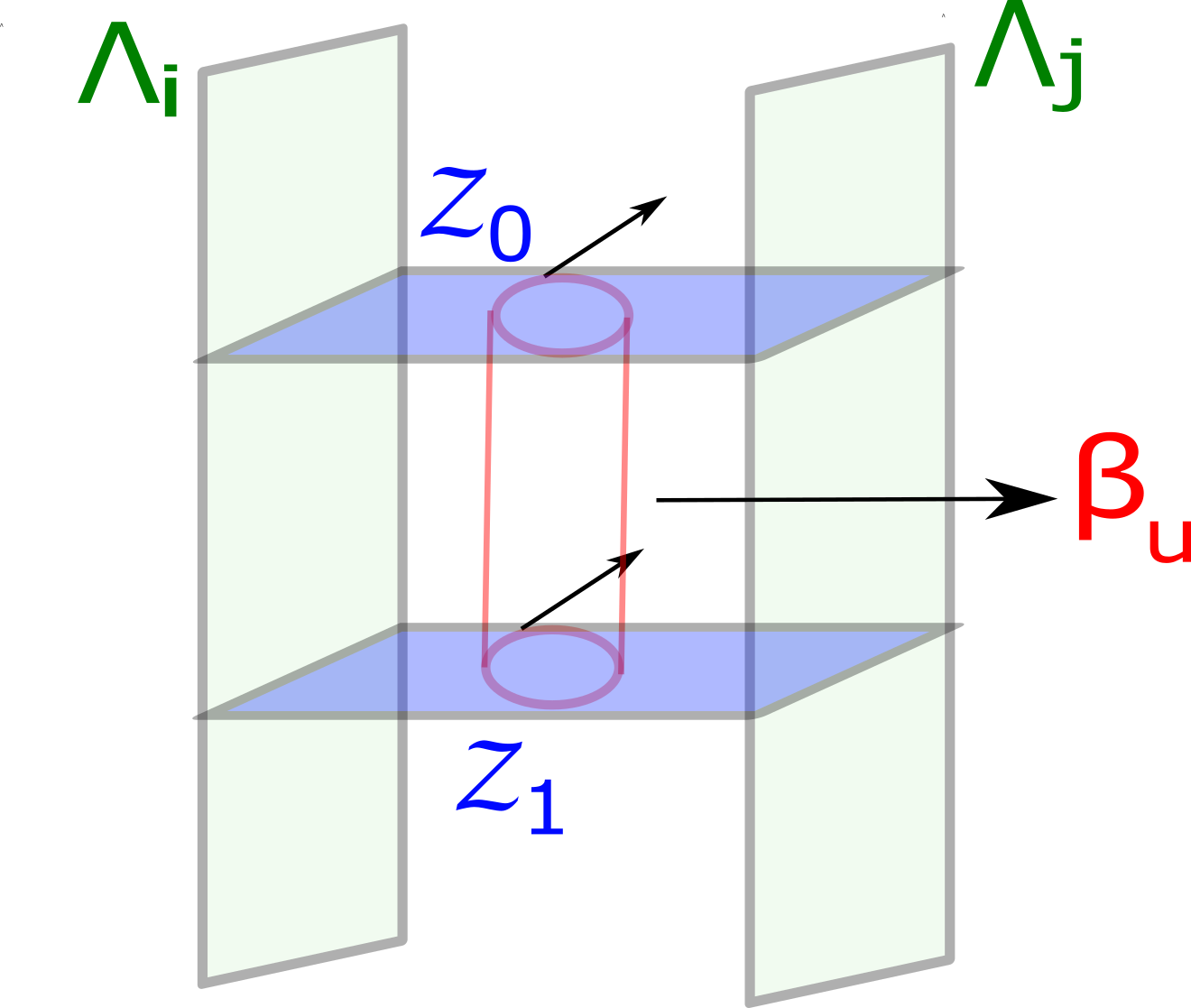}
        \caption{The boundary Lagrangians $\Lambda_i, \Lambda_j$ (vertical planes), the immersed special Lagrangian end points $\zcal_0, \zcal_1$ (horizontal planes), and the image of $[0,1]\times B$ under $\beta_u$ represented as a cylinder with bounadry on $\zcal_0$ and $\zcal_1$. The arrows depict $\frac{d\beta_{u}}{du}$ on $\{0\}\times B$ and $\{1\} \times B$, which are respectively tangent to $\zcal_0$ and $\zcal_1$.}
        \label{fig:HWB-dual-flux-homotopy-independent}
    \end{figure}

    \begin{claim}
    \label{claim:HWB-dual-flux-well-defined-main-claim}
    The real valued function
        \[u\mapsto \int_{[0,1]\times B} \beta_u^* \Im(\Omega)\]
        is independent of $u\in [0,1]$.
    \end{claim}
    \begin{proof}
        Differentiating in $u$ and applying Cartan's formula for the Lie derivative,
        \[ \begin{aligned}
            \frac{d}{du} \int_{[0,1] \times B} \beta_u^* \Im(\Omega) =&\ \int_{[0,1] \times B} \beta_u^* \left( di_{\frac{d\beta_u}{du}} \Im(\Omega) + i_{\frac{d\beta_u}{du}} d \Im(\Omega) \right) \\
            =&\ \int_{[0,1] \times B} \beta_u^* \left( di_{\frac{d\beta_u}{du}} \Im(\Omega)  \right) \\
            =&\ \int_{\partial ([0,1] \times B)} \beta_u^* \left(i_{\frac{d\beta_u}{du}} \Im(\Omega)  \right).
        \end{aligned} \]
        The second line follows since $\Im(\Omega)$, being the imaginary part of a holomorphic $(n,0)$-form, is closed. The third line follows from an application of Stokes' theorem. Since $B$ is a closed manifold,
        \[\partial \left( [0,1]\times B \right) = \{0\}\times B \bigcup \{1\}\times B, \]
        \begin{equation}
            \label{eq:HWB-dual-flux-claim-final-conclusion-1}
            \begin{aligned}
            \implies \quad \frac{d}{du} \int\displaylimits_{[0,1]\times B} \beta_u^* \Im(\Omega) =&\ \int\displaylimits_{\partial ([0,1] \times B)} \beta_u^* \left(i_{\frac{d\beta_u}{du}} \Im(\Omega)  \right) \\
            =&\ \int\displaylimits_{\{0\}\times B} \beta_u^* \left(i_{\frac{d\beta_u}{du}}  \Im(\Omega)  \right)  + \int\displaylimits_{ \{1\}\times B } \beta_u^* \left(i_{\frac{d\beta_u}{du}}  \Im(\Omega)  \right).
        \end{aligned}
        \end{equation}
        The integrands on the RHS of \eqref{eq:HWB-dual-flux-claim-final-conclusion-1} will turn out to be zero for every $u\in[0,1]$, because $\frac{d\beta_u}{du}$ is tangent to the $\zcal_0$ and $\zcal_1$ on the respective boundary components $\{0\}\times B$ and $\{1\}\times B$ by \eqref{eq:HWB-dual-flux-well-def-main-eq-1-1} (Figure~\ref{fig:HWB-dual-flux-homotopy-independent}). Combining \eqref{eq:HWB-dual-flux-homotopy-lift-properties}, \eqref{eq:HWB-dual-flux-homotopy-lift-properties-2} and \eqref{eq:HWB-dual-flux-lu-def},
            \begin{equation}
                \label{eq:HWB-dual-flux-well-def-main-eq-0--1}
                \begin{aligned}
                    \beta_u(0,q) =&\ F(u,0,\sigma(q)) = f_0 \circ \sigma(q) \\
                    \beta_u(1,q) =&\ F(u,1,\sigma(q)) = f_1\circ \psi_u \circ \sigma(q)
                \end{aligned}
                \qquad \forall\ q\in B,\ u\in [0,1],
            \end{equation}
        \begin{equation}
            \label{eq:HWB-dual-flux-well-def-main-eq-1-1}
            \implies \qquad \frac{d\beta_u}{du}(0,q) = 0 \qquad \& \qquad \frac{d\beta_u}{du}(1,q) = df_1\left( \frac{d\psi_u}{du}(\sigma(q))\right).
        \end{equation}
        By \eqref{eq:HWB-dual-flux-claim-final-conclusion-1}, \eqref{eq:HWB-dual-flux-well-def-main-eq-0--1}, and \eqref{eq:HWB-dual-flux-well-def-main-eq-1-1},
        \begin{align*}
            \frac{d}{du} \int_{[0,1]\times B} \beta_u^* \Im(\Omega) =&\ \int_{\{0\}\times B} \beta_u^* \left(i_{\frac{d\beta_u}{du}}  \Im(\Omega)  \right)  + \int_{ \{1\}\times B } \beta_u^* \left(i_{\frac{d\beta_u}{du}}  \Im(\Omega)  \right) \\
            =&\ \int_{ B} \sigma^* f_0^* \left(i_{ 0}  \Im(\Omega)  \right)  + \int_{ B }  \sigma^* \psi_u^* f_1^* \left(i_{ df_1\left( \frac{d\psi_u}{du} \right)}  \Im(\Omega)  \right) \\
            =&\ 0+ \int_{ B }  \sigma^* \psi_u^* \left(i_{\frac{d\psi_u}{du}}  f_1^* \Im(\Omega)  \right)  0,
        \end{align*}
        where the last line follows from the assumption $f_1^*\Im(\Omega) = 0$, the special Lagrangian condition on $f_1$. Claim~\ref{claim:HWB-dual-flux-well-defined-main-claim} follows.
    \end{proof}

    Lemma~\ref{lemma:HWB-dual-flux-path-homotopy-lemma} follows from Remark \ref{remark:HWB-formula-for-dual-flux-of-curve}, \eqref{eq:DWB-dual-flux-main-objective-pair} and Claim~\ref{claim:HWB-dual-flux-well-defined-main-claim}.
\end{proof}

\subsection{Proof of Theorem~\ref{thm:HWB-dual-flux-well-defined}}
\label{subsec:HWB-proof-of-thm-2-2}

\begin{proof}[Proof of Theorem~\ref{thm:HWB-dual-flux-well-defined}]
    By Lemma~\ref{lemma:HWB-dual-flux-well-defined-lemma-1} $SF(f)$ is well defined, and by Lemma~\ref{lemma:HWB-dual-flux-path-homotopy-lemma} it only depends on the chosen lifting $f_0:L \rightarrow X$ of the end point $\zcal_0$, and the end point preserving homotopy class of $\zcal$ in $ \scal \lcal(X,L; \Lambda_1,\dots , \Lambda_d)$.
\end{proof}

\section{Canonical coordinates and affine structures}
\label{sec:HWB-canonicalcoordinatesandaffinestructure}

This section contains a proof of Theorem~\ref{thm:HWB-affine-structure-on-moduli-space}, generalizing Hitchin's canonical affine structures \cite[p.~507-508]{hitchin1997moduli} to the moduli space of SLb. The relative Lagrangian flux \ref{eq:HWB-intro-rel-flux} and special Lagrangian flux \ref{eq:HWB-intro-dual-flux} each induces a family of affine charts on $\scal\lcal(X,L; \Lambda_1,\dots, \Lambda_d)$ taking values in $H^1(L, \partial L; \RR)$ and $H^{n-1}(L;\RR)$ respectively.

The various steps in the construction of affine structures are carried out in \S\ref{subsec:HWB-canonical-coordinates}, and Theorem~\ref{thm:HWB-affine-structure-on-moduli-space} is proved in \S\ref{subsec:HWB-proof-of-affine-structures-thm}. Finally the isometric and Lagrangian embedding of Proposition~\ref{prop:HWB-isometric-embedding-intro} is proved in \S\ref{subsec:HWB-isometric-and-lagrangian-embedding}.

For the rest of the section, assume that $X$ is a Calabi--Yau manifold of complex dimension $n$, $\Lambda_1,\dots, \Lambda_d \subset X$ are embedded Lagrangian subamnifolds, $L$ is an $n$-dim compact manifold with smooth boundary and $d$ boundary components, and $m := \dim H^1(L, \partial L; \RR) = \dim H^{n-1}(L;\RR)$. By Theorem~\ref{thm:HWB-tangentspacetospeciallagrangianswithboundary-2} $\scal\lcal(X,L; \Lambda_1,\dots, \Lambda_d)$ is an $m$-dim manifold.

\subsection{Canonical coordinates}
\label{subsec:HWB-canonical-coordinates}

This section contains a construction of the two families of coordinate charts mentioned in Theorem~\ref{thm:HWB-affine-structure-on-moduli-space}, associated respectively to the relative Lagrangian flux \eqref{eq:HWB-intro-rel-flux}  and special Lagrangian flux \eqref{eq:HWB-intro-dual-flux}. First step is to construct the families of local maps 

\begin{definition}
\label{def:HWB-def-of-coordinate-chart}
    Fix an immersed special Lagrangian $\zcal_0 \in \scal \lcal(X,L; \Lambda_1,\dots , \Lambda_d)$ and an immersion representing it $f_0: L \rightarrow X$. Let $\ucal \subset \scal \lcal$ be a simply-connected neighborhood of $\zcal_0$. Define two functionals
    \[R_{f_0}:\ucal \rightarrow H^1(L, \partial L; \RR) \quad \& \quad S_{f_0}:\ucal \rightarrow H^{n-1}(L, \RR), \]
    as follows. For any $\zcal_1 \in \ucal$, consider a smooth path $\zcal:[0,1]\rightarrow \ucal$ connecting $\zcal_0$ to $\zcal_1$, and a smooth lifting $f:[0,1] \times L \rightarrow X$ (Definition \ref{def:smoothpathoflagrangians}) such that $f(0,\cdot) = f_0$. Note that such a lifting always can always be chosen by pre composing any lifting with some element of $\Diff(L)$ (Definition \ref{def:immersionsandimmersedforms}). Define
    \begin{equation}
        \label{eq:HWB-def-of-coordinate-chart}
        R_{f_0}(\zcal_1) := RF(f) \qquad \&\qquad S_{f_0}(\zcal_1) := DF(f).
    \end{equation}
    By Theorems~\ref{thm:HWB-rel-flux-well-defined} and \ref{thm:HWB-dual-flux-well-defined} $R_{f_0}$ and $D_{f_0}$ are well defined since $\ucal$ is simply-connected and depend only on the chosen pointed smooth coordinate chart $(\ucal, \zcal_0, f_0, \Phi)$.
\end{definition}

\begin{lemma}
\label{lemma:HWB-rel-and-dual-are-local-diffeo}
    Fix an immersed special Lagrangian $\zcal_0 \in \scal \lcal(X,L; \Lambda_1,\dots , \Lambda_d)$ and an immersion representing it $f_0: L \rightarrow X$. Let $\ucal \subset \scal \lcal$ be a simply-connected neighborhood of $\zcal_0$. After shrinking $\ucal$ if necessary the associated maps $R_{f_0}:\ucal \rightarrow H^1(L, \partial L; \RR)$ and $S_{f_0}:\ucal \rightarrow H^{n-1}(L, \RR)$ from Definition \ref{def:HWB-def-of-coordinate-chart} are diffeomorphisms. 
\end{lemma}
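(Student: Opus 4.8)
The plan is to apply the inverse function theorem. By Theorem~\ref{thm:HWB-tangentspacetospeciallagrangianswithboundary-2} the domain and both targets share the common dimension $m = \dim H^1(L,\partial L;\RR) = \dim H^{n-1}(L;\RR)$, so it suffices to show that each of $R_{f_0}$ and $S_{f_0}$ is smooth near $\zcal_0$ with invertible differential at $\zcal_0$; shrinking $\ucal$ then yields a diffeomorphism onto an open set. First I would pin down the differential. Fix a smooth path $\zcal:(-\epsilon,\epsilon)\to\ucal$ with $\zcal(0)=\zcal_0$ and a lifting $f$ with $f(0,\cdot)=f_0$, and write $\theta_t = f_t^*(i_{df_t/dt}\omega)$ and $\phi_t = f_t^*(i_{df_t/dt}\Im(\Omega))$ as in \eqref{eq:HWB-thm-1-theta-phi-notation}. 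Since the lifting starts at $f_0$, Definition~\ref{def:HWB-def-of-coordinate-chart} gives $R_{f_0}(\zcal_t) = [\int_0^t\theta_s\,ds]$ and $S_{f_0}(\zcal_t) = [\int_0^t\phi_s\,ds]$, and because passing to a cohomology class commutes with $t$-differentiation of the time integral,
\[
\frac{d}{dt}\Big|_{t=0} R_{f_0}(\zcal_t) = [\theta_0], \qquad \frac{d}{dt}\Big|_{t=0} S_{f_0}(\zcal_t) = [\phi_0].
\]
Thus $d(R_{f_0})_{\zcal_0}$ sends the tangent vector $\dot\zcal(0)$ to the relative de Rham class $[\theta_0]\in H^1(L,\partial L;\RR)$ of its Akveld--Salamon tangent $1$-form \eqref{eq:HWB-formula-for-derivative-of-path-of-Lagrangians}, while $d(S_{f_0})_{\zcal_0}$ sends it to the absolute class $[\phi_0]\in H^{n-1}(L;\RR)$.

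Next I would invoke boundary-value Hodge theory to see these differentials are isomorphisms. Under the identification of $T_{\zcal_0}\scal\lcal$ with the space of Dirichlet harmonic fields on $(L,f_0^*g)$ furnished by Theorem~\ref{thm:HWB-tangentspacetospeciallagrangianswithboundary-2} and Corollary~\ref{corollary:HWB-lifted-local-coordinate-chart}, the tangent vector $\dot\zcal(0)$ corresponds exactly to $\theta_0$, a closed and co-closed $1$-form vanishing on $\partial L$. The Hodge--Morrey--Friedrichs decomposition identifies the space of such Dirichlet harmonic fields with $H^1(L,\partial L;\RR)$ through $\theta\mapsto[\theta]$, so $d(R_{f_0})_{\zcal_0}$ is precisely this isomorphism and is invertible. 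For $S_{f_0}$, Lemma~\ref{prop:HWB-tangentspacetospeciallagrangianswithboundary-1} gives $\phi_0 = \star_0\theta_0$, hence $\phi_0$ is a Neumann harmonic $(n-1)$-field; the dual statement of the decomposition identifies Neumann harmonic $(n-1)$-fields with $H^{n-1}(L;\RR)$ via $\phi\mapsto[\phi]$, so $d(S_{f_0})_{\zcal_0}$ is also an isomorphism. Equivalently, $\star_0$ intertwines the two differentials and realizes the Poincar\'e--Lefschetz duality between $H^1(L,\partial L;\RR)$ and $H^{n-1}(L;\RR)$.

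For smoothness I would use the explicit chart of Corollary~\ref{corollary:HWB-lifted-local-coordinate-chart}, with coordinates $(t_1,\dots,t_m)$ and the prescribed basis $\theta_i = f_0^*(i_{\partial_i f}\omega)|_{t=0}$ of Dirichlet harmonic fields. In these coordinates the components of $R_{f_0}$ (and of $S_{f_0}$) are integrals over $[0,1]$ of smoothly varying forms along smoothly varying liftings, hence depend smoothly on $(t_1,\dots,t_m)$; and the computation above shows $d(R_{f_0})_0$ carries $\partial_{t_i}$ to $[\theta_i]$, a basis of $H^1(L,\partial L;\RR)$, so the differential is represented by an invertible matrix. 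The inverse function theorem then gives a neighborhood of $\zcal_0$ on which $R_{f_0}$, and likewise $S_{f_0}$, is a diffeomorphism onto its image.

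The step I expect to be the main obstacle is the rigorous bookkeeping behind the differential formula: verifying that $t\mapsto[\int_0^t\theta_s\,ds]$ is genuinely $C^1$ with derivative $[\theta_t]$ uniformly over the chart, so that the interchange of $t$-differentiation with both the time integral and the cohomology-class map is legitimate, and citing the correct Dirichlet/Neumann Hodge theory (Friedrichs--Morrey) to guarantee that $\theta\mapsto[\theta]$ and $\phi\mapsto[\phi]$ are \emph{isomorphisms} rather than merely injective or surjective. Once the differential is identified with these Hodge isomorphisms, invertibility and the application of the inverse function theorem are routine.
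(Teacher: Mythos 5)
Your proposal is correct and follows essentially the same route as the paper: compute the differential of $R_{f_0}$ and $S_{f_0}$ at $\zcal_0$ using the lifted coordinate chart of Corollary~\ref{corollary:HWB-lifted-local-coordinate-chart}, identify the image of the coordinate directions with the Dirichlet harmonic basis $\theta_i$ and its Hodge duals $\star_0\theta_i$ via Lemma~\ref{prop:HWB-tangentspacetospeciallagrangianswithboundary-1}, conclude invertibility from the Hodge-theoretic isomorphisms with $H^1(L,\partial L;\RR)$ and $H^{n-1}(L;\RR)$, and finish with the inverse function theorem. The only cosmetic difference is that you make the Hodge--Morrey--Friedrichs identification explicit, which the paper leaves implicit at this point and only cites later in Proposition~\ref{prop:HWB-isometric-embedding}.
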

\begin{proof}
    The strategy is to prove that the derivatives of $R_{f_0}$ and $S_{f_0}$ at the base point $\zcal_0$ are invertible, followed by an appeal to the inverse function theorem. Fix a basis $\theta_1,\dots \theta_m$ of 1-forms vanishing on the boundary, that are harmonic fields with respect to the pullback Riemannian metric $f^*_0 g$. By Corollary \ref{corollary:HWB-lifted-local-coordinate-chart}, after possibly shrinking $\ucal$, there is a coordinate chart $\Phi:\ucal \xrightarrow{\sim} U$ satisfying $\Phi(\zcal_0) = 0$, and a smooth map $f:U \times L \rightarrow X$ such that $f_t:= f(t,\cdot)$ is a free Lagrangian immersion representing the immersed Lagrangian $\Phi^{-1}(t)$, such that
    \[f_0^* \left( i_{\partial_i f} \omega \right) = \theta_i  \qquad \forall\ 1\le i\le m.\]
    Since $f(0,\cdot) = f_0$, the smooth lifting $f(se_i,\cdot)$ of special Lagrangian immersions can be used to compute $R_{f_0}(\Phi^{-1} se_i)$ and $S_{f_0}(\Phi^{-1} se_i)$. By Corollary \ref{corollary:HWB-lifted-local-coordinate-chart}(iii),
    \begin{align*}
        \lim_{h\rightarrow 0} \frac{R_{f_0}(\Phi^{-1}(he_i)) - R_{f_0}}{h} =& \lim_{h\rightarrow 0} \frac{1}{h} \int_{0}^h f_{se_i}^* \left( i_{\frac{df_{se_i}}{ds}} \omega \right) ds \\
        =&\  f_{0}^* \left( i_{\frac{df(se_i, \cdot)}{ds} |_{s=0}} \omega \right) \\
        =&\ f_{0}^* \left( i_{\partial_if(0,\cdot))} \omega \right)\\
        =&\ \theta_i,
    \end{align*}
    \begin{equation}
        \label{eq:HWB-der-of-rel-flux}
        \implies \frac{R_{f_0}}{\partial t_i}(\zcal_0) = \frac{R_{f_0}}{\partial t_i}(\Phi^{-1} 0) = \theta_i \qquad \forall\ 1\le i\le m.
    \end{equation}
    Similarly by Proposition~\ref{prop:HWB-tangentspacetospeciallagrangianswithboundary-1}, if $\star_0$ corresponds to the pullback metric $f_0^*g$,
    \[ f_{0}^* \left( i_{\frac{df(se_i, \cdot)}{ds} |_{s=0}}  \Im (\Omega) \right) = \star_0 f_{0}^* \left( i_{\frac{df(se_i, \cdot)}{ds} |_{s=0}} \omega \right) \]
    \begin{align*}
        \implies \qquad \lim_{h\rightarrow 0} \frac{S_{f_0}(he_i) - S_{f_0}}{h} =& \lim_{h\rightarrow 0} \frac{1}{h} \int_{0}^h f_{se_i}^* \left( i_{\frac{df_{se_i}}{ds}} \Im (\Omega) \right) ds \\
        =&\  f_{0}^* \left( i_{\frac{df(se_i, \cdot)}{ds} |_{s=0}}  \Im (\Omega) \right) \\
        =&\ \star_0 f_{0}^* \left( i_{\frac{df(se_i, \cdot)}{ds} |_{s=0}} \omega \right)\\
        =&\  \star_0  f_{0}^* \left( i_{\partial_if(0,\cdot))} \omega \right)\\
        =&\ \star_0 \theta_i,
    \end{align*}
     \begin{equation}
        \label{eq:HWB-der-of-dual-flux}
        \implies \frac{S_{f_0}}{\partial t_i}(\zcal_0) = \frac{S_{f_0}}{\partial t_i}(\Phi^{-1} 0) = \star_0 \theta_i \qquad \forall\ 1\le i\le m.
    \end{equation}
    Note that since $\theta_1,\dots, \theta_m$ form a basis for $H^1(L, \partial L; \RR)$, $\star_0 \theta_1,\dots, \star_0 \theta_m$ form a basis for $H^{n-1}(L;\RR)$. Conclude from \eqref{eq:HWB-der-of-rel-flux} and \eqref{eq:HWB-der-of-dual-flux} that the derivatives $dR_{f_0}$ and $dS_{f_0}$ are invertible. By the inverse function theorem, $R_{f_0}$ and $S_{f_0}$ are are diffeomorphisms onto their image after shrinking $\ucal$. 
\end{proof}

\begin{definition}
    \label{def:HWB-pointed-coordinate-chart}
    Fix an immersed special Lagrangian $\zcal_0 \in \scal \lcal(X,L; \Lambda_1,\dots , \Lambda_d)$ and an immersion representing it $f_0: L \rightarrow X$. By Lemma~\ref{lemma:HWB-rel-and-dual-are-local-diffeo}, there exists a simply-connected neighborhood $\ucal$ of $\zcal_0$ such that $R_{f_0}$ and $S_{f_0}$ are diffeomorphisms on $\ucal$. Any such triple $(\ucal, \zcal_0 ,f_0)$ is called a called a \textit{pointed coordinate chart} on $\scal \lcal(X,L; \Lambda_1,\dots , \Lambda_d)$.
\end{definition}

A change in the immersion representing the base point $f_0$ is an affine change of coordinates for the charts constructed in Definition \ref{def:HWB-pointed-coordinate-chart}.

\begin{lemma}
\label{lemma:HWB-change-of-immersion}
    Fix a free immersed special Lagrangian $\zcal_0 \in \scal \lcal(X,L; \Lambda_1,\dots , \Lambda_d)$ and a special Lagrangian immersion $f_0: L \rightarrow X$ representing $\zcal_0$. Fix a diffeomorphism $\psi \in \Diff(L)$, and let $\overline{f}_0 := f_0 \circ \psi$. Let $\ucal \subset \scal \lcal$ be a small simply-connected neighborhood of $\zcal_0$ such that $(\ucal, \zcal_0, f_0)$, $(\ucal, \zcal_0, \overline{f}_0)$ are both pointed coordinate charts (Definition \ref{def:HWB-pointed-coordinate-chart}). Then
    \begin{equation}
        \label{eq:HWB-change-of-immersion}
        \begin{aligned}
        R_{\overline{f}_0} \circ R_{f_0}^{-1} (\theta) =&\ \psi^* \theta \qquad \forall\ \theta \in R_{f_0}(\ucal) \subset H^1(L,\partial L; \RR), \\
        \&\quad S_{\overline{f}_0} \circ S_{f_0}^{-1} (\phi) =&\ \psi^* \phi \qquad   \forall\ \phi \in S_{f_0}(\ucal) \subset H^{n-1}(L ; \RR).
    \end{aligned}
    \end{equation}
\end{lemma}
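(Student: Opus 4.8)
The plan is to show that replacing $f_0$ by $\overline{f}_0 = f_0\circ\psi$ simply rescales the whole flux by $\psi^*$, by exhibiting a convenient lifting of the comparison path. Fix $\zcal_1\in\ucal$ and let $\zcal:[0,1]\to\ucal$ be a smooth path from $\zcal_0$ to $\zcal_1$ with a smooth lifting $f:[0,1]\times L\to X$ satisfying $f(0,\cdot)=f_0$, so that $R_{f_0}(\zcal_1)=RF(f)$ and $S_{f_0}(\zcal_1)=SF(f)$ by Definition~\ref{def:HWB-def-of-coordinate-chart}. The key observation is that $\bar f_t:=f_t\circ\psi$ defines a smooth lifting $\bar f$ of the \emph{same} path $\zcal$ starting at $\overline{f}_0$: each $\bar f_t$ is obtained from $f_t$ by the reparametrization $\psi\in\Diff(L)$ and hence represents the same immersed special Lagrangian $\zcal_t$ (Definition~\ref{def:immersionsandimmersedforms}). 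Since the fluxes are independent of the chosen lifting by Theorems~\ref{thm:HWB-rel-flux-well-defined} and \ref{thm:HWB-dual-flux-well-defined}, I may use $\bar f$ to compute $R_{\overline{f}_0}(\zcal_1)=RF(\bar f)$ and $S_{\overline{f}_0}(\zcal_1)=SF(\bar f)$.

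Next I would compare the integrands pointwise. Since $\psi$ does not depend on $t$, one has $\tfrac{d\bar f_t}{dt}=\tfrac{df_t}{dt}\circ\psi$ and $d\bar f_t=df_t\circ d\psi$, so a direct evaluation on a tangent vector $\xi_p\in T_pL$ gives
\[\bar f_t^*\Big(i_{\frac{d\bar f_t}{dt}}\omega\Big)(\xi_p)=\omega\Big(\tfrac{df_t}{dt}(\psi(p)),\,df_t(d\psi_p(\xi_p))\Big)=\psi^*\Big(f_t^*i_{\frac{df_t}{dt}}\omega\Big)(\xi_p).\]
This is exactly the computation in Lemma~\ref{lemma:HWB-akveld-salamon-change-of-lifting}, except that the $t$-independence of $\psi$ kills the extra term $df_t(\tfrac{d\psi_t}{dt})$ that appears there. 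The verbatim computation with $\Im(\Omega)$ in place of $\omega$ yields $\bar f_t^*\big(i_{\frac{d\bar f_t}{dt}}\Im(\Omega)\big)=\psi^*\big(f_t^*i_{\frac{df_t}{dt}}\Im(\Omega)\big)$.

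Finally I would integrate in $t$ and pass to cohomology. Because $\psi\in\Diff(L)$ preserves the boundary components (Definition~\ref{def:immersionsandimmersedforms}), $\psi^*$ is a well-defined linear automorphism of both $H^1(L,\partial L;\RR)$ and $H^{n-1}(L;\RR)$, and it commutes with integration in $t$ and with passage to cohomology classes. Hence $RF(\bar f)=\psi^*RF(f)$ and $SF(\bar f)=\psi^*SF(f)$, that is, $R_{\overline{f}_0}(\zcal_1)=\psi^*R_{f_0}(\zcal_1)$ and $S_{\overline{f}_0}(\zcal_1)=\psi^*S_{f_0}(\zcal_1)$. Writing $\theta=R_{f_0}(\zcal_1)$ and $\phi=S_{f_0}(\zcal_1)$ and letting $\zcal_1$ range over $\ucal$ gives \eqref{eq:HWB-change-of-immersion}.

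There is no serious obstacle here; the only points requiring care are the verification that $\bar f=f\circ\psi$ is a legitimate lifting of $\zcal$ starting at $\overline{f}_0$ (so that the well-definedness theorems apply and no $t$-derivative of $\psi$ intervenes), and the fact that $\psi^*$ genuinely preserves the relative cohomology, which follows from $\psi$ preserving each boundary component of $\partial L$. It is worth noting that the map $\theta\mapsto\psi^*\theta$ is linear, hence affine, which is the feature that Theorem~\ref{thm:HWB-affine-structure-on-moduli-space} will exploit when assembling these charts into an atlas.
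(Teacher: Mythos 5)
Your proposal is correct and follows essentially the same route as the paper: both construct the second lifting as $\overline{f}_t = f_t\circ\psi$ along the same path, observe that the $t$-independence of $\psi$ gives $\overline{f}_t^*\bigl(i_{\frac{d\overline{f}_t}{dt}}\omega\bigr)=\psi^*\bigl(f_t^* i_{\frac{df_t}{dt}}\omega\bigr)$ (and likewise with $\Im(\Omega)$), and then integrate in $t$ and pass to cohomology. The only cosmetic difference is that the paper writes out the pullback identity directly rather than invoking Lemma~\ref{lemma:HWB-akveld-salamon-change-of-lifting} as a reference point, but the computation is identical.
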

\begin{proof}
    Fix any point $\zcal_1\in \ucal$ and fix a smooth path of free immersed special Lagrangians $\zcal:[0,1] \rightarrow X$ connecting $\zcal_0$ and $\zcal_1$. Consider a smooth lifting of $\zcal$ (Definition \ref{def:smoothpathoflagrangians}), a smooth function
    \[f:[0,1]\times L \rightarrow X,\]
    such that $f(t,\cdot):L \rightarrow X$ is a special Lagrangian immersion representing $\zcal_t$ and $f(0,\cdot) = f_0$ is the given immersion representing $\zcal_0$. Let $f_t:=f(t,\cdot)$. Consider another lifting of the path $\zcal$
    \[\overline{f}:[0,1]\times L \rightarrow X, \quad \overline{f}(t,x) = f(t,\psi(x)) \qquad \forall\ t\in[0,1], x\in L. \]
    Note that $\overline{f}(t,\cdot):L \rightarrow X$ is also a special Lagrangian immersion representing $\zcal_t$ for all $t\in [0,1]$, since $\psi \in \Diff(L)$. Furthermore $\overline{f}(0,\cdot) = \overline{f}_0$ is the given immersion by assumption. Let $\overline{f}_t:= \overline{f}(t, \cdot)$ and note $\overline{f}_t = f_t\circ \psi$. By Definition \ref{def:HWB-def-of-coordinate-chart}, 
    \[R_{f_0}(\zcal_1) = \int_0^1 f_t^* \left( i_{\frac{df_t}{dt}} \omega \right)\]
    \begin{align*}
        \&\quad R_{\overline{f}_0}(\zcal_1) =&\ \int_0^1 \overline{f}_t^* \left( i_{\frac{d\overline{f}_t}{dt}} \omega \right) dt \\
        =&\ \int_0^1 \psi^* {f}_t^* \left( i_{\frac{d{f}_t}{dt} \circ \psi} \omega \right) dt \\
        =&\ \psi^* R_{{f}_0}(\zcal_1)
    \end{align*}
    \begin{equation}
        \label{eq:HWB-change-of-immersion-1}
        \implies \qquad R_{\overline{f}_0} \circ R_{f_0}^{-1} (\theta) = \psi^* \theta \qquad \forall\ \theta \in R_{f_0}(\ucal)  \subset H^1(L,\partial L; \RR).
    \end{equation}
    Similarly, by Definition \ref{def:HWB-def-of-coordinate-chart}, 
    \[S_{f_0}(\zcal_1) = \int_0^1 f_0^* \left( i_{\frac{df_t}{dt}} \Im(\Omega) \right)\]
    \begin{align*}
        \&\quad R_{\overline{f}_0}(\zcal_1) =&\ \int_0^1 \overline{f}_t^* \left( i_{\frac{d\overline{f}_t}{dt}} \Im(\Omega) \right) dt \\
        =&\ \int_0^1 \psi^* {f}_t^* \left( i_{\frac{d{f}_t}{dt} \circ \psi} \Im(\Omega)  \right) dt \\
        =&\ \psi^* S_{{f}_0}(\zcal_1)
    \end{align*}
    \begin{equation}
        \label{eq:HWB-change-of-immersion-2}
        \implies \qquad S_{\overline{f}_0} \circ S_{f_0}^{-1} (\phi) = \psi^* \phi \qquad \forall\ \phi \in S_{f_0}(\ucal) \subset H^{n-1}(L ; \RR). 
    \end{equation}
    Lemma~\ref{lemma:HWB-change-of-immersion} follows from \eqref{eq:HWB-change-of-immersion-1} and \eqref{eq:HWB-change-of-immersion-2}.
\end{proof}

The typical transition map is a change of base point $(\zcal_0,f_0)$ within a pointed coordinate chart $(\ucal, \zcal_0, f_0)$. The corresponding transition maps are affine map and have a very simple form.

\begin{lemma}
\label{lemma:HWB-change-of-basepoint}
    Fix a simply connected open neighborhood $\ucal \subset \scal\lcal(X,L; \Lambda_1,\dots, \Lambda_d)$, two immersed special Lagrangians $\zcal_0, \overline{\zcal}_0 \in \ucal$, and two special Lagrangian immersions representing them respectively $f_0, \overline{f}_0: L \rightarrow X$. Assume that $(\ucal, \zcal_0, f_0)$ and $(\ucal, \overline{\zcal}_0, \overline{f}_0)$ are both pointed coordinate charts (Definition \ref{def:HWB-pointed-coordinate-chart}). Then there exists $\psi \in \Diff(L)$ such that for all $\zcal_1 \in \ucal$,
    \[\begin{aligned}
        R_{\overline{f}_0}(\zcal_1) \circ R_{f_0}^{-1}(\theta) =&\ (\psi^{-1})^*\left( \theta - R_{f_0}(\overline{\zcal}_0) \right) \qquad \forall\ \theta \in R_{f_0}(\ucal) \subset H^1(L, \partial L; \RR) \\
        S_{\overline{f}_0}(\zcal_1) \circ S_{f_0}^{-1}(\phi) =&\ (\psi^{-1})^*\left( \phi- S_{f_0}(\overline{\zcal}_0) \right) \qquad \forall\ \phi \in R_{f_0}(\ucal) \subset H^{n-1}(L; \RR).
    \end{aligned}\]
    The homotopy class of $\psi$, and hence the induced map on cohomology $\psi^*$, depends only on $\ucal$, $f_0$ and $\overline{f}_0$.
\end{lemma}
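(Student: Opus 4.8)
The plan is to reduce the change of basepoint to two ingredients that are already available: the additivity of the relative (and special) Lagrangian flux under concatenation of paths, and the change-of-immersion formula of Lemma~\ref{lemma:HWB-change-of-immersion}. First I would fix, once and for all, a smooth path $\alpha:[0,1]\to\ucal$ from $\zcal_0$ to $\overline{\zcal}_0$ together with a smooth lifting $a:[0,1]\times L\to X$ (Definition~\ref{def:smoothpathoflagrangians}) normalized so that $a(0,\cdot)=f_0$. Writing $g_0:=a(1,\cdot)$, this is a special Lagrangian immersion representing $\overline{\zcal}_0$, so by freeness (Definition~\ref{def:immersionsandimmersedforms}) there is a unique $\psi\in\Diff(L)$ with $g_0=\overline{f}_0\circ\psi$. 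This $\psi$ will be the diffeomorphism claimed in the statement.

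Next I would establish the additivity. Given $\zcal_1\in\ucal$, choose a path $\beta:[0,1]\to\ucal$ from $\overline{\zcal}_0$ to $\zcal_1$ with a lifting $b$ satisfying $b(0,\cdot)=g_0$, and form the concatenation $\alpha*\beta$, a path from $\zcal_0$ to $\zcal_1$, with the glued lifting $a*b$ starting at $f_0$. Since $\ucal$ is simply connected, Theorems~\ref{thm:HWB-rel-flux-well-defined} and \ref{thm:HWB-dual-flux-well-defined} let me compute $R_{f_0}(\zcal_1)$ and $S_{f_0}(\zcal_1)$ along this concatenation, and splitting the defining integral \eqref{eq:HWB-intro-rel-flux} at the junction gives
\[
R_{f_0}(\zcal_1)=R_{f_0}(\overline{\zcal}_0)+R_{g_0}(\zcal_1),
\]
and likewise $S_{f_0}(\zcal_1)=S_{f_0}(\overline{\zcal}_0)+S_{g_0}(\zcal_1)$, where $R_{g_0}(\zcal_1)$ is the flux of $\beta$ along the lifting $b$ based at $g_0$. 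Applying Lemma~\ref{lemma:HWB-change-of-immersion} with base immersion $\overline{f}_0$ and the relation $g_0=\overline{f}_0\circ\psi$ yields $R_{g_0}(\zcal_1)=\psi^*R_{\overline{f}_0}(\zcal_1)$, hence
\[
R_{\overline{f}_0}(\zcal_1)=(\psi^{-1})^*R_{g_0}(\zcal_1)=(\psi^{-1})^*\big(R_{f_0}(\zcal_1)-R_{f_0}(\overline{\zcal}_0)\big),
\]
which is the asserted formula with $\theta=R_{f_0}(\zcal_1)$; the special Lagrangian case is verbatim identical with $\omega$ replaced by $\Im(\Omega)$.

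It remains to show that the homotopy class of $\psi$ depends only on $\ucal$, $f_0$, and $\overline{f}_0$. For this I would vary the auxiliary path $\alpha$: any two choices are homotopic rel endpoints because $\ucal$ is simply connected, and lifting such a homotopy---adjusting the lifting as in Claim~\ref{claim:HWB-rel-flux-claim1} so that the $\zcal_0$-end stays equal to $f_0$---produces a smooth family of terminal immersions $g_0^s$, all representing $\overline{\zcal}_0$. Freeness makes the reparametrizations $\psi^s$ defined by $g_0^s=\overline{f}_0\circ\psi^s$ depend smoothly on $s$, so the endpoints $\psi^0$ and $\psi^1$ are homotopic in $\Diff(L)$. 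As an independent check, since $R_{f_0}$ is a diffeomorphism onto an open set, the differences $R_{f_0}(\zcal_1)-R_{f_0}(\overline{\zcal}_0)$ range over a neighborhood of $0$ spanning $H^1(L,\partial L;\RR)$, so the induced linear map $\psi^*$ is in any case pinned down by the formula itself.

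The routine steps are the integral splitting and the invocation of Lemma~\ref{lemma:HWB-change-of-immersion}; the one genuinely delicate point is the last paragraph---verifying that the diffeomorphism $\psi$, which a priori is manufactured from an arbitrary path and lifting, has a well-defined homotopy class. The hard part will be handling the lifting of a homotopy of paths while keeping the $\zcal_0$-endpoint normalized and extracting the smooth dependence of the reparametrizations $\psi^s$ from freeness of the immersions.
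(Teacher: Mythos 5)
Your proposal is correct and follows essentially the same route as the paper: fix one auxiliary path from $\zcal_0$ to $\overline{\zcal}_0$ lifted from $f_0$, read off $\psi$ from the terminal immersion via freeness, split the flux integral over the concatenation, and convert $R_{\overline{f}_0\circ\psi}$ to $\psi^*R_{\overline{f}_0}$ by Lemma~\ref{lemma:HWB-change-of-immersion}. Your final paragraph on the well-definedness of the homotopy class of $\psi$ is precisely the content of the paper's Proposition~\ref{prop:HWB-homotopy-class-independent}, proved there exactly as you outline (lift the rel-endpoint homotopy via Claim~\ref{claim:HWB-rel-flux-claim1}, normalize the $\zcal_0$-end, and use freeness to extract the smooth family $\psi^s$).
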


The homotopy class of $\psi \in \Diff(L)$ in the statement of Lemma~\ref{lemma:HWB-change-of-basepoint} is first identified by Proposition~\ref{prop:HWB-homotopy-class-independent}.

\begin{proposition}
\label{prop:HWB-homotopy-class-independent}
    Fix a simply connected open set $\ucal \subset \scal\lcal(X,L; \Lambda_1,\dots, \Lambda_d)$, two immersed special Lagrangians $\zcal_0, \overline{\zcal}_0 \in \ucal$, and two special Lagrangian immersions representing them respectively $f_0, \overline{f}_0: L \rightarrow X$.  Fix any smooth function $f:[0,1]\times L \rightarrow X$ such that $f(t,\cdot):L \rightarrow X$ is a path of free special Lagrangian immersion representing an immersed Lagrangian in $\ucal$ connecting $\zcal_0$ and $\overline{\zcal}_0$ (Figure~\ref{fig:HWB-change-of-basepoint}). For some $\psi \in \Diff(L)$, assume that
    \[f(0,\cdot) = f_0, \qquad \&\quad f(1,\cdot) = \overline{f}_0 \circ \psi. \]
    Then the homotopy class of $\psi$ is independent of the chosen path $f$, and depends only on the open set $\ucal$, and the end points $f_0$ and $\overline{f}_0$ of $f$. 
\end{proposition}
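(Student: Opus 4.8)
The plan is to reduce the statement to a lifting argument that exploits the simple-connectedness of $\ucal$ and reuses the normalization already established in Claim~\ref{claim:HWB-rel-flux-claim1}. First I would record that the $\psi$ named in the hypothesis is in fact uniquely determined by the path $f$: since $f(1,\cdot)$ and $\overline{f}_0$ both represent the immersed special Lagrangian $\overline{\zcal}_0$, Definition~\ref{def:immersionsandimmersedforms} produces some $\psi \in \Diff(L)$ with $f(1,\cdot) = \overline{f}_0 \circ \psi$, and freeness of the immersions (triviality of the stabilizer) forces this $\psi$ to be the only such diffeomorphism. Thus to each admissible path $f$ there is attached a well-defined $\psi = \psi_f$, and the assertion becomes the following: any two admissible paths $f$ and $g$, both starting at $f(0,\cdot) = g(0,\cdot) = f_0$ and both ending over $\overline{\zcal}_0$, yield diffeomorphisms $\psi_f, \psi_g$ lying in the same component of $\Diff(L)$.

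Next I would pass to the moduli space. Writing $\zcal^f_t := [f(t,\cdot)]$ and $\zcal^g_t := [g(t,\cdot)]$, these are two smooth paths in $\ucal$ from $\zcal_0$ to $\overline{\zcal}_0$; since $\ucal$ is simply connected they are smoothly homotopic with fixed end points by a homotopy $\hcal:[0,1]^2 \rightarrow \ucal$ satisfying $\hcal(0,t) = \zcal^f_t$, $\hcal(1,t) = \zcal^g_t$, $\hcal(u,0) = \zcal_0$, and $\hcal(u,1) = \overline{\zcal}_0$. Because $[0,1]^2$ is simply connected, Definition~\ref{def:smoothpathoflagrangians} furnishes a smooth lift $F:[0,1]^2 \times L \rightarrow X$ by free special Lagrangian immersions. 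The hypotheses $f(0,\cdot) = g(0,\cdot) = f_0$ are precisely what Claim~\ref{claim:HWB-rel-flux-claim1} requires (with the roles of $f$ and $\bar f$ played by our $f$ and $g$), so after reparametrizing $F$ as in that claim I may assume
\[ F_{u,0} = f_0 \quad \forall\ u\in[0,1], \qquad F_{0,t} = f(t,\cdot),\quad F_{1,t} = g(t,\cdot) \quad \forall\ t\in[0,1]. \]

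The final step extracts the desired isotopy at the far end $t=1$. Since $\hcal(u,1) = \overline{\zcal}_0$ for every $u$, each $F_{u,1}$ represents $\overline{\zcal}_0$, so as above there is a unique $\Psi_u \in \Diff(L)$ with $F_{u,1} = \overline{f}_0 \circ \Psi_u$. Evaluating at the endpoints gives $\Psi_0 = \psi_f$ and $\Psi_1 = \psi_g$, so $u \mapsto \Psi_u$ is the sought path in $\Diff(L)$ connecting $\psi_f$ to $\psi_g$; in particular they are homotopic and induce the same map $\psi_f^* = \psi_g^*$ on cohomology, which is all Lemma~\ref{lemma:HWB-change-of-basepoint} needs. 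I expect the main obstacle to be verifying that $u \mapsto \Psi_u$ is continuous (indeed smooth), i.e. that the reparametrizations assemble into an honest path in $\Diff(L)$ rather than merely a $u$-indexed family of diffeomorphisms. This is exactly where freeness is essential: triviality of the stabilizer makes $\Psi_u$ the unique solution of $F_{u,1} = \overline{f}_0 \circ \Psi_u$, and smooth dependence on $u$ follows by locally inverting the immersion $\overline{f}_0$ (an immersion is a local embedding) and solving $\Psi_u(x) = \overline{f}_0^{-1}(F(u,1,x))$ chart-by-chart near $\Psi_{u_0}(x)$, the local solutions patching together by uniqueness. Granting this regularity, the proposition follows.
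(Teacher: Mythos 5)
Your proposal is correct and follows essentially the same route as the paper: homotope the two paths inside the simply connected $\ucal$, lift the homotopy and normalize it via Claim~\ref{claim:HWB-rel-flux-claim1}, and read off the isotopy $u \mapsto \Psi_u$ at $t=1$ using freeness of $\overline{f}_0$. Your extra care about why $u\mapsto\Psi_u$ is actually a smooth path in $\Diff(L)$ (local inversion of the immersion plus uniqueness from the trivial stabilizer) is a detail the paper glosses over, but the argument is the same.
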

\begin{proof}
    Let $\widetilde{f}:[0,1]\times L \rightarrow X$ be another such smooth path connecting $\zcal_0$ and $\overline{\zcal}_0$, satisfying
    \[[\widetilde{f}(t,\cdot)] \in \ucal, \qquad  \widetilde{f}(0,\cdot) = f_0, \qquad \&\quad \widetilde{f}(1,\cdot) = \overline{f}_0 \circ \widetilde{\psi}, \]
    for some $\overline{\psi} \in \Diff(L)$ (Definitions~\ref{def:immersionsandimmersedforms}). Since $\ucal$ is simply connected the paths, $[f_t]$ and $[\widetilde{f}_t]$ in $\ucal$ are homotopic with fixed end-points, by a smooth homotopy $\hcal$ satisfying 
    \begin{equation*}
    \begin{aligned}
    \hcal(0,t) = [f(t,\cdot)] \in \ucal, \qquad \qquad \quad &\hcal(1,t)  = [\widetilde{f}(t,\cdot)], \\
    \quad \hcal(u,0) = [f_0] = \zcal_0, \qquad \& \quad &\hcal(u,1) = [f(1,\cdot)] = [\widetilde{f}(1,\cdot)] = \overline{\zcal}_0, \qquad \forall\ t,u\in [0,1].
    \end{aligned}
    \end{equation*}
    By Claim~\ref{claim:HWB-rel-flux-claim1}, there exists a smooth lifting $F:[0,1]^2 \times L \rightarrow X$ of the homotopy $\hcal$, such that $F_{u,t}:= F(u,t,\cdot): L \rightarrow X$ is a free special Lagrangian immersion, satisfying
    \begin{equation*}
        F_{u,0}  = f_0, \qquad  F_{0,t} = f(t,\cdot),\qquad F_{1,t} = \widetilde{f}(t,\cdot), \qquad [F_{u,t}]  =\hcal_{u,t} \in \ucal \qquad  \forall\ u,t\in [0,1].
    \end{equation*}
    By assumption $[F_{u,1}] = \hcal_{u,1} = \zcal_1 = \widetilde{\zcal}_1 = \overline{\zcal}_0$ and so $F_{u,1}$ and $\overline{f}_0$ represent the same immersed special Lagrangian. The exists a smooth family $\psi_u \in \Diff(L)$ such that
    \[ F_{u,1} = \overline{f}_0 \circ \psi_u \qquad \forall\ u\in [0,1].\]
    Since $\overline{f}_0$ is a free immersion,
    \[\overline{f}_0 \circ \psi_0 = F_{0,1} = f_1 = \overline{f}_0 \circ \psi \qquad \& \qquad \overline{f}_0 \circ \psi_1 = F_{1,1} = \widetilde{f}_1 = \overline{f}_0 \circ \widetilde{\psi}, \]
    \[\implies \qquad \psi_0 = \psi \qquad \& \qquad \psi_1 = \widetilde{\psi}.\]
    But $\psi_u$ is a homotopy between $\psi$ and $\widetilde{\psi}$. Since the paths $f$ and $\widetilde{f}$ were arbitrary, conclude that the homotopy class of $\psi$ is independent of the chosen path $f$ and lifting and depends only on the $\ucal$ and the endpoints $f_0, \overline{f}_0$ of $f$.
\end{proof}

\begin{figure}[thbp]
    \centering
    \begin{tikzpicture}
        \draw[thick] (0,-0.1) circle (2.2);
        \filldraw (-1,-1) circle (0.03) node[below] {$\zcal_0$};
        \filldraw (1,-1) circle (0.03) node[below] {$\overline{\zcal}_0$};
        \filldraw (0,1) circle (0.03) node[above] {$\zcal_2$};
        \draw[thick] plot [smooth] coordinates {(-1,-1) (-0.5,-0.95) (0,-1.05) (0.5,-0.95) (1,-1)};
        \draw[thick] plot [smooth] coordinates {(1,-1) (0.7,-0.55) (0.55,-0.05) (0.2,0.55) (0,1)};
        \draw (0,-1.3) node {${f}_t$};
        \draw (0.8,0) node {$\widehat{f}_t$};
        \draw (2.5,0) node {$\ucal$};
    \end{tikzpicture}
    \caption{Change of base point from $(\zcal_0,f_0)$ to $(\overline{\zcal}_0 , \overline{f}_0)$.}
    \label{fig:HWB-change-of-basepoint}
\end{figure}

\begin{proof}[Proof of Lemma~\ref{lemma:HWB-change-of-basepoint}]
Fix a smooth path of special Lagrangian immersions in $\ucal$ connecting $\zcal_0$ and $\overline{\zcal}_0$, say ${\zcal}:[0,1]\rightarrow \ucal$ with $\zcal_1 := \zcal(1) = \overline{\zcal}_0$. Consider a smooth lifting of the path ${\zcal}$, say ${f}:[0,1]\times L \rightarrow X$ satisfying  (Definitions~\ref{def:immersionsandimmersedforms} and \ref{def:immersedlagrangiansandspeciallagrangians})
\begin{equation}
    \label{eq:HWB-change-of-basepoint-f-assumption}
     f(0,\cdot) = f_0, \qquad {f}_t:= {f}(t,\cdot), \ [\overline{f}_t] = \zcal_t \in \ucal \quad \forall\ t\in [0,1], \qquad {f}_1 = \overline{f}_0 \circ \psi 
\end{equation}
for some $\psi \in \Diff(L)$ depending on the choice of path ${\zcal}$ as well as the lifting ${f}$.

Consider a generic element $\zcal_2 \in \ucal$. Fix a path of special Lagrangian immersions connecting $\overline{\zcal}_0$ to $\zcal_2$ in $\ucal$ (Figure~\ref{fig:HWB-change-of-basepoint}), a smooth function $\widehat{f}:[0,1]\times L \rightarrow X$ satisfying
\begin{equation}
    \label{eq:HWB-change-of-basepoint-f-tilde-assumption}
     \widehat{f}_t:= \widehat{f}(t,\cdot),\quad \widehat{f}_0 = \overline{f}_0 \circ \psi, \quad [\widehat{f}_t] \in \ucal \quad \forall\ t\in [0,1], \quad [\widehat{f}_1] = \zcal_2, 
\end{equation}
where $\psi\in \Diff(L)$ is as in \eqref{eq:HWB-change-of-basepoint-f-assumption}. The choice $\widehat{f}_0 = f_1 \circ \psi$ ensures that the paths ${f}_t$ and $\widehat{f}_t$ can be continuously concatenated. Compute $R_{\overline{f}_0  \circ \psi }(\zcal_2)$ and $S_{\overline{f}_0 \circ \psi }(\zcal_2)$ using the path $\widehat{f}_t$ joining $\overline{\zcal}_0$ and $\zcal_2$. By Definition~\ref{def:HWB-def-of-coordinate-chart}
\[ \begin{aligned}
    R_{\overline{f}_0\circ \psi }(\zcal_2) = R_{\widehat{f}_0} =&\ \int_0^1 \widehat{f}_t^* \left( i_{\partial_t \widehat{f}_t} \omega \right) dt \\
    \&\qquad S_{\overline{f}_0\circ \psi }(\zcal_2) = S_{\widehat{f}_0} =&\ \int_0^1 \widehat{f}_t^* \left( i_{\partial_t \widehat{f}_t} \Im(\Omega) \right) dt.
\end{aligned} \]
Similarly compute $R_{f_0}(\zcal_2)$ and $S_{f_0}(\zcal_2)$ using the concatenation of the paths ${f}_t$ and $\widehat{f}_t$ (Figure~\ref{fig:HWB-change-of-basepoint}). By Definition \ref{def:HWB-def-of-coordinate-chart}, Lemma~\ref{lemma:HWB-change-of-immersion}, \eqref{eq:HWB-change-of-basepoint-f-assumption} and \eqref{eq:HWB-change-of-basepoint-f-tilde-assumption},
\[ \begin{aligned}
    R_{f_0}(\zcal_2) =&\ \int_0^1 {f}_t^* \left( i_{\partial_t {f}_t} \omega \right) dt + \int_0^1 \widehat{f}_t^* \left( i_{\partial_t \widehat{f}_t} \omega \right) dt \\
    =&\ R_{f_0}(\zcal_1) + R_{\overline{f}_0\circ \psi} (\zcal_2) \\
    =&\ R_{f_0}(\overline{\zcal}_0) + \psi^* R_{\overline{f}_0}(\zcal_2)
\end{aligned} \]
\begin{equation}
\label{eq:HWB-change-of-basepoint-1}
    \implies \quad R_{\overline{f}_0}(\zcal_2) \circ R_{f_0}^{-1}(\theta) = (\psi^{-1})^*\left( \theta - R_{f_0}(\overline{\zcal}_0) \right) \quad \forall\ \theta \in R_{f_0}(\ucal) \subset H^1(L, \partial L; \RR),
\end{equation}

\[ \begin{aligned}
   \&\qquad  S_{f_0}(\zcal_2) =&\ \int_0^1 {f}_t^* \left( i_{\partial_t {f}_t} \Im(\Omega) \right) dt + \int_0^1 \widehat{f}_t^* \left( i_{\partial_t \widehat{f}_t} \Im(\Omega) \right) dt \\
    =&\ S_{f_0}(\zcal_1) + S_{\overline{f}_0\circ \psi} (\zcal_2) \\
    =&\ S_{f_0}(\overline{\zcal}_0) + \psi^* S_{\overline{f}_0}(\zcal_2)
\end{aligned} \]
\begin{equation}
\label{eq:HWB-change-of-basepoint-2}
    \implies \quad S_{\overline{f}_0}(\zcal_2) \circ S_{f_0}^{-1}(\phi) = (\psi^{-1})^*\left( \phi - S_{f_0}(\overline{\zcal}_0) \right) \quad \forall\ \phi \in S_{f_0}(\ucal) \subset H^{n-1}(L; \RR).
\end{equation}

Lemma~\ref{lemma:HWB-change-of-basepoint} follows from Claim~\ref{prop:HWB-homotopy-class-independent}, \eqref{eq:HWB-change-of-basepoint-1} and \eqref{eq:HWB-change-of-basepoint-2}. 
\end{proof}

A special case of Lemma~\ref{lemma:HWB-change-of-basepoint} is when the two given special Lagrangian immersions $f_0,\overline{f}_0$ are connected by a path of special Lagrangian immersions. In this case the corresponding transition maps are simply translations.

\begin{corollary}
\label{corollary:HWB-change-of-basepoint-special-case}
    Let $\ucal \subset \scal \lcal(X,L; \Lambda_1,\dots , \Lambda_d)$ be simply-connected and fix $\zcal_0, \zcal_1 \in \ucal$. Let $f:[0,1]\times L \rightarrow X$ be a smooth map such that $f_t := f(t,\cdot)$ is a free immersed special Lagrangian with boundary conditions $ \Lambda_1,\dots , \Lambda_d$ such that
    \[[f_0] = \zcal_0,\quad [f_1] = \zcal_1, \quad \&\quad [f_t] \in \ucal.\]
    Then
    \[\begin{aligned}
        R_{f_0}(\zcal_2) =&\ R_{f_1}(\zcal_1) + R_{f_0}(\zcal_1) \\
        S_{f_0}(\zcal_2) =&\ S_{f_1}(\zcal_1) + S_{f_0}(\zcal_1)
    \end{aligned} \qquad \forall\ \zcal_2 \in \ucal.\]
\end{corollary}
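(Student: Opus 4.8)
The plan is to obtain this as the degenerate case of Lemma~\ref{lemma:HWB-change-of-basepoint} in which the connecting diffeomorphism $\psi$ is the identity; equivalently, to argue directly by concatenating paths. I find the direct argument most transparent, so I would run it first and then note that it is literally the specialization $\psi=\id_L$ of Lemma~\ref{lemma:HWB-change-of-basepoint}. (Note that the right-hand sides as printed are independent of $\zcal_2$; the intended identities are $R_{f_0}(\zcal_2)=R_{f_1}(\zcal_2)+R_{f_0}(\zcal_1)$ and likewise for $S$, which is what the proof below establishes.)

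For the direct argument, fix an arbitrary $\zcal_2\in\ucal$ and choose a smooth path $g:[0,1]\times L\to X$ of free special Lagrangian immersions in $\ucal$ with $g(0,\cdot)=f_1$ and $[g(1,\cdot)]=\zcal_2$. Such a lift, starting \emph{exactly} at the representative $f_1$ of $\zcal_1$, exists by precomposing any lift with a suitable element of $\Diff(L)$, exactly as in Definition~\ref{def:HWB-def-of-coordinate-chart}. Concatenating the given path $f_t$ (from $\zcal_0$ to $\zcal_1$) with $g_t$ (from $\zcal_1$ to $\zcal_2$) produces a piecewise-smooth lift, which becomes smooth after a time reparametrization, of a path from $\zcal_0$ to $\zcal_2$ in $\ucal$ that starts at $f_0$. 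Since $\ucal$ is simply connected, Theorems~\ref{thm:HWB-rel-flux-well-defined} and~\ref{thm:HWB-dual-flux-well-defined} permit me to compute $R_{f_0}(\zcal_2)$ and $S_{f_0}(\zcal_2)$ along this concatenated lift.

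The defining integrals $\int_0^1 f_t^* i_{\partial_t f_t}\omega\,dt$ and $\int_0^1 f_t^* i_{\partial_t f_t}\Im(\Omega)\,dt$ are additive under concatenation and invariant under reparametrization of the time interval (a change $t=\phi(\tau)$ rescales $\partial_\tau$ and $d\tau$ reciprocally, leaving each integral unchanged). Hence the flux along the concatenation splits as the sum of the flux along $f_t$ and the flux along $g_t$. The first summand is precisely $R_{f_0}(\zcal_1)$ (respectively $S_{f_0}(\zcal_1)$) by Definition~\ref{def:HWB-def-of-coordinate-chart}, and the second is $R_{f_1}(\zcal_2)$ (respectively $S_{f_1}(\zcal_2)$), since $g_t$ is a lift of a path from $\zcal_1$ to $\zcal_2$ beginning at the representative $f_1$. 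This yields $R_{f_0}(\zcal_2)=R_{f_0}(\zcal_1)+R_{f_1}(\zcal_2)$ and the analogous identity for $S$, as desired.

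The one delicate point, and the reason this is only a \emph{special case} of Lemma~\ref{lemma:HWB-change-of-basepoint}, is that the junction of the two paths must use the \emph{same} parametrized immersion $f_1$ on both sides. This is exactly the hypothesis that the given path terminates at $\overline{f}_0=f_1$ rather than at some reparametrization $f_1\circ\psi$. In the language of Lemma~\ref{lemma:HWB-change-of-basepoint} and Proposition~\ref{prop:HWB-homotopy-class-independent}, taking the defining path to be $f_t$ itself gives the relation $f_1=f_1\circ\psi$, and freeness of $f_1$ (Definition~\ref{def:immersionsandimmersedforms}) forces $\psi=\id_L$; the factor $(\psi^{-1})^*$ then drops out of the transition formula, leaving a pure translation. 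If one only knew $[f(1,\cdot)]=\zcal_1$ without controlling the representative, the second path would start at $f_1\circ\psi$ for a possibly nontrivial $\psi$, reintroducing $(\psi^{-1})^*$ and recovering the general affine transition of Lemma~\ref{lemma:HWB-change-of-basepoint}. I expect no analytic difficulty beyond this bookkeeping of base immersions, since all the substantive work (homotopy invariance and well-definedness of the flux) is already carried out in Theorems~\ref{thm:HWB-rel-flux-well-defined} and~\ref{thm:HWB-dual-flux-well-defined} and in Lemma~\ref{lemma:HWB-change-of-basepoint}.
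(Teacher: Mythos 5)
Your proposal is correct and takes essentially the same route as the paper, which obtains the corollary by specializing Lemma~\ref{lemma:HWB-change-of-basepoint} to $\overline{\zcal}_0=\zcal_1$, $\overline{f}_0=f_1$ and noting that freeness forces $\psi=\id_L$; your direct concatenation argument merely unwinds the proof of that lemma in this special case. You are also right to flag the typo in the displayed conclusion: the intended identities are $R_{f_0}(\zcal_2)=R_{f_1}(\zcal_2)+R_{f_0}(\zcal_1)$ and $S_{f_0}(\zcal_2)=S_{f_1}(\zcal_2)+S_{f_0}(\zcal_1)$.
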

\begin{proof}
    Taking $ \overline{\zcal}_0  =\zcal_1$ and $\overline{f}_0 = f_1$ in Lemma~\ref{lemma:HWB-change-of-basepoint}, it follows that $\psi = 0$. Then Corollary~\ref{corollary:HWB-change-of-basepoint-special-case} follows immediately from Lemma~\ref{lemma:HWB-change-of-basepoint}.
\end{proof}

Lemma~\ref{lemma:HWB-change-of-basepoint} deals with the transition maps between two coordinate charts $\ucal_0,\ucal_1$ when each charts contains both base points $(\zcal_0, f_0)$ and $(\zcal_1, f_1)$. In general, the base point of the second coordinate chart might lie outside the first one. In this case the strategy is to apply a change of base point twice, first shifting from $\zcal_0$ to a common base point $\zcal_2 \in \ucal_0 \cap \ucal_1$, and then shifting and then shifting from $\zcal_2$ to $\zcal_1$ (Figure~\ref{fig:HWB-change-of-chart}). This change of base point a priori depends on an arbitrarily chosen point $\zcal_2 \in \ucal_0 \cap \ucal_1$. Fortunately it can be shown that the transition maps are independent of this choice.

\begin{lemma}
\label{lemma:HWB-change-of-chart}
    Fix immersed special Lagrangians $\zcal_0, \zcal_1 \in \scal \lcal(X,L; \Lambda_1,\dots , \Lambda_d)$ and respective immersions representing them $f_0, f_1: L \rightarrow X$. Let $\ucal_0, \ucal_1 \subset \scal \lcal$ respectively be simply-connected neighborhoods of $\zcal_0, \zcal_1$ such that $(\ucal_0, \zcal_0, f_0)$ and $(\ucal_1, \zcal_1, f_1)$ is a pair of pointed coordinate charts. Assume that $\ucal_0 \cap \ucal_1$ is connected. Then 
    \begin{equation}
        \label{eq:HWB-change-of-chart-main}
        \begin{aligned}
         R_{f_1}\circ R_{f_0}^{-1}(\theta) = (\psi_0 \circ \psi_1^{-1})^* \theta - (\psi_0\circ \psi_1)^* R_{f_0}(\zcal_2) + \psi_1^*R_{f_1}(\zcal_2)& \\
          \qquad \forall\ \theta \in H^1(L, \partial L; \RR),& \\
          S_{f_1}\circ S_{f_0}^{-1}(\phi) = (\psi_0 \circ \psi_1^{-1})^* \phi - (\psi_0\circ \psi_1)^* S_{f_0}(\zcal_2) + \psi_1^*S_{f_1}(\zcal_2) & \\
          \qquad  \forall\ \phi \in H^{n-1}(L; \RR).&
    \end{aligned} 
    \end{equation}
    for some $\psi_0, \psi_1 \in \Diff(L)$ and $\zcal_2 \in \ucal_0, \ucal_1$. Furthermore the homotopy class of $\psi_0 \circ \psi_1^{-1}$ depends only on $\ucal_0 $, $ \ucal_1$, $f_0 $ and $ f_1$, and the cohomology classes
    \[ (\psi_0\circ \psi_1)^* R_{f_0}(\zcal_2) - \psi_1^*R_{f_1}(\zcal_2) \quad \& \quad (\psi_0\circ \psi_1)^* S_{f_0}(\zcal_2) - \psi_1^*S_{f_1}(\zcal_2) \]
    are independent of $\zcal_2$ and depend only on $\ucal_0, \ucal_1$, $f_0$ and $f_1$.
\end{lemma}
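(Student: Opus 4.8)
The plan is to reduce the two-chart transition map to two in-chart changes of base point, each governed by Lemma~\ref{lemma:HWB-change-of-basepoint}, glued at a common point of the connected overlap $\ucal_0\cap\ucal_1$. First I would pick a point $\zcal_2\in\ucal_0\cap\ucal_1$, a special Lagrangian immersion $f_2:L\to X$ representing it, and a simply-connected neighborhood of $\zcal_2$ inside $\ucal_0\cap\ucal_1$ on which both $R_{f_2}$ and $S_{f_2}$ are diffeomorphisms (Lemma~\ref{lemma:HWB-rel-and-dual-are-local-diffeo}), so that $(\,\cdot\,,\zcal_2,f_2)$ is a pointed coordinate chart compatible with both given charts. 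The hypothesis that $\ucal_0\cap\ucal_1$ is connected is exactly what will guarantee that a single affine formula, and a single mapping class $\psi_0\circ\psi_1^{-1}$, governs the transition map on the whole overlap rather than possibly different ones on different components.

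Next I would apply Lemma~\ref{lemma:HWB-change-of-basepoint} twice. Inside $\ucal_0$, the change of base point from $(\zcal_0,f_0)$ to $(\zcal_2,f_2)$ produces a diffeomorphism $\psi_0\in\Diff(L)$ with
\[ R_{f_2}(\zcal) = (\psi_0^{-1})^*\bigl(R_{f_0}(\zcal)-R_{f_0}(\zcal_2)\bigr), \qquad S_{f_2}(\zcal) = (\psi_0^{-1})^*\bigl(S_{f_0}(\zcal)-S_{f_0}(\zcal_2)\bigr), \]
and inside $\ucal_1$ the change from $(\zcal_1,f_1)$ to $(\zcal_2,f_2)$ produces $\psi_1\in\Diff(L)$ satisfying the analogous identities with subscript $0$ replaced by $1$. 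Eliminating $R_{f_2}$ (respectively $S_{f_2}$) between the two pairs of identities—that is, composing $R_{f_1}\circ R_{f_0}^{-1}=(R_{f_1}\circ R_{f_2}^{-1})\circ(R_{f_2}\circ R_{f_0}^{-1})$ and collapsing the pullbacks—yields an affine expression of exactly the form \eqref{eq:HWB-change-of-chart-main}, with linear part $(\psi_0\circ\psi_1^{-1})^*$ and the stated constant term; the precise placement of inverses is just bookkeeping in the conventions of Lemma~\ref{lemma:HWB-change-of-basepoint}. The $S$ computation is verbatim the same with $\Im(\Omega)$ in place of $\omega$, since Lemma~\ref{lemma:HWB-change-of-basepoint} handles both fluxes at once. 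This part is routine.

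Two independence facts then follow almost formally. To promote the local computation to the entire overlap, I would observe that the formula just derived shows the derivative of the intrinsic map $R_{f_1}\circ R_{f_0}^{-1}$ equals the constant linear map $(\psi_0\circ\psi_1^{-1})^*$ near the chosen point; since $\ucal_0\cap\ucal_1$ is connected this forces the derivative to be globally constant there, so $R_{f_1}\circ R_{f_0}^{-1}$ is a single affine map on the whole overlap, and likewise for $S$. Because an affine map determines its linear and constant parts uniquely, and the transition map carries no auxiliary data, both $(\psi_0\circ\psi_1^{-1})^*$ and the constant class $(\psi_0\circ\psi_1)^*R_{f_0}(\zcal_2)-\psi_1^*R_{f_1}(\zcal_2)$ (with its $S$-analogue) are thereby forced to be independent of the choices of $\zcal_2$ and $f_2$.

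The homotopy-class claim for $\psi_0\circ\psi_1^{-1}$ is finer, since distinct mapping classes can act identically on cohomology, and this is where I expect the main difficulty. I would realize $\psi_0\circ\psi_1^{-1}$ as the end-point discrepancy of a path of immersions running from $f_0$ to $f_1$ through the overlap: concatenate a lift in $\ucal_0$ from $f_0$ to $f_2\circ\psi_0$ with a suitably reparametrized reverse lift in $\ucal_1$ from $f_2\circ\psi_1$ to $f_1$, matched at $\zcal_2$. To see the class is insensitive to $\zcal_2$, I would join any two candidate common points $\zcal_2,\zcal_2'$ by a path $c$ inside the connected set $\ucal_0\cap\ucal_1$, choose $f_2'$ by lifting $c$, and check that appending this lift—reparametrized by $\psi_0$ on the $\ucal_0$ side and by $\psi_1$ on the $\ucal_1$ side—leaves both discrepancies unchanged up to homotopy, so $\psi_0\circ\psi_1^{-1}$ is preserved. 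The residual ambiguity of replacing $f_2$ by $f_2\circ\chi$ for a large mapping class $\chi$ is controlled by the triviality of monodromy of the immersion lift around contractible loops in each simply-connected chart, which is precisely the content of Proposition~\ref{prop:HWB-homotopy-class-independent}. Assembling these chart-wise homotopies into one coherent cross-chart argument, invoking Claim~\ref{claim:HWB-rel-flux-claim1} to lift the homotopies to families of special Lagrangian immersions, is the technical heart of the lemma.
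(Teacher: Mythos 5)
Your proposal is correct and its skeleton coincides with the paper's: pick a common point $\zcal_2\in\ucal_0\cap\ucal_1$ with a representing immersion $f_2$, apply Lemma~\ref{lemma:HWB-change-of-basepoint} once in each chart, and eliminate $R_{f_2}$ (resp.\ $S_{f_2}$) to obtain the affine formula \eqref{eq:HWB-change-of-chart-main}; for the homotopy class of $\psi_0\circ\psi_1^{-1}$ you, like the paper, concatenate the two lifts into a single path of immersions from $f_0$ to a reparametrization of $f_1$ and invoke Proposition~\ref{prop:HWB-homotopy-class-independent} together with the lifting Claim~\ref{claim:HWB-rel-flux-claim1}. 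Where you genuinely diverge is in the independence of the constant term (and of the linear part as a map on cohomology): the paper establishes this through the explicit computations of Claims~\ref{claim:HWB-change-of-chart-claim1.5} and~\ref{claim:HWB-change-of-chart-claim2}, the latter of which itself leans on the homotopy-class result, whereas you observe that $R_{f_1}\circ R_{f_0}^{-1}$ is an intrinsic map admitting an affine representation on the nonempty open set $R_{f_0}(\ucal_0\cap\ucal_1)$ for \emph{every} choice of $(\zcal_2,f_2)$, and two affine maps agreeing on a nonempty open set coincide. This is more elementary, and it also cleans up the logical dependencies: the formal uniqueness argument settles everything except the refinement from ``$(\psi_0\circ\psi_1^{-1})^*$ is determined'' to ``the homotopy class of $\psi_0\circ\psi_1^{-1}$ is determined,'' which you correctly isolate as the only step requiring the path-concatenation machinery. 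Two small remarks: your continuity argument for making the derivative globally constant is not needed, since Lemma~\ref{lemma:HWB-change-of-basepoint} already gives the affine formula on the entire overlap for each choice of base point; and for the homotopy-class step the paper's route is slightly tidier than the one you sketch --- rather than comparing two choices $\zcal_2,\zcal_2'$ along a connecting path, it applies Proposition~\ref{prop:HWB-homotopy-class-independent} once on $\ucal_0\cup\ucal_1$, which is simply connected by van Kampen precisely because $\ucal_0\cap\ucal_1$ is assumed connected. Your version would also work but requires the extra bookkeeping you allude to.
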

\begin{proof}
    \begin{figure}[thbp]
    \centering
    \begin{tikzpicture}
        \draw (0.7,0) circle (2.5);
        \draw (2.3,0) circle (2.5);
        \filldraw (-0.5,0) circle (0.03) node[below] {$\zcal_0$};
        \filldraw (3.5,0) circle (0.03) node[below] {$\zcal_1$};
        \filldraw (1.5,-0.5) circle (0.03) node[below] {$\zcal_2$};
        \filldraw (1.5,1) circle (0.03) node[above] {$\zcal_3$};
        \draw[thick] plot [smooth] coordinates {(-0.5,0) (0.5,-0.1) (1,-0.4) (1.5,-0.5)}; 
        \draw[thick] plot [smooth] coordinates {(3.5,0) (2.5,-0.1) (2,-0.4) (1.5,-0.5)}; 
        \draw[thick] plot [smooth] coordinates {(1.5,1) (1.4,0.5) (1.6,0) (1.5,-0.5)}; 
        \draw[thick] plot [smooth] coordinates {(-0.5,0) (0.5,0.3) (1,0.7) (1.5,1)}; 
        \draw[thick] plot [smooth] coordinates {(3.5,0) (2.5,0.3) (2,0.7) (1.5,1)}; 
        \draw (0,2.6) node {$\ucal_0$};
        \draw (3,2.6) node {$\ucal_1$};
        \draw[very thick, -stealth] (0.75,-0.23) -- (0.76,-0.235) node[below] {$\overline{f}_t$};
        \draw [very thick, stealth-] (2.25,-0.235) -- (2.26,-0.23) node[below] {$\widetilde{f}_t$};
        \draw[very thick, -stealth] (0.75,0.46) -- (0.76,0.47) node[above] {$\overline{h}_t$};
        \draw [very thick, stealth-] (2.25,0.47) -- (2.26,0.46) node[above] {$\widetilde{h}_t$};
    \end{tikzpicture}
    \caption{Change of base point from $(\zcal_0,f_0)$ to $(\zcal_2,f_2)$ on $\ucal_0$, and from $(\zcal_1,f_1)$ to $(\zcal_2,f_2)$ on $\ucal_1$.}
    \label{fig:HWB-change-of-chart}
    \end{figure}
    Pick $\zcal_2 \in \ucal_0 \cap \ucal_1$ and a special Lagrangian immersion $f_2:L \rightarrow X$ representing it. The strategy is to change base point from $(\zcal_0,f_0)$ to $(\zcal_2,f_2)$ on $\ucal_0$, and from $(\zcal_1,f_1)$ to $(\zcal_2,f_2)$ on $\ucal_1$ and then restrict both charts to $\ucal_0 \cap \ucal_1$. Since $\ucal_0\cap \ucal_1$ is connected, a generic element $\zcal_3 \in \ucal_0 \cap \ucal_1$ can be connected to $\zcal_2$ via a smooth path lying inside $\ucal_0 \cap \ucal_1$ (Figure~\ref{fig:HWB-change-of-chart}). By Lemma~\ref{lemma:HWB-change-of-basepoint}, there exists $\psi_0, \psi_1 \in \Diff(L)$, with the homotopy classes depending on $\ucal_0, \ucal_1, f_0, f_1, f_2$, such that
    \begin{equation}
        \label{eq:HWB-change-of-chart-eq1}
        \begin{aligned}
        R_{f_2}(\zcal_3) =&\ \psi_0^*\left( R_{f_0}(\zcal_3) - R_{f_0}(\zcal_2) \right) \\
        R_{f_2}(\zcal_3) =&\ \psi_1^*\left( R_{f_1}(\zcal_3) - R_{f_1}(\zcal_2) \right) \\
        S_{f_2}(\zcal_3) =&\ \psi_0^*\left( S_{f_0}(\zcal_3) - S_{f_0}(\zcal_2) \right) \\
        S_{f_2}(\zcal_3) =&\ \psi_1^*\left( S_{f_1}(\zcal_3) - S_{f_1}(\zcal_2) \right)
    \end{aligned} \qquad \forall\ \zcal_3 \in \ucal_0 \cap \ucal_1,
    \end{equation}
    \begin{equation}
        \label{eq:HWB-change-of-chart-eq1.5}
        \implies \quad \begin{aligned}
         R_{f_1}(\zcal_3) =&\ (\psi_1^{-1})^* \psi_0^* R_{f_0}(\zcal_3) - \psi_1^* \psi_0^* R_{f_0}(\zcal_2) + \psi_1^*R_{f_1}(\zcal_2), \\
         S_{f_1}(\zcal_3) =&\ (\psi_1^{-1})^* \psi_0^* S_{f_0}(\zcal_3) - \psi_1^* \psi_0^* S_{f_0}(\zcal_2) + \psi_1^*S_{f_1}(\zcal_2),
    \end{aligned}
    \end{equation}
    \begin{equation}
        \label{eq:HWB-change-of-chart-eq2}
        \implies \quad\begin{aligned}
         R_{f_1}\circ R_{f_0}^{-1}(\theta) =&\ (\psi_0 \circ \psi_1^{-1})^* \theta -(\psi_0\circ \psi_1)^* R_{f_0}(\zcal_2) + \psi_1^*R_{f_1}(\zcal_2), \\
          S_{f_1}\circ S_{f_0}^{-1}(\phi) =&\ (\psi_0 \circ \psi_1^{-1})^* \phi - (\psi_0\circ \psi_1)^* S_{f_0}(\zcal_2) + \psi_1^*S_{f_1}(\zcal_2).
    \end{aligned} 
    \end{equation}
    By \eqref{eq:HWB-change-of-chart-eq2} the transition maps $R_{f_1} \circ R_{f_0}^{-1}$ and $S_{f_1}\circ S_{f_0}^{-1}$ are affine, but a priori both the linear part $(\psi_0 \circ \psi_1^{-1})^*$ and the translation $(\psi_0\circ \psi_1)^* R_{f_0}(\zcal_2) - \psi_1^*R_{f_1}(\zcal_2)$ appear to depend on an arbitrarily chosen point $\zcal_2 \in \ucal_0 \cap \ucal_1$ and lifting $f_2:L \rightarrow X$. Claim \ref{claim:HWB-change-of-chart-claim1} proves $(\psi_0 \circ \psi_1^{-1})^*$ depends only of $f_0$ and $f_1$. Claim \ref{claim:HWB-change-of-chart-claim1.5} proves the cohomology classes $(\psi_0\circ \psi_1)^* R_{f_0}(\zcal_2) - \psi_1^*R_{f_1}(\zcal_2)$ and $(\psi_0\circ \psi_1)^* S_{f_0}(\zcal_2) - \psi_1^*S_{f_1}(\zcal_2)$ are independent of $\zcal_2$ and depend only on $f_0$ and $f_1$.
    
    \begin{claim}
    \label{claim:HWB-change-of-chart-claim1}
        The homotopy class of $\psi_0 \circ \psi_1^{-1}:L \rightarrow L$ depends only on $\ucal_0$, $\ucal_1$, $f_0$ and $f_1$, and in particular is independent of the chosen point $\zcal_2$ and immersion $f_2$.
    \end{claim}
    \begin{proof}
    First make a choice of $\psi_0$ and $\psi_1$ as follows. Consider smooth maps
    \begin{equation}
        \label{eq:HWB-change-of-chart-claim1-eq-0-1}
        \overline{f}, \widetilde{f}:[0,1]\times L \rightarrow X
    \end{equation}
    such that $\overline{f}_t := \overline{f}(t,\cdot)$ is a smooth path of special Lagrangian immersions in $\ucal_0$ from $\zcal_0$ to $\zcal_2$, and $\widetilde{f}_t := \widetilde{f}(t,\cdot)$ is a smooth path of special Lagrangian immersions in $\ucal_1$ from $\zcal_1$ to $\zcal_2$ (Figure~\ref{fig:HWB-change-of-chart}), satisfying
    \begin{equation}
        \label{eq:HWB-change-of-chart-claim1-eq-0-2}
        \overline{f}_0 = f_0 \qquad \& \qquad \widetilde{f}_0 = f_1.
    \end{equation}
        By assumption $\overline{f}_1$, $\widetilde{f}_1$ and $f_2$ all represent the same immersed special Lagrangian $\zcal_2$, and so there exists $\psi_0,\psi_1 \in \Diff(L)$ such that
        \begin{equation}
        \label{eq:HWB-change-of-chart-claim1-eq-0-3}
        \overline{f}_1 = f_2 \circ \psi_0^{-1} \qquad \& \qquad \widetilde{f}_1 = f_2 \circ \psi_1^{-1}. 
        \end{equation}
        By Lemma~\ref{lemma:HWB-change-of-basepoint}, \eqref{eq:HWB-change-of-chart-eq1} holds for this choice of $\psi_0, \psi_1$.
        
        Consider the following path of special Lagrangian immersions
        \begin{equation}
            \label{eq:HWB-change-of-chart-claim1-eq-0-4}
            \widehat{f}:[0,2] \times L \rightarrow X, \qquad  \widehat{f}(t,\cdot) := \begin{cases}
            \overline{f}_t \qquad &t\in [0,1], \\
            \widetilde{f}_{2-t}\circ \psi_1\circ \psi_0^{-1} \qquad &t\in [1,2].
        \end{cases}
        \end{equation}
        $\widehat{f}$ is a concatenation of $\overline{f}$ and the inverse path to $\widetilde{f}_t$, and is well defined since 
        \[\overline{f}_1 = f_2 \circ \psi_0^{-1} = f_2 \circ \psi_1^{-1} \circ \psi_1 \circ \psi_0^{-1} = \widetilde{f}_{2-1} \circ \psi_1 \circ \psi_0^{-1}. \]
        Furthermore $\widehat{f}$ joins the Lagrangians $\zcal_0$ and $\zcal_1$ and
        \begin{equation}
            \label{eq:HWB-change-of-chart-claim1-eq1}
            \&\qquad \widehat{f}_0 = f_0 , \qquad \widehat{f}_2 = f_1 \circ \psi_1 \circ \psi_0^{-1}. 
        \end{equation}
       Apply Proposition~\ref{prop:HWB-homotopy-class-independent} to the immersions $f_1$ and $f_2$, and the path $\widehat{f}_t$. By \eqref{eq:HWB-change-of-chart-claim1-eq1}, the homotopy class of $\psi_1 \circ \psi_0^{-1}$ depends only on $f_0$ and $f_1$.
    \end{proof}

    \begin{claim}
    \label{claim:HWB-change-of-chart-claim1.5}
        The maps $R_{f_0\circ \psi_0 \circ \psi_1^{-1}} - R_{f_1}$ and $S_{f_0\circ \psi_0 \circ \psi_1} - S_{f_1}$ are constant on $\ucal_0 \cap \ucal_1$.
    \end{claim}
    \begin{proof}
        Fix $\zcal_2$, $\overline{f}, \widetilde{f}, \psi_0, \psi_1$ and $\widehat{f}$ as in \eqref{eq:HWB-change-of-chart-claim1-eq-0-1}--\eqref{eq:HWB-change-of-chart-claim1-eq-0-4} (Figure~\ref{fig:HWB-change-of-chart}). Applying Corollary~\ref{corollary:HWB-change-of-basepoint-special-case} to the pair $\widehat{f}_0 = f_0 \circ \psi_0 \circ \psi_1^{-1}$ and $\widehat{f}_1 = f_2 \circ \psi_1^{-1}$ using the path $(\widehat{f}_t)_{t\in [0,1]}$ on $\ucal_0$,
        \begin{equation}
            \label{eq:HWB-change-of-chart-claim1.5-eq1}
            \begin{aligned}
            R_{f_0 \circ \psi_0 \circ \psi_1^{-1}}(\zcal_3) =&\ R_{f_2\circ \psi_1^{-1}}(\zcal_3) + R_{f_0 \circ \psi_0 \circ \psi_1^{-1}}(\zcal_2) \\
            S_{f_0 \circ \psi_0 \circ \psi_1^{-1}}(\zcal_3) =&\ S_{f_2\circ \psi_1^{-1}}(\zcal_3) + S_{f_0 \circ \psi_0 \circ \psi_1^{-1}} (\zcal_2),
        \end{aligned} \qquad \forall\ \zcal_3 \in \ucal_0.
        \end{equation}
        Similarly, applying Corollary~\ref{corollary:HWB-change-of-basepoint-special-case} to the pair $\widehat{f}_2 = f_1$ and $\widehat{f}_1 = f_2 \circ \psi_1^{-1}$ $\ucal_1$ using the path $(\widehat{f}_{2-t})_{t\in [0,1]}$ on $\ucal_1$,
        \begin{equation}
            \label{eq:HWB-change-of-chart-claim1.5-eq2}
        \begin{aligned}
            R_{f_2\circ \psi_1^{-1}}(\zcal_3) =&\ R_{f_1}(\zcal_3) + R_{f_1}(\zcal_2) \\
            S_{f_2\circ \psi_1^{-1}}(\zcal_3) =&\ S_{f_1}(\zcal_3) + S_{f_0 \circ \psi_0 \circ \psi_1^{-1}},
        \end{aligned} \qquad \forall\ \zcal_3 \in \ucal_1.
        \end{equation}
        Combining \eqref{eq:HWB-change-of-chart-claim1.5-eq1} and \eqref{eq:HWB-change-of-chart-claim1.5-eq2}, for all $\forall\ \zcal_3\in \ucal_0 \cap \ucal_1$,
        \begin{align*}
            R_{f_0 \circ \psi_0 \circ \psi_1^{-1}}(\zcal_3) - R_{f_1}(\zcal_3) =&\ R_{f_0 \circ \psi_0 \circ \psi_1^{-1}}(\zcal_2) - R_{f_1}(\zcal_2), \\
             S_{f_0 \circ \psi_0 \circ \psi_1^{-1}}(\zcal_3) - S_{f_1}(\zcal_3) =&\ S_{f_0 \circ \psi_0 \circ \psi_1^{-1}}(\zcal_2) - S_{f_1}(\zcal_2). \tag*{\qedhere}
        \end{align*}
    \end{proof}
    
    \begin{claim}
    \label{claim:HWB-change-of-chart-claim2}
        The cohomology classes $\psi_1^* \psi_0^* R_{f_0}(\zcal_2) - \psi_1^*R_{f_1}(\zcal_2) \in H^1(L, \partial L; \RR)$ and $\psi_1^* \psi_0^* S_{f_0}(\zcal_2) - \psi_1^*S_{f_1}(\zcal_2) \in H^{n-1}(L;\RR)$ depend only on $\ucal_0$, $\ucal_1$, $f_0$ and $f_1$, and in particular is independent of the chosen point $\zcal_2$ and immersion $f_2$.
    \end{claim}
    \begin{proof}
        Fix another point $\zcal_3 \in \ucal_0 \cap \ucal_1$ and free special Lagrangian immersion $f_3: L \rightarrow X$ representing $\zcal_3$. Consider the following construction analogous to \eqref{eq:HWB-change-of-chart-claim1-eq-0-1}--\eqref{eq:HWB-change-of-chart-claim1-eq-0-3}. Fix smooth maps
        \begin{equation}
            \label{eq:HWB-change-of-chart-claim2-eq-0-1}
            \overline{h}, \widetilde{h}:[0,1]\times L \rightarrow X
        \end{equation}
        such that $\overline{h}_t := \overline{h}(t,\cdot)$ is a smooth path of special Lagrangian immersions in $\ucal_0$ from $\zcal_0$ to $\zcal_3$, and $\widetilde{h}_t := \widetilde{h}(t,\cdot)$ is a smooth path of special Lagrangian immersions in $\ucal_1$ from $\zcal_1$ to $\zcal_3$ (Figure~\ref{fig:HWB-change-of-chart}), satisfying
        \begin{equation}
            \label{eq:HWB-change-of-chart-claim2-eq-0-2}
            \overline{h}_0 = f_0 \qquad \& \qquad \widetilde{h}_0 = f_1.
        \end{equation}
        By assumption $\overline{h}_1$, $\widetilde{h}_1$ and $f_3$ all represent the same immersed special Lagrangian $\zcal_3$, and so there exists $\eta_0,\eta_1 \in \Diff(L)$ such that
        \begin{equation}
        \label{eq:HWB-change-of-chart-claim2-eq-0-3}
        \overline{h}_1 = f_3 \circ \eta_0^{-1} \qquad \& \qquad \widetilde{h}_1 = f_3 \circ \eta_1^{-1}. 
        \end{equation}
        By Lemma~\ref{lemma:HWB-change-of-basepoint}
        \begin{equation}
        \label{eq:HWB-change-of-chart-claim2-eq1}
        \begin{aligned}
        R_{f_3}(\zcal_2) =&\ \eta_0^*\left( R_{f_0}(\zcal_2) - R_{f_0}(\zcal_3) \right), \\
        R_{f_3}(\zcal_2) =&\ \eta_1^*\left( R_{f_1}(\zcal_2) - R_{f_1}(\zcal_3) \right), \\
        S_{f_3}(\zcal_2) =&\ \eta_0^*\left( S_{f_0}(\zcal_2) - S_{f_0}(\zcal_3) \right), \\
        S_{f_3}(\zcal_2) =&\ \eta_1^*\left( S_{f_1}(\zcal_2) - S_{f_1}(\zcal_3) \right).
        \end{aligned}
        \end{equation}
        
        \begin{equation}
        \label{eq:HWB-change-of-chart-claim2-eq1.5}
        \implies \quad \begin{aligned}
         R_{f_1}(\zcal_2) =&\ (\eta_1^{-1})^* \eta_0^* R_{f_0}(\zcal_2) - \eta_1^* \eta_0^* R_{f_0}(\zcal_3) + \eta_1^*R_{f_1}(\zcal_3), \\
         S_{f_1}(\zcal_2) =&\ (\eta_1^{-1})^* \eta_0^* S_{f_0}(\zcal_2) - \psi_1^* \eta_0^* S_{f_0}(\zcal_3) + \eta_1^*S_{f_1}(\zcal_3).
        \end{aligned}
        \end{equation}
        \eqref{eq:HWB-change-of-chart-claim2-eq1} and \eqref{eq:HWB-change-of-chart-claim2-eq1.5} are analogous to \eqref{eq:HWB-change-of-chart-eq1} and \eqref{eq:HWB-change-of-chart-eq1.5}. But $(\psi_1^{-1})^* \psi_0^*$ and $(\eta_1^{-1})^* \eta_0^*$ are equal as maps on the cohomology of $L$ because $\psi_0\circ \psi_1^{-1}$ and $\eta_0\circ \eta_1^{-1}$ have the same homotopy class by Claim \ref{claim:HWB-change-of-chart-claim1}. Combining \eqref{eq:HWB-change-of-chart-eq1.5} and \eqref{eq:HWB-change-of-chart-claim2-eq1.5} with Lemma~\ref{lemma:HWB-change-of-immersion} and Claim~\ref{claim:HWB-change-of-chart-claim1.5},
        \begin{align*}
            \psi_1^* \psi_0^* R_{f_0}(\zcal_2) - \psi_1^*R_{f_1}(\zcal_2) =&\  (\psi_1^{-1})^* \psi_0^* R_{f_0}(\zcal_3) -  R_{f_1}(\zcal_3)  \\
            =&\ R_{f_0 \circ \psi_0 \circ \psi_1^{-1}}(\zcal_3) - R_{f_1}(\zcal_3) \\
            =&\ R_{f_0 \circ \psi_0 \circ \psi_1^{-1}}(\zcal_2) - R_{f_1}(\zcal_2) \\
            =&\ (\psi_0\circ \psi_1^{-1})^* R_{f_0}(\zcal_2) -  R_{f_1}(\zcal_2) \\
            =&\ (\eta_0\circ \eta_1^{-1})^* R_{f_0}(\zcal_2) -  R_{f_1}(\zcal_2) \\
            =&\ (\eta_1^{-1})^* \eta_0^* R_{f_0}(\zcal_2) -  R_{f_1}(\zcal_2),   \\
            =&\ \eta_1^* \eta_0^* R_{f_0}(\zcal_3) - \eta_1^*R_{f_1}(\zcal_3)
        \end{align*}
        \begin{align*}
            \&\quad \psi_1^* \psi_0^* S_{f_0}(\zcal_2) - \psi_1^*S_{f_1}(\zcal_2) =&\  (\psi_1^{-1})^* \psi_0^* S_{f_0}(\zcal_3) -  S_{f_1}(\zcal_3)  \\
            =&\ S_{f_0 \circ \psi_0 \circ \psi_1^{-1}}(\zcal_3) - S_{f_1}(\zcal_3) \\
            =&\ S_{f_0 \circ \psi_0 \circ \psi_1^{-1}}(\zcal_2) - S_{f_1}(\zcal_2) \\
            =&\ (\psi_0\circ \psi_1^{-1})^* S_{f_0}(\zcal_2) -  S_{f_1}(\zcal_2) \\
            =&\ (\eta_0\circ \eta_1^{-1})^* S_{f_0}(\zcal_2) -  S_{f_1}(\zcal_2) \\
            =&\ (\eta_1^{-1})^* \eta_0^* S_{f_0}(\zcal_2) -  S_{f_1}(\zcal_2),   \\
            =&\ \eta_1^* \eta_0^* S_{f_0}(\zcal_3) - \eta_1^*S_{f_1}(\zcal_3) \tag*{\qedhere}
        \end{align*}
        \end{proof}
    
    Lemma~\ref{lemma:HWB-change-of-chart} follows from \eqref{eq:HWB-change-of-chart-eq2} and Claim~\ref{claim:HWB-change-of-chart-claim1} and Claim \ref{claim:HWB-change-of-chart-claim2}.
\end{proof}

\begin{corollary}
\label{corollary:HWB-special-affine-manifold}
   Under the assumptions of Lemma~\ref{lemma:HWB-change-of-chart}, the transition maps $R_{f_1}\circ R_{f_0}^{-1}$ and $S_{f_1}\circ S_{f_0}^{-1}$ are volume preserving affine maps on $H^1(L, \partial L; \RR)$ and $H^1(L;\RR)$.
\end{corollary}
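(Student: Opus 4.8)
The plan is to read off the affine normal form of the transition maps from Lemma~\ref{lemma:HWB-change-of-chart} and then to verify the volume-preserving property using the integral structure on the cohomology of $L$. Writing $\psi := \psi_0\circ \psi_1^{-1} \in \Diff(L)$, equation~\eqref{eq:HWB-change-of-chart-main} already presents both transition maps as affine maps
\[ R_{f_1}\circ R_{f_0}^{-1}(\theta) = \psi^*\theta + c_R, \qquad S_{f_1}\circ S_{f_0}^{-1}(\phi) = \psi^*\phi + c_S, \]
whose translation parts $c_R\in H^1(L,\partial L;\RR)$ and $c_S\in H^{n-1}(L;\RR)$ are, by Lemma~\ref{lemma:HWB-change-of-chart}, well-defined constants depending only on $\ucal_0,\ucal_1,f_0,f_1$. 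Hence the linear part of each transition map is the pullback $\psi^*$ by one and the same diffeomorphism $\psi\in \Diff(L)$, acting on $H^1(L,\partial L;\RR)$ and on $H^{n-1}(L;\RR)$ respectively. Since every translation preserves every volume form, I would immediately reduce the corollary to showing that $\psi^*$ is volume preserving on each of these two spaces.

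Next I would fix the natural volume forms on the targets. The integral cohomology groups determine full-rank lattices
\[ \Gamma_R := \text{image}\big( H^1(L,\partial L;\ZZ) \to H^1(L,\partial L;\RR) \big), \qquad \Gamma_S := \text{image}\big( H^{n-1}(L;\ZZ) \to H^{n-1}(L;\RR) \big), \]
each free of rank $m$ (real coefficients kill torsion) and spanning its ambient vector space. Each lattice determines a translation-invariant volume form, unique up to sign, by declaring any $\ZZ$-basis to be unimodular; these fixed forms are the ones with respect to which volume preservation is to be understood.

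The crux, and the only step with real content, is that $\psi^*$ preserves these lattices. I would argue that because $\psi\in \Diff(L)$ preserves every boundary component by Definition~\ref{def:immersionsandimmersedforms}, it is a self-diffeomorphism of the pair $(L,\partial L)$ and therefore induces automorphisms of the integral groups $H^1(L,\partial L;\ZZ)$ and $H^{n-1}(L;\ZZ)$, with inverse induced by $\psi^{-1}$. Passing to the images in real cohomology, $\psi^*$ restricts to a lattice automorphism of $\Gamma_R$ and of $\Gamma_S$, so in any lattice basis it is represented by a matrix in $GL(m,\ZZ)$. As every element of $GL(m,\ZZ)$ has determinant $\pm 1$, the map $\psi^*$ preserves each lattice volume form up to sign and is thus volume preserving. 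I expect the main obstacle to be exactly this compatibility of $\psi^*$ with the \emph{relative} lattice $\Gamma_R$: it is essential here that $\Diff(L)$ consists of diffeomorphisms preserving the boundary decomposition, so that $\psi$ genuinely acts on the pair $(L,\partial L)$ and hence on $H^1(L,\partial L;\ZZ)$.

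Combining the two observations, $R_{f_1}\circ R_{f_0}^{-1}$ and $S_{f_1}\circ S_{f_0}^{-1}$ are affine maps whose linear parts have determinant $\pm 1$ with respect to the fixed lattice volume forms, and are therefore volume preserving, as claimed.
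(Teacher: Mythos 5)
Your proposal is correct and follows essentially the same route as the paper: both reduce to the linear part $(\psi_0\circ\psi_1^{-1})^*$ after discarding the translation, and both conclude it is volume preserving because it is an automorphism of the integral lattice (the free part of $H^1(L,\partial L;\ZZ)$, resp. $H^{n-1}(L;\ZZ)$) and hence has determinant $\pm 1$. Your version is slightly more careful than the paper's in spelling out why $\psi$ acts on the relative integral cohomology (boundary-component preservation) and in recording the sign $\pm 1$ rather than asserting determinant $1$.
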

\begin{proof}
    By Lemma~\ref{lemma:HWB-change-of-chart}, transition maps are a composition of a translation with the linear map $(\psi_0 \circ \psi_1^{-1})^*$ for some $\psi_0,\psi_1 \in \Diff(L)$. Since translations preserve volumes, it is sufficient to show that $(\psi_0 \circ \psi_1^{-1})^*$ is a volume preserving linear map on cohomology. Let $H^*_{\text{free}}(L;\ZZ)$ denote the free part of the integral cohomology $H^*(L;\ZZ)$. Then
    \[\begin{aligned}
        \det \left(\psi_0^*: H^1_{\text{free}}(L, \partial L; \ZZ) \rightarrow H^1_{\text{free}}(L, \partial L; \ZZ) \right) =&\ 1, \\
        \& \qquad \det \left(\psi_0^*: H^{n-1}_{\text{free}}(L; \ZZ) \rightarrow H^{n-1}_{\text{free}}(L; \ZZ) \right) =&\ 1,
    \end{aligned}\]
    because $\psi_0^*$ and $\psi_1^*$ are invertible linear maps on integral lattices. Conclude that
    \[\begin{aligned}
        \det \left(\psi_0^*: H^1(L, \partial L; \RR) \rightarrow H^1(L, \partial L; \RR) \right) =&\ 1, \\
        \& \qquad \det \left(\psi_0^*: H^{n-1}(L; \RR) \rightarrow H^{n-1}(L; \RR) \right) =&\ 1,.
    \end{aligned} \]
    Thus $(\psi_0\circ \psi_1^{-1})^*$ is a volume preserving linear map.
\end{proof}

\subsection{Proof of Theorem~\ref{thm:HWB-affine-structure-on-moduli-space}}
\label{subsec:HWB-proof-of-affine-structures-thm}

\begin{proof}[Proof of Theorem~\ref{thm:HWB-affine-structure-on-moduli-space}]
    By Lemma~\ref{lemma:HWB-rel-and-dual-are-local-diffeo} every point $\zcal_0 \in \scal\lcal(X,L; \Lambda_1,\dots, \Lambda_d)$ admits an open neighborhood $\ucal_{\zcal_0}$ such that for any special Lagrangian immersion $f_0:L \rightarrow X$ representing $\zcal_0$, $R_{f_0}$ and $S_{f_0}$ (Definition \ref{def:HWB-def-of-coordinate-chart}) are diffeomorphisms on the pointed coordinate chart $(U_{\zcal_0}, \zcal_0, f_0)$.
    
    Shrinking each chart $\ucal_{z\cal_0}$ if necessary, WLOG assume that pair wise intersections of these charts are connected. Then by Lemma~\ref{lemma:HWB-change-of-chart} and Corollary~\ref{corollary:HWB-special-affine-manifold}, the transition maps $R_{f_1} \circ R_{f_0}^{-1}$ and $S_{f_1} \circ S_{f_0}^{-1}$ are volume preserving affine maps for any pair of such charts $(\ucal_{\zcal_0}, \zcal_0, f_0)$ and $(\ucal_{\zcal_1}, \zcal_1, f_1)$.
\end{proof}

\subsection{Isometric embedding and Hessian metric}
\label{subsec:HWB-isometric-and-lagrangian-embedding}

This section is concerned with a proof of Proposition~\ref{prop:HWB-isometric-embedding-intro}. The statement is expanded and proved in Proposition~\ref{prop:HWB-isometric-embedding}. Fix a simply connected neighborhood $\ucal \subset \scal\lcal := \scal\lcal (X,L; \Lambda_1,\dots, \Lambda_d)$. Recall $R_{f_0}:\ucal \rightarrow H^1(L, \partial L; \RR) $ and $S_{f_0}: \ucal \rightarrow H^{n-1}(L;\RR)$ (Definition~\ref{def:HWB-def-of-coordinate-chart}). The goal is to study the image of the product $F_{f_0} := (R_{f_0}, S_{f_0})$.

Let $V : = H^1(L, \partial L;\RR)$, so that by the Poincar\'e--Lefschetz duality, $V^* \cong H^{n-1}(L; \RR)$ and the duality pairing on $V\times V^*$ is generated by the cup product, i.e.,
\[V\times V^* = H^1(L, \partial L;\RR) \times H^{n-1}(L; \RR) \rightarrow \RR,\qquad (\theta, \phi) = \int_L \theta \wedge \phi.\]

\begin{definition}
\label{def:geometryofVtimesV*}
A product $V\times V^*$ of a vector space and its dual has certain natural geometric structures. Let $\langle v,\alpha \rangle$ denote the dual pairing for $v\in V, \alpha \in V^*$.
\begin{enumerate}
    \item $V\times V^*$ has a constant coefficient indefinite bilinear symmetric form
    \begin{equation}
        \label{eq:metriconVtimesV*}
        B((v_1,\alpha_1), (v_2,\alpha_2)) := \frac{1}{2}\left( \langle v_1,\alpha_2 \rangle + \langle v_2,\alpha_1 \rangle \right).
    \end{equation}
    \item $V\times V^*$ has a constant coefficient symplectic form
    \begin{equation}
        \label{eq:symplecticformonVtimesV*}
        W((v_1,\alpha_1), (v_2,\alpha_2)) := \langle v_1,\alpha_2 \rangle - \langle v_2,\alpha_1 \rangle.
    \end{equation}
\end{enumerate}
\end{definition}
Fix a pair of Poincar\'e--Lefschetz dual bases
\begin{equation}
    \begin{aligned}
        \alpha_1,\dots, \alpha_m  \in&\ H^1(L,\partial L;\RR),\\
    \beta_1,\dots, \beta_m \in&\ H^{n-1}(L;\RR),
    \end{aligned} \qquad 
    \int_L \alpha_j \wedge \beta_k = \langle [\alpha_j], [\beta_k] \rangle = \delta_{j,k}.
\end{equation}
The bases define local coordinates 
 \[\begin{aligned}
     V\cong \RR^m,&\ \qquad \sum_{j=1}^m u_j \alpha_j \mapsto u = (u_1,\dots, u_m), \\
     V^*\cong \RR^m,&\ \qquad \sum_{j=1}^m v_j \beta_j \mapsto v = (v_1,\dots, v_m).
 \end{aligned}\]
 The pairing in these coordinates is the standard inner product. In these coordinates,
 \begin{equation}
     \label{eq:HWB-metric-sym-form-in-coordinate-VtimesV*}
     \begin{gathered}
         B = \frac{1}{2} \sum_{j=1}^m du_j \otimes dv_j + dv_j \otimes du_j  = \sum_{j=1}^m du_jdv_j \\
         W = \sum_{j=1}^m du_j \wedge dv_j.
     \end{gathered}
 \end{equation}

It turns out that the indefinite metric $B$ restricts to the $L^2$ Riemannian metric (Definition~\ref{def:HWB-L2-iner-product}) on the embedded manifold $(R_{f_0}, S_{f_0})(\ucal) \subset V\times V^*$ and that the symplectic form vanishes identically on it. Proposition~\ref{prop:HWB-isometric-embedding-intro} is restated in more detail as Proposition~\ref{prop:HWB-isometric-embedding}.

\begin{proposition}
\label{prop:HWB-isometric-embedding}
    Fix a simply connected neighborhood $\ucal \subset \scal\lcal$, an immersed special Lagrangian $\zcal_0 \in \ucal$, and an immersion $f_0:L \rightarrow X$ representing $\zcal_0$. Let $d^*$ and $\star_{f_0}$ denotes the co-exterior derivative and Hodge star operator corresponding to the pullback metric $f_0^*g$. The map $F_{f_0} := (R_{f_0}, S_{f_0}): \ucal \rightarrow V\times V^*$ is well defined by Definition~\ref{def:HWB-def-of-coordinate-chart}. By the Hodge decomposition for manifolds with boundary \cite[Chapter 5, Proposition 9.8]{Taylor2011}, 
    \[\begin{aligned}
        V =&\ H^1(L, \partial L;\RR) \cong \hcal^1_D(L) := \{\alpha \in A^1(L): d\alpha = 0,\ d^*\alpha = 0, \alpha|_{\partial L} \equiv 0\}, \\
        V^* =&\ H^{n-1}(L;\RR) \cong \hcal^{n-1}_N(L) := \{\beta\in A^{n-1}(L): d\beta = 0,\ d^*\beta = 0, (\star_{f_0} \beta)|_{\partial L} \equiv 0\}.
    \end{aligned}\]
    Then the following hold
    \begin{enumerate}[label={(\roman*)}]
        \item The linear map $dF_{f_0}|_{\zcal_0}$ has image inside the diagonal subspace
        \[\left\{(\theta, \star_{f_0} \theta): \theta \in  \hcal_D^1(L) \right\} \subset V\times V^*.\]
        \item $F_{f_0}^*B$ is the standard $L^2$ metric on the moduli space of special Lagrangians, and $F_{f_0}^* W \equiv 0$ (Definition~\ref{def:geometryofVtimesV*}).
        \item $F_{f_0}^* B$ is a Hessian metric.
    \end{enumerate}
\end{proposition}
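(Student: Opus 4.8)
The plan is to establish the three assertions in order, obtaining (ii) and (iii) as consequences of the single derivative computation in (i) together with the base-point invariance already available.

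For (i) I would simply read off the derivative from Lemma~\ref{lemma:HWB-rel-and-dual-are-local-diffeo}. Fixing the basis $\theta_1,\dots,\theta_m$ of $\hcal^1_D(L)$ used there, equations \eqref{eq:HWB-der-of-rel-flux} and \eqref{eq:HWB-der-of-dual-flux} give $dR_{f_0}|_{\zcal_0}(\partial_{t_i}) = \theta_i$ and $dS_{f_0}|_{\zcal_0}(\partial_{t_i}) = \star_{f_0}\theta_i$, whence $dF_{f_0}|_{\zcal_0}(\partial_{t_i}) = (\theta_i,\star_{f_0}\theta_i)$ lies in the claimed diagonal. That $\star_{f_0}\theta_i \in \hcal^{n-1}_N(L)$, so that the target is correctly described, is exactly the Hodge-duality content of Proposition~\ref{prop:HWB-tangentspacetospeciallagrangianswithboundary-1}.

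For (ii) the essential step is to promote this base-point computation to an arbitrary $\zcal_1 \in \ucal$. Choosing a path of special Lagrangian immersions in $\ucal$ from $\zcal_0$ to a representative $f_1$ of $\zcal_1$, Corollary~\ref{corollary:HWB-change-of-basepoint-special-case} shows that $F_{f_0}$ and $F_{f_1}$ differ by a constant, so $dF_{f_0}|_{\zcal_1} = dF_{f_1}|_{\zcal_1}$; applying (i) at the new base point, this derivative maps into $\{(\theta,\star_{f_1}\theta):\theta\in\hcal^1_D(L)\}$. Thus for $v,w\in T_{\zcal_1}\ucal$ with $dF_{f_0}|_{\zcal_1}(v)=(\theta,\star_{f_1}\theta)$ and $dF_{f_0}|_{\zcal_1}(w)=(\eta,\star_{f_1}\eta)$, the coordinate formulas \eqref{eq:HWB-metric-sym-form-in-coordinate-VtimesV*} and the identity $\langle\theta,\star_{f_1}\eta\rangle=\int_L\theta\wedge\star_{f_1}\eta=\langle\theta,\eta\rangle_{L^2(f_1^*g)}$ yield
\[
F_{f_0}^*B|_{\zcal_1}(v,w) = \tfrac12\left(\langle\theta,\star_{f_1}\eta\rangle+\langle\eta,\star_{f_1}\theta\rangle\right) = \langle\theta,\eta\rangle_{L^2(f_1^*g)},
\]
which is precisely the $L^2$ metric of Definition~\ref{def:HWB-L2-iner-product} at $\zcal_1$ (well defined by Proposition~\ref{prop:HWB-L2-inner-prod-well-defined}), while the antisymmetric combination gives $F_{f_0}^*W|_{\zcal_1}(v,w) = \langle\theta,\eta\rangle_{L^2}-\langle\eta,\theta\rangle_{L^2}=0$ by symmetry of the inner product. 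Since $\zcal_1$ is arbitrary, (ii) follows.

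For (iii) I would run Hitchin's generating-function argument. As $R_{f_0}$ is a diffeomorphism onto $R_{f_0}(\ucal)\subset V$ (Lemma~\ref{lemma:HWB-rel-and-dual-are-local-diffeo}), the image $F_{f_0}(\ucal)$ is the graph over $V\cong\RR^m$ of the section $u\mapsto v(u):=S_{f_0}\circ R_{f_0}^{-1}(u)$ of $T^*V = V\times V^*$. By (ii) this graph is Lagrangian for $W=\sum_j du_j\wedge dv_j$, which is equivalent to closedness of the $1$-form $\sum_j v_j(u)\,du_j$; after shrinking $\ucal$ it is exact, $v_j=\partial\Phi/\partial u_j$ for some $\Phi$ on $R_{f_0}(\ucal)$. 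Pulling back $B=\sum_j du_j\,dv_j$ to the graph via $dv_j=\sum_k(\partial^2\Phi/\partial u_j\partial u_k)\,du_k$ gives
\[
F_{f_0}^*B = \sum_{j,k}\frac{\partial^2\Phi}{\partial u_j\,\partial u_k}\,du_j\,du_k = \hess_u\,\Phi
\]
in the affine coordinates $u=R_{f_0}$ of the atlas of Theorem~\ref{thm:HWB-affine-structure-on-moduli-space}, exhibiting the $L^2$ metric as a Hessian metric; consistency across charts is automatic because the $R$-transition maps are affine. The main obstacle is the globalization in (ii): the derivative identities of (i) hold a priori only at the distinguished base point, and the argument hinges on Corollary~\ref{corollary:HWB-change-of-basepoint-special-case} to transport them—along with the correct Hodge star $\star_{f_1}$—to a nearby point while leaving $dF_{f_0}$ unchanged. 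Once the pullbacks of $B$ and $W$ are pinned down at every point, (ii) and (iii) reduce to elementary symplectic linear algebra.
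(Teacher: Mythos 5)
Your proposal is correct and follows essentially the same route as the paper: compute the derivatives of $R_{f_0}$ and $S_{f_0}$ to identify the image of $dF_{f_0}$ with the diagonal $\{(\theta,\star\theta)\}$ via Lemma~\ref{prop:HWB-tangentspacetospeciallagrangianswithboundary-1}, deduce (ii) by the linear-algebra identities $\int_L\theta\wedge\star\eta=\langle\theta,\eta\rangle_{L^2}$, and obtain (iii) from the Lagrangian-graph/generating-function argument in the affine coordinates $u=R_{f_0}$. The one place you go beyond the paper is the globalization step in (ii): the paper's proof carries out the computation of $F_{f_0}^*B$ and $F_{f_0}^*W$ only on the tangent plane at the base point $\zcal_0$ (via the two-parameter family of Corollary~\ref{corollary:HWB-lifted-local-coordinate-chart}) and leaves implicit that the base point is arbitrary, whereas you explicitly use Corollary~\ref{corollary:HWB-change-of-basepoint-special-case} to show $F_{f_0}$ and $F_{f_1}$ differ by a translation, so that $dF_{f_0}|_{\zcal_1}=dF_{f_1}|_{\zcal_1}$ lands in the diagonal for the Hodge star $\star_{f_1}$ at the new point; this is a genuine (if small) tightening of the argument rather than a different proof.
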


\begin{proof}~
\begin{enumerate}[label={(\roman*)}]
    \item Let $f:(-\epsilon, \epsilon) \times L \rightarrow X$ be a smooth function such that $f_s := f(s,\cdot)$ is a smooth path of special Lagrangian immersions with $f_0$ equal to the given immersion. The tangent to $f_s$ at $s=0$ is an arbitrary tangent vector at $\zcal_0 \in \ucal$. The desired result now follows from \eqref{eq:HWB-app2-flux-along-path-der-formula}.

    \begin{claim}
    \label{claim:HWB-app2-main-claim}
    \begin{equation}
        \label{eq:HWB-app2-flux-along-path-formula}
        \begin{aligned}
        R_{f_0}([f_{u }]) =&\  \int_0^u \left[ f_{z }^* i_{\frac{df_{z }}{dz}} \omega \right] dz, \\
        S_{f_0}([f_{u }]) =&\ \int_0^u \left[ f_{z }^* i_{\frac{df_{z }}{dz}} \Im(\Omega)\right] dz,
    \end{aligned} \qquad \forall u\in (-\epsilon,\epsilon)
    \end{equation}
    Consequently
    \begin{equation}
        \label{eq:HWB-app2-flux-along-path-der-formula}
        \begin{aligned}
            \left. \frac{d }{d u}\right|_{u=0} R_{f_0}([f_u]) =&\ f_0^* \left(i_{\left. \frac{df_{s}}{ds} \right|_{s=0} } \omega \right) \\
            \& \quad \left. \frac{d }{d u}\right|_{u=0} S_{f_0}([f_u]) =&\ \star_{f_0} f_0^* \left(i_{\left. \frac{df_{s}}{ds} \right|_{s=0} } \omega \right).
        \end{aligned}
    \end{equation}
    \[\]
    \end{claim}
    \begin{proof}
        By Definition~\ref{def:HWB-def-of-coordinate-chart}, $R_{f_0}(u ) = RF((f_{s })_{s=0}^u)$ and $S_{f_0}(u ) = SF((f_{s })_{s=0}^u)$ computed along the path of Lagrangians $(f_{s })_{s=0}^u$ starting at $f_0$ and ending at $f_{u }$. Then by \eqref{eq:HWB-intro-rel-flux} and \eqref{eq:HWB-intro-dual-flux},
        \begin{align*}
            R_{f_0}(u ) =&\  RF((f_{s })_{s=0}^u) = \int_0^1  \left[ f_{us }^* i_{\frac{df_{us }}{ds}} \omega  \right] ds \\
            =&\ \int_0^u \left[ f_{z }^* i_{u\frac{df_{z }}{dz}} \omega  \right] \frac{dz}{u} \qquad (\text{with }z = su) \\
            =&\ \int_0^u \left[ f_{z }^* i_{\frac{df_{z }}{dz}} \omega \right] dz. \\
            S_{f_0}(u ) =&\  SF((f_{s })_{s=0}^u) = \int_0^1  \left[ f_{us }^* i_{\frac{df_{us }}{ds}} \Im(\Omega)  \right] ds \\
            =&\ \int_0^u \left[ f_{z }^* i_{u\frac{df_{z }}{dz}} \Im(\Omega)  \right] \frac{dz}{u} \qquad (\text{with }z = su) \\
            =&\ \int_0^u \left[ f_{z }^* i_{\frac{df_{z }}{dz}} \Im(\Omega) \right] dz.
        \end{align*}
    Differentiating \eqref{eq:HWB-app2-flux-along-path-formula} at $u=0$,
    \[ \begin{aligned}
        \left. \frac{d R_{f_0}}{d u}\right|_{u=0} =&\  \left. \frac{d}{du} \int_0^u \left[ f_{z}^* i_{\frac{df_{z}}{dz}} \omega \right] dz \right|_{u=0}  =\left. \left[ f_{u}^* i_{\frac{df_{u}}{du}} \omega\right] \right|_{u=0}  \\
        =&\ \left[ f_{0}^* i_{\left. \frac{df_{u}}{du} \right|_{u=0}} \omega \right].
        \end{aligned}\]
        Similarly differentiating $S_{f_0}$,
        \[ \begin{aligned}
        \left. \frac{d S_{f_0}}{d u}\right|_{u=0} =&\  \left. \frac{d}{du} \int_0^u \left[ f_{z}^* i_{\frac{df_{z}}{dz}} \Im(\Omega) \right] dz \right|_{u=0}  = \left. \left[ f_{u}^* i_{\frac{df_{u}}{du}} \Im(\Omega) \right] \right|_{u=0}  \\
        =&\ \left[ f_{0}^* i_{\left. \frac{df_{u}}{du} \right|_{u=0}} \Im(\Omega) \right].
        \end{aligned}\]
        By Proposition~\ref{prop:HWB-tangentspacetospeciallagrangianswithboundary-1}
        \[ \left. \frac{d S_{f_0}}{d u}\right|_{u=0} = \left[ f_{0}^* i_{\left. \frac{df_{u}}{du} \right|_{u=0}} \Im(\Omega) \right] = \star_{f_0} \left[ f_{0}^* i_{\left. \frac{df_{u}}{du} \right|_{u=0}} \omega \right]. \tag*{\qedhere}\]
    \end{proof}

    \item Fix a pair of harmonic 1-forms vanishing with Dirichlet boundary conditions $\theta_1, \theta_2 \in Z_D^1\cap cZ^{n-1}(L)$ representing linearly independent classes in $H^1(L,\partial L; \RR)$. Compute $F_{f_0}^*B$. By Corollary~\ref{corollary:HWB-lifted-local-coordinate-chart}, there is a two parameter family of special Lagrangian immersions with boundary conditions $\Lambda_1,\dots, \Lambda_d$
    \[ f_t: L \rightarrow X, \qquad t = (t_1,t_2)\in (-\epsilon, \epsilon)^2,\]
    such that $f_{(0,0)} = f_0$ is the given immersion, and for $e_1 = (1,0)$ and $e_2 = (0,1)$,
    \begin{equation}
        \label{eq:HWB-app2-generic-tangents-1}
        \theta_1 = f_0^* \left(\left.  i_{\frac{df_{se_1}}{ds}} \omega  \right|_{s=0} \right) \quad \& \quad \theta_2 = f_0^* \left(\left.  i_{\frac{df_{se_2}}{ds} } \omega  \right|_{s=0}\right) . 
    \end{equation}
    Note that by Proposition~\ref{prop:HWB-tangentspacetospeciallagrangianswithboundary-1}
    \begin{equation}
        \label{eq:HWB-app2-generic-tangents-2}
        \star_{f_0} \theta_1 = f_0^* \left( \left.  i_{\frac{d f_{se_1}}{ds} } \Im(\Omega) \right|_{s=0}  \right)  \quad \& \quad \star_{f_0} \theta_2 = f_0^* \left(\left. i_{ \frac{d f_{se_2}}{ds} } \Im(\Omega) \right|_{s=0}\right).
    \end{equation}
    Compute the quadratic forms $F_{f_0}^*B$ and $F_{f_0}^*W$ along the tangent plane at $f_0$ to the generic surface $[f_t]_{t}$ inside $\scal\lcal$. By Claim~\ref{claim:HWB-app2-main-claim}, \eqref{eq:metriconVtimesV*}, \eqref{eq:HWB-app2-generic-tangents-1} and \eqref{eq:HWB-app2-generic-tangents-2}
    \begin{align}
        & F_{f_0}^*B\left( \frac{\partial }{\partial t_1} [f_t],  \frac{\partial }{\partial t_2} [f_t] \right) \\
        &\qquad = B\left( \frac{\partial }{\partial t_1} \Big(R_{f_0}([f_t]), S_{f_0}([f_t]) \Big),\ \frac{\partial }{\partial t_2} \Big(R_{f_0}([f_t]), S_{f_0}([f_t])\Big) \right) \nonumber \\
        &\qquad = B\left((\theta_1, \star_{f_0} \theta_1), (\theta_2, \star_{f_0} \theta_2)\right) \nonumber \\
        &\qquad = \frac{1}{2} \left( (\theta_1, \star_{f_0} \theta_2)  + (\theta_2, \star_{f_0} \theta_1) \right) \\
        &\qquad =  \frac{1}{2} \left( \int_L \theta_1 \wedge \star_{f_0} \theta_2  + \int_L \theta_2 \wedge \star_{f_0} \theta_1 \right)  \nonumber \\
        &\qquad = \int_L \theta_1 \wedge \star_{f_0} \theta_2 = \int_L f_0^* g(\theta_1, \theta_2) d\vol_{f_0^*g}, \label{eq:HWB-app2-eq3}
    \end{align}
    where the last line follows from the definition of the Hodge star operator. Combining Definition~\ref{def:HWB-L2-iner-product}, \eqref{eq:HWB-app2-generic-tangents-1} and \eqref{eq:HWB-app2-eq3} conclude that $F_{f_0}^*B$ is equivalent to the $L^2$ metric on $\scal\lcal$.
    
    Similarly by  Claim~\ref{claim:HWB-app2-main-claim}, \eqref{eq:symplecticformonVtimesV*}, \eqref{eq:HWB-app2-generic-tangents-1} and \eqref{eq:HWB-app2-generic-tangents-2},
    \begin{align*}
        & F_{f_0}^*W\left( \frac{\partial }{\partial t_1} [f_t],  \frac{\partial }{\partial t_2} [f_t] \right) \\
        &\qquad = W\left( \frac{\partial }{\partial t_1} \Big(R_{f_0}([f_t]), S_{f_0}([f_t]) \Big),\ \frac{\partial }{\partial t_2} \Big(R_{f_0}([f_t]), S_{f_0}([f_t])\Big) \right) \\
        &\qquad =  W\left((\theta_1, \star_{f_0} \theta_1), (\theta_2, \star_{f_0} \theta_2)\right)  \\
        &\qquad =  (\theta_1, \star_{f_0} \theta_2) - (\theta_2, \star_{f_0} \theta_1) \\
        &\qquad =  \int_L \theta_1 \wedge \star_{f_0} \theta_2 - \int_L \theta_2 \wedge \star_{f_0} \theta_1 \\
        &\qquad =  \int_L f_0^* g(\theta_1, \theta_2) d\vol_{f_0^*g} - \int_L f_0^* g(\theta_2, \theta_1) d\vol_{f_0^*g} = 0.
    \end{align*}
    Conclude that $F_{f_0}^*W \equiv 0$.

    \item Use the local coordinates \eqref{eq:HWB-metric-sym-form-in-coordinate-VtimesV*}. In these coordinates, $F_{f_0}$ is represented by $u = R_{f_0}$ and $v = S_{f_0}$. Note that $R_{f_0}$ and $S_{f_0}$ are both invertible on $\ucal$ and so $F_{f_0}^*du \neq 0$ and $F_{f_0}^* dv \neq =0$. By (ii) and \eqref{eq:HWB-metric-sym-form-in-coordinate-VtimesV*}, $F_{f_0}^* W = F_{f_0}^* (du\wedge dv) = 0$ and $F_{f_0}^*du \neq 0$. Thus the image $F_{f_0}(\ucal)$ is a gradient graph over its projection to the first component, i.e., there exists $h:\RR^m \rightarrow \RR$ such that
    \[v = \nabla h(u) \qquad \forall\ u \in R_{f_0}(\ucal).\]
    Then by \eqref{eq:HWB-metric-sym-form-in-coordinate-VtimesV*}, $F_{f_0}^*B$ is a Hessian metric in the affine coordinate chart $u = R_{f_0}$,
    \[ F_{f_0}^*B = F_{f_0}^*\left( \sum_{j=1}^m du_jdv_j \right) = \frac{\partial^2 h}{\partial u_i \partial u_j} du_idu_j. \tag*{\qedhere}\]
    \end{enumerate}
\end{proof}

\section{Generalizations}
\label{sec:HWB-generalizations}

This section discusses generalizations of Proposition~\ref{prop:HWB-tangentspacetospeciallagrangianswithboundary-1}, Proposition~\ref{prop:HWB-tangentspacetospeciallagrangianswithboundary-1}, and Theorem~\ref{thm:HWB-affine-structure-on-moduli-space} beyond Ricci flat Calabi--Yau manifolds $(X,\omega, J ,\Omega)$ from Definition~\ref{def:HWB-calabi-Yau-manifold}.

\begin{definition}
    An \textit{almost Calabi--Yau manifold} is a K\"ahler manifold $(X, \omega, J)$ with a trivial canonical bundle $K_X$. Since $K_X = \Lambda^{(n,0)}T^*X$ is trivial, $X$ admits a a nowhere vanishing $(n,0)$-form $\Omega$. There exists a smooth function $\rho:X \rightarrow (0,\infty)$ such that
    \begin{equation}
        \label{eq:HWB-almost-Calabi-Yau-equation}
        (-1)^{\frac{n(n-1)}{2}} \left( \frac{\sqrt{-1}}{2}\right)^n \Omega \wedge \overline{\Omega} = \rho^2 \frac{1}{n!} \omega^n.
    \end{equation}
    Compare \eqref{eq:HWB-almost-Calabi-Yau-equation} with \eqref{eq:HWB-calabi-Yau-equation}. If $\omega$ is in addition Ricci flat, then $\rho\equiv 1$ and $(X,\omega, J, \Omega)$ would be a Calabi--Yau manifold.
\end{definition}

On an almost Calabi--Yau manifold, special Lagrangians are not minimal surfaces for the Riemannian metric $g$. It turns out that special Lagrangians are minimal surfaces for the conformal metric
\begin{equation}
    \label{eq:HWB-almost-CY-conformal-metric}
    \widetilde{g} = \rho^{\frac{-2}{n}} g.
\end{equation}

In the prequel, it was noted that Proposition~\ref{prop:HWB-tangentspacetospeciallagrangianswithboundary-1} and Theorem~\ref{thm:HWB-tangentspacetospeciallagrangianswithboundary-2} generalize to almost Calabi--Yau manifolds when the pullback metric $f_0^*g$ is replaced by the conformally equivalent on $f_0^* \widetilde{g}$ \cite[{\S}5]{vasanth-hitchin-1}. Similarly Theorem~\ref{thm:HWB-rel-flux-well-defined}, Theorem~\ref{thm:HWB-dual-flux-well-defined} and Theorem~\ref{thm:HWB-affine-structure-on-moduli-space} all generalize to almost Calabi--Yau manifolds in a straight forward manner after replacing the pullback Riemannian metric $f^*g$ by the conformal metric $f^* \widetilde{g}$ defined by \eqref{eq:HWB-almost-CY-conformal-metric}. The proofs follow an identical structure.

\printbibliography

\textsc{University of Maryland, College Park, Maryland}

\href{mailto:pvasanth@umd.edu}{pvasanth@umd.edu}

\end{document}